
\documentclass{gOMS2e}

\usepackage{epstopdf}
\usepackage{subfigure}
 \usepackage{hyperref}
\hypersetup{colorlinks=true}

\theoremstyle{plain}
\newtheorem{theorem}{Theorem}[section]

\newtheorem{lemma}[theorem]{Lemma}

\numberwithin{equation}{section}

 \usepackage{algorithm}
\usepackage{algpseudocode}
\usepackage{wrapfig}
\usepackage{lscape}

\theoremstyle{definition}

\theoremstyle{remark}

\newcommand{\defeq}{\stackrel{\rm def}{=}}

\usepackage{xargs}                      
\usepackage[pdftex,dvipsnames]{xcolor}  
\usepackage[colorinlistoftodos,prependcaption,textsize=tiny]{todonotes}
\newcommandx{\unsure}[2][1=]{\todo[linecolor=red,backgroundcolor=red!25,bordercolor=red,#1]{#2}}
\newcommandx{\change}[2][1=]{\todo[linecolor=blue,backgroundcolor=blue!25,bordercolor=blue,#1]{#2}}
\newcommandx{\info}[2][1=]{\todo[linecolor=OliveGreen,backgroundcolor=OliveGreen!25,bordercolor=OliveGreen,#1]{#2}}
\newcommandx{\improvement}[2][1=]{\todo[linecolor=Plum,backgroundcolor=Plum!25,bordercolor=Plum,#1]{#2}}
\newcommandx{\thiswillnotshow}[2][1=]{\todo[disable,#1]{#2}}

\usepackage{booktabs}
\usepackage{array}
\usepackage{arydshln}
\setlength\dashlinedash{0.2pt}
\setlength\dashlinegap{1.5pt}
\setlength\arrayrulewidth{0.3pt}


\widowpenalty500
\clubpenalty500
\clubpenalty=9996
\exhyphenpenalty=50 
\brokenpenalty=4991
\predisplaypenalty=10000
\postdisplaypenalty=1549
\displaywidowpenalty=1602
\floatingpenalty = 20000


\begin{document}



\title{A Robust Multi-Batch L-BFGS Method for Machine Learning\footnote{This work substantially extends \cite{Berahas2016} published at the Neural Information Processing Systems (NeurIPS) conference in 2016.}}


\author{
\name{Albert S. Berahas\textsuperscript{a}$^{\ast}$\thanks{$^\ast$Corresponding author. Email: \url{albertberahas@u.northwestern.edu}}
and Martin Tak\'a\v c\textsuperscript{a}}
\affil{\textsuperscript{a}Department of Industrial and Systems Engineering, Lehigh University, Bethlehem, PA, USA}
\received{v5.0 released July 2015}
}

\maketitle

\begin{abstract}
This paper describes an implementation of the L-BFGS method designed to deal with two adversarial situations. The first occurs in distributed computing environments where some of the computational nodes devoted to the evaluation of the function and gradient are unable to return results on time. A similar challenge occurs in a multi-batch approach in which the data points used to compute function and gradients are purposely changed at each iteration to accelerate the learning process. Difficulties arise because L-BFGS employs gradient differences to update the Hessian approximations, and when these gradients are computed using different data points the updating process can be unstable. This paper shows how to perform stable quasi-Newton updating in the multi-batch setting, studies the convergence properties for both convex and nonconvex functions, and illustrates the behavior of the algorithm in a distributed computing platform on binary classification logistic regression and neural network training problems that arise in machine learning.
\end{abstract}

\begin{keywords}
L-BFGS, multi-batch, fault-tolerant, sampling, consistency, overlap.
\end{keywords}

\begin{classcode}90C30; 90C06; 90C53. \end{classcode}

\section{Introduction}
\label{sec:intro}
\setcounter{equation}{0}

It is common in machine learning to encounter optimization problems involving tens of millions of training examples and millions of variables.  To deal with the demands of time, storage and processing power imposed by such applications, high performance implementations of stochastic gradient and batch quasi-Newton methods have been developed; see e.g.,  \citep{tsitsiklis1986distributed,
recht2011hogwild,agarwal2014reliable,
dean2012large,zinkevich2010parallelized,
das2016distributed,berahas2019quasi}. In this paper we study a batch approach based on the L-BFGS method \citep{nocedal1980updating,LiuNocedal89} that strives to reach the right balance between efficient learning and productive parallelism.

At present, due to its fast learning properties and low per-iteration cost, the preferred method for very large scale applications is the stochastic gradient  (SG) method \citep{robbins1951stochastic,
bottou2003large}, and its variance-reduced and accelerated variants \citep{Schmidt2016,johnson2013accelerating,
defazio2014saga,nguyen2017sarahicml,
konevcny2016mini,
konevcny2017semi,
he2015dual,
nguyen2017stochastic,
lin2015universal}. These methods are implemented either in an asynchronous manner (e.g., using a parameter server in a distributed setting) or following a synchronous mini-batch approach that exploits parallelism in the gradient evaluations \citep{bertsekas1989parallel,recht2011hogwild,Goodfellow-et-al-2016,reddi2015variance,leblond2016asaga,takac2013mini}. A drawback of the asynchronous approach
is that it cannot use large batches, as this would cause updates to become too dense and compromise
the stability and scalability of the method \citep{recht2011hogwild,mania2015perturbed}. As a result, the algorithm spends more time in communication as compared to computation. On the other hand, using a synchronous mini-batch approach one can achieve a near-linear decrease in the number of SG iterations as the mini-batch size is increased, up to a certain point after which the increase in computation is not offset by the faster convergence \citep{takac2013mini}.

An alternative to SG-type methods are batch methods, such as L-BFGS \citep{nocedal1980updating}, because they parallelize well and are able to achieve high training accuracy. Batch methods allow for more computation per node, so as to achieve a better balance with the communication costs \citep{zhang2015disco,berahas2019quasi}; however, batch methods are not as efficient learning algorithms as SG methods in a sequential setting \citep{bousquet2008tradeoffs,hardt2016train}. To benefit from both types of methods, some high performance machine learning systems implement both types of methods  \citep{agarwal2014reliable,dean2012large}, and algorithms that transition from the stochastic to the batch regime \citep{friedlander2012hybrid,byrd2012sample,bollapragada2018adaptive,bollapragada2018progressive} have also received attention recently.


The goal of this paper is to propose a single method that selects a \emph{sizeable} subset (batch) of the training data to compute a step and changes this batch at every iteration to improve the learning abilities of the method. In order to differentiate it from the mini-batch approach used in conjunction with the SG method, which employs a very small subset of the training data, we call this the \emph{multi-batch} approach. In this regime it is natural to employ a quasi-Newton method, as incorporating second-order information imposes  little computational overhead and improves the stability and speed of the method. However, the multi-batch approach can cause difficulties to quasi-Newton methods as these methods employ gradient differences to update the Hessian approximations.


More specifically, in this paper we study how to design a \emph{robust multi-batch} implementation of the limited-memory version of the classical BFGS method \citep{broyden1967quasi,fletcher1970new,goldfarb1970family,shanno1970conditioning}---which we call the \emph{multi-batch L-BFGS} method---in the presence of two adverse situations \citep{Berahas2016,pei2019single,pei2019two,karakus2017straggler,karakus2018redundancy}. The first occurs in parallel implementations when some of the computational nodes devoted to the evaluation of the function and gradient are unable to return results on time, i.e., in the presence of \emph{faults}.  This amounts to using different data points to evaluate the function and gradient at the beginning and the end of the iteration, which can be  harmful to quasi-Newton methods since they employ gradient differences  to update Hessian approximations.  A similar challenge occurs in a \emph{multi-batch} approach in which the data points used to compute the function and gradient are purposely changed at each iteration (or every several iterations) to  accelerate the learning process. The main objective of this paper is to show that \emph{stable quasi-Newton updating} can be achieved in these settings without incurring extra computational cost or  special synchronization. The key is to perform quasi-Newton updating based on the \emph{overlap} between consecutive batches. The only restriction is that this overlap should not be insignificant, something that can be expected, or easily enforced, in most situations.

Recently, several stochastic quasi-Newton (SQN) methods have been proposed; see e.g., \citep{schraudolph2007stochastic,bordes2009sgd,Sammy_SQN,curtis2016self,Wang2016,mokhtari2015global,keskar2016adaqn,gower2016stochastic,Berahas2016}. The methods enumerated above differ in three major aspects: $(i)$ the update rules for the curvature (correction) pairs and the Hessian approximation, $(ii)$ the frequency of updating, and $(iii)$ the required extra computational cost and synchronization required. Our method is different from these methods predominantly due to the fact that it does not modify the BFGS update equations or the form of the curvature pairs, and does not require extra (gradient) computations.  Additionally, our method is designed to work in a distributed settings with faults, in which faults occur randomly and sample consistency cannot be assumed, and as such several SQN methods are not suitable.

We analyze the convergence properties of the multi-batch L-BFGS method using a fixed step length strategy, as well as a diminishing step length strategy, on both strongly convex and nonconvex problems. This is appropriate in our setting, as using a fixed step length approach is popular in practice, and facilitates the study of the stability of quasi-Newton updating in a distributed setting. For strongly convex functions, we show that the algorithm converges, at a linear rate, to an approximate solution whose accuracy depends on the variance of the gradients and the step length. In the nonconvex setting, we show that if cautious BFGS updating is employed, the expected value of the average norm-squared of the gradient is bounded. 

We present numerical experiments on a plethora of problems that arise in machine learning and deep learning. We first illustrate the robustness of our proposed approach on binary classification logistic regression problems on a distributed computing platform with faults and in the serial multi-batch setting. The results indicate that the proposed method achieves a good balance between computation and communication costs. Moreover, we present results on neural network training tasks that illustrate that when larger batch-size is used, our algorithm is competitive with the state-of-the-art. Finally, we demonstrate the strong and weak scaling properties of the proposed method.

\medskip 

The paper is organized as follows. In Section \ref{sec:multi} we describe the multi-batch L-BFGS method in detail. In Section \ref{sec:conv_anal} we provide convergence analyses for the proposed method for strongly convex and nonconvex functions. Numerical results that illustrate the practical performance and robustness of the multi-batch L-BFGS method are reported in Section \ref{sec:num_res}. Finally, in Section \ref{sec:final_rem} we provide some concluding remarks.

\section{A Multi-Batch Quasi-Newton Method}
\label{sec:multi}
\setcounter{equation}{0}

Ideally, in supervised learning, one seeks to minimize  expected risk, defined as
\begin{align} \label{eq:erisk}
    R(w)= \int_\Omega f(w; x, y) d P(x, y) = \mathbb{E}[f(w;x,y)],
 \end{align}
 where $(x,y)$ are input-output pairs, $f : \mathbb{R}^d \rightarrow \mathbb{R}$ is the composition of a prediction function (parametrized by $w$) and a loss function, and $\Omega$ is the space of input-output pairs endowed with a probability distribution $P(x,y)$. Since the distribution $P$ is typically not known, one  approximates \eqref{eq:erisk} by the empirical risk
 \begin{align*}  
  F(w) = \frac{1}{n}\sum_{i=1}^{n}f(w;x^{i},y^{i}) \defeq \frac{1}{n}\sum_{i=1}^{n}f_i(w),
\end{align*}
where $ (x^i, y^i)$, for $i=1, \ldots, n$, denote the training examples, also referred to as data points or samples. The training problem consists of finding an optimal choice of the parameters $w \in \mathbb{R}^d$ with respect to $F$, i.e., to compute a solution of the problem
\begin{align}  \label{eq:obj}
   \min_{w\in\mathbb{R}^d}F(w) =\frac{1}{n}\sum_{i=1}^{n}f_i(w).
\end{align}


In a pure batch approach, one applies a gradient-based method to the deterministic optimization problem \eqref{eq:obj}. In this regime, a popular method is L-BFGS \citep{nocedal1980updating,LiuNocedal89}. When $n$ is large, it is natural to parallelize the computation of $F$ and $\nabla F$ by assigning the evaluation of component functions $f_i$, or subsets of the component functions, to different processors. If this is done on a distributed computing platform, it is possible for some of the computational nodes, dedicated to a portion of the evaluation of the objective function and the gradient, to be slower than the rest. In this case, the contribution of the slow (or unresponsive) computational nodes could potentially be ignored given the stochastic nature of the true objective function \eqref{eq:erisk}. However, this leads to an inconsistency in the objective function and gradient at the beginning and at the end of the iteration, which can be detrimental to quasi-Newton methods, as mentioned above. Hence, we seek to develop a \emph{fault-tolerant} version of the batch L-BFGS method that is capable of dealing with slow or unresponsive computational nodes.


A similar challenge arises in a \emph{multi-batch} implementation of the L-BFGS method in which only a subset of the data is used to compute the gradient at every iteration. We consider a method in which the dataset is randomly divided into  a number of batches and the minimization is performed with respect to a different batch at every iteration. Specifically, at the $k$-th iteration the algorithm chooses $S_k \subset \{1, \ldots, n\}$, computes 
\begin{align}   \label{eq:batch_fg}
F^{S_{k}}(w_k)=\frac{1}{\left|S_{k}\right|}\sum_{i\in S_{k}}f_i\left(w_{k}\right), \qquad {g}_{k}^{S_{k}}=\nabla F^{S_{k}}(w_k) = \frac{1}{\left|S_{k}\right|}\sum_{i\in S_{k}}\nabla f_i\left(w_{k}\right) ,
\end{align}
and takes a step along the direction $- H_k g_k^{S_k}$, where $H_k$ is an approximation to $\nabla^2 F(w_k)^{-1}$. Allowing the sample $S_k$ to change freely at every iteration gives this approach flexibility and is beneficial to the learning process. Note, we refer to $S_k$ as the sample of training points, even though $S_k$ only indexes those points.


The case of unresponsive computational nodes and the multi-batch regime are similar in nature, i.e., the samples $S_k$ used change from one iteration to the next. The main difference is that node failures create unpredictable changes to the samples, whereas a multi-batch method has control over the sample generation. In either case, the algorithm employs a stochastic approximation to the gradient and can no longer be considered deterministic. We must, however, distinguish our setting from that of the classical SG method, which employs small mini-batches. Our algorithm operates with much larger batches so that distributing the computation of the function and gradient is beneficial, and the compute time is not overwhelmed by communication costs. This gives rise to gradients with relatively small variance and justifies the use of a second-order method such as L-BFGS.

The robust implementation of the L-BFGS method, proposed in \cite{Berahas2016}, is based on the following observation:  The  difficulties created by the use of a different sample $S_k$ at each iteration can be circumvented if consecutive samples $S_{k}$ and $S_{k+1}$ have an overlap, so that
\begin{align*}
      O_k= S_{k} \cap S_{k+1} \neq \emptyset.
\end{align*}
 One can then perform \emph{stable quasi-Newton updating} by computing gradient differences based on this overlap, i.e.,  by defining 
\begin{align}   \label{pairs}
       y_{k+1}=g_{k+1}^{O_{k}}-g_{k}^{O_{k}}, \qquad s_{k+1} = w_{k+1}-w_k,
\end{align}
in the notation given in \eqref{eq:batch_fg}, and using this correction pair $(y_k, s_k)$ in the BFGS update. When the overlap set $O_k$ is not too small, $y_k$ is a useful approximation of the curvature of the objective
function along the most recent displacement, and leads to a productive quasi-Newton step. This observation is based on an important property of Newton-like methods, namely that there is much more freedom in choosing a Hessian approximation than in computing the gradient \citep{byrd2011use,bollapragada2016exact,martens2010deep,berahas2017investigation}. More specifically, a smaller sample $O_k$ can be employed for updating the inverse Hessian approximation $H_k$, than for computing the batch gradient $g_k^{S_k}$ used to define the search direction $- H_k g_k^{S_k}$. In summary, by ensuring that unresponsive nodes do not constitute the vast majority of all compute nodes in a {fault-tolerant} parallel implementation, or by exerting a small degree of control in the creation of the samples $S_k$ in the multi-batch regime, one can design a robust method that naturally builds upon the fundamental properties of BFGS updating.

We should mention that a  commonly used fix  for ensuring stability of quasi-Newton updating in machine learning is to enforce gradient consistency \citep{schraudolph2007stochastic,mokhtari2015global}, i.e., to use the same sample $S_k$ to compute gradient evaluations at the beginning and the end of the iteration, at the cost of double gradient evaluations.  Another popular remedy is to use the same batch $S_k$ for multiple iterations \citep{ngiam2011optimization}, alleviating the gradient inconsistency problem at the price of slower convergence. In this paper, we assume that such \emph{sample consistency is not possible} (the fault-tolerant case) or \emph{desirable} (the multi-batch regime), and wish to design and analyze an implementation of L-BFGS that imposes minimal restrictions in the changes of the sample.

\subsection{Specification of the Method}
Let us begin by considering a robust implementation of the multi-batch BFGS method and  then consider its limited memory version. At  the $k$-th iteration, the multi-batch BFGS algorithm chooses a set $S_k \subset \{1, \ldots, n\}$ and computes a new iterate by the formula
\begin{align}  \label{eq:update}
w_{k+1}=w_k-\alpha_{k}H_{k} g_{k}^{S_{k}} ,
\end{align}
where $\alpha_{k}$ is the step length, $g_k^{S_k}$ is the batch gradient \eqref{eq:batch_fg} and $H_{k}$ is the inverse BFGS
Hessian matrix approximation that is updated at every iteration by means
of the formula
\begin{equation*}   
H_{k+1}=V_{k}^{T}H_{k}V_{k}+\rho_{k}s_{k}s_{k}^{T}, \qquad \rho_{k}=\frac{1}{y_{k}^{T}s_{k}}, \qquad V_{k}=1-\rho_{k}y_{k}s_{k}^{T} .
\end{equation*}
 To compute the correction vectors $(s_k, y_k)$, we determine the overlap set $O_k = S_k \cap S_{k+1}$ consisting of the samples that are common at the $k$-th and $k+1$-st iterations. We define  
 \begin{equation*}  
         {g}_{k}^{O_{k}}=\nabla F^{O_k}(w_k)=\frac{1}{\left|O_{k}\right|}\sum_{i\in O_{k}}\nabla f_i\left(w_k\right),
\end{equation*}
and compute the correction pairs as in \eqref{pairs}. This completely specifies the algorithm, except for the choice of step length $\alpha_k$; in this paper we consider constant and diminishing step lengths.

In the limited memory version, the matrix $H_k$ is defined at each iteration as the result of applying $m$ BFGS updates  to a multiple of the identity matrix, using a set of $m$ correction pairs $\{s_i, y_i\}$ kept in storage. The memory parameter $m$ is typically in the range 2 to 20.  When computing the search direction (matrix-vector product) in \eqref{eq:update} it is not necessary to form the dense matrix $H_k$ since one can obtain this product via the two-loop recursion \citep{mybook}, using the $m$ most recent correction pairs. Employing this mechanism, the search direction can be computed in $\mathcal{O}(d)$ floating operations, where $d$ is the number of variables. After the step has been computed, the oldest pair $(s_j, y_j)$ is discarded and the new curvature pair is stored.

A pseudo-code of the multi-batch limited-memory BFGS algorithm is given in Algorithm \ref{alg:multi}, and depends on several parameters.  The parameter $r$ denotes the fraction of samples in the dataset used to define the gradient, i.e., $r = \frac{\left| S\right|}{n}$. The parameter $o$ denotes the length of overlap between consecutive samples, and is defined  as a fraction of the number of samples in a given batch $S$, i.e., $o = \frac{\left| O\right|}{\left| S\right|}$.

\begin{algorithm}[]
\caption{Multi-Batch L-BFGS}
  \label{alg:multi}
 {\bf Input:} $w_{0}$ (initial iterate), $m$ (memory parameter), $r$ (batch, fraction of $n$), $o$ (overlap, fraction of batch), $k\leftarrow0$ (iteration counter). 

  \begin{algorithmic}[1]
  \State Create initial batch $S_{0}$ \label{step:samp1}
  \For {$k=0,1,2,...$}
\State Calculate the search direction $p_{k}=-H_{k}g_{k}^{S_{k}}$ 
\State Choose a step length $\alpha_{k} >0$
\State Compute  $w_{k+1}=w_k+\alpha_{k}p_{k}$ 
\State Create the next batch $S_{k+1}$ \label{step:samp2}
\State Compute the curvature pairs $s_{k+1}=w_{k+1}-w_k$ and
$y_{k+1}=g_{k+1}^{O_{k}}-g_{k}^{O_{k}}$ 
\State Replace the oldest  pair $(s_i, y_i)$  by $s_{k+1}, y_{k+1}$ (if $m$ pairs stored, else just add)
\EndFor
  \end{algorithmic}
\end{algorithm}

\subsection{Sample Generation}
\label{sec:sampling}

The \textit{fault-tolerant} and \textit{multi-batch} settings differ in the way the samples $S_k$ and $O_k$ are formed (Lines \ref{step:samp1} \& \ref{step:samp2}, Algorithm \ref{alg:multi}). In the former, sampling is done automatically as a by-product of the nodes that fail to return a computation (gradient evaluation). In the latter, the samples $S_k$ and $O_k$ used at every iteration are purposefully changed in order to accelerate the learning process, thus sampling is user controlled. In either setting, independent sampling can be achieved; a necessary condition to establish convergence results. We first describe the \textit{fault-tolerant} setting, and then propose two sampling strategies that can be employed in the \textit{multi-batch} setting. Let $ T= \{ (x^i, y^i)$, for $i=1, \ldots, n\}$ denote the training set.

\begin{figure}[]
\begin{center}
\subfigure[Fault-tolerant sampling\label{ft_samp}]{
\resizebox*{6.5cm}{!}{\includegraphics{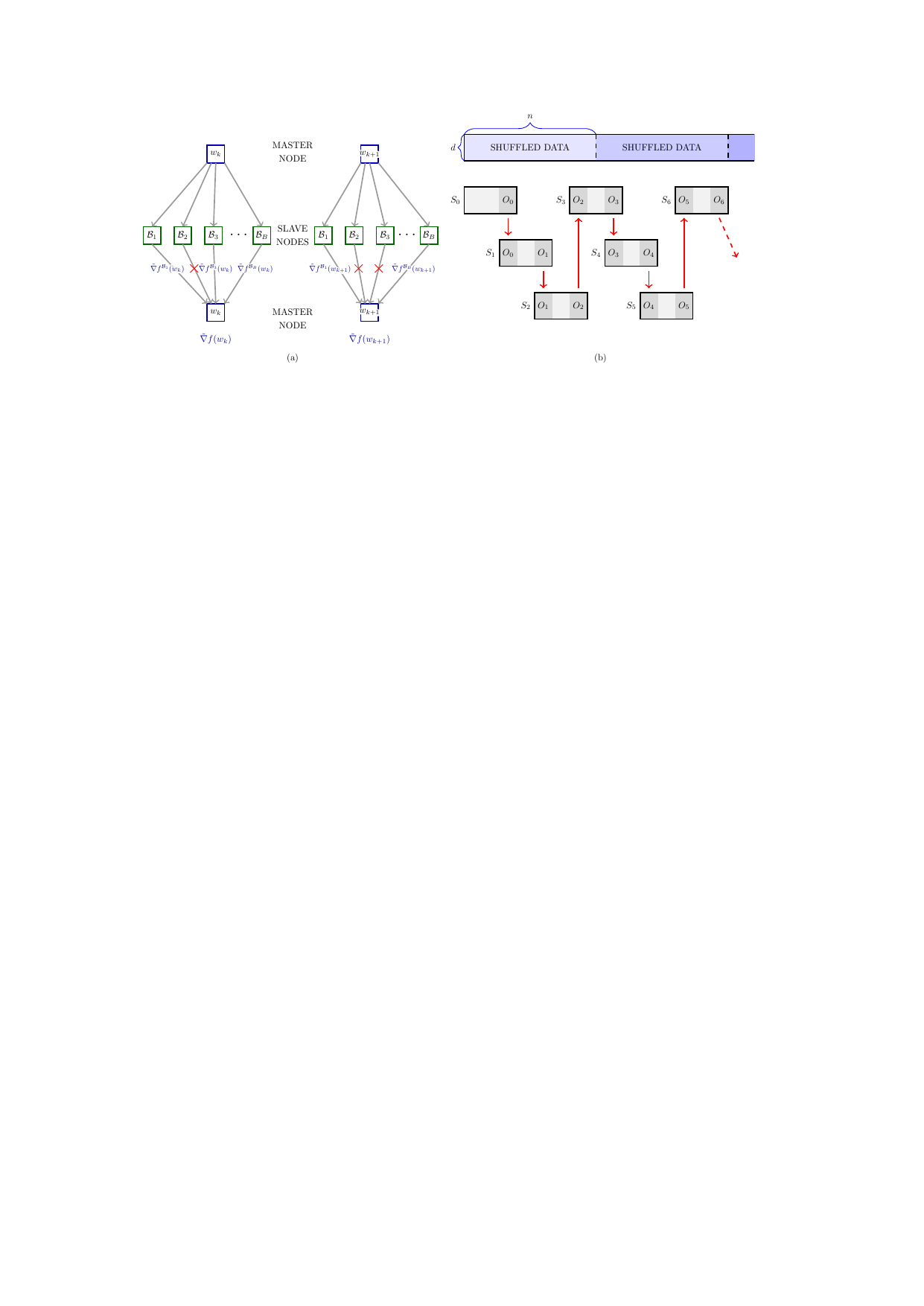}}}\hspace{2.5pt}
\subfigure[Multi-batch sampling\label{mb_samp}]{
\resizebox*{6.5cm}{!}{\includegraphics{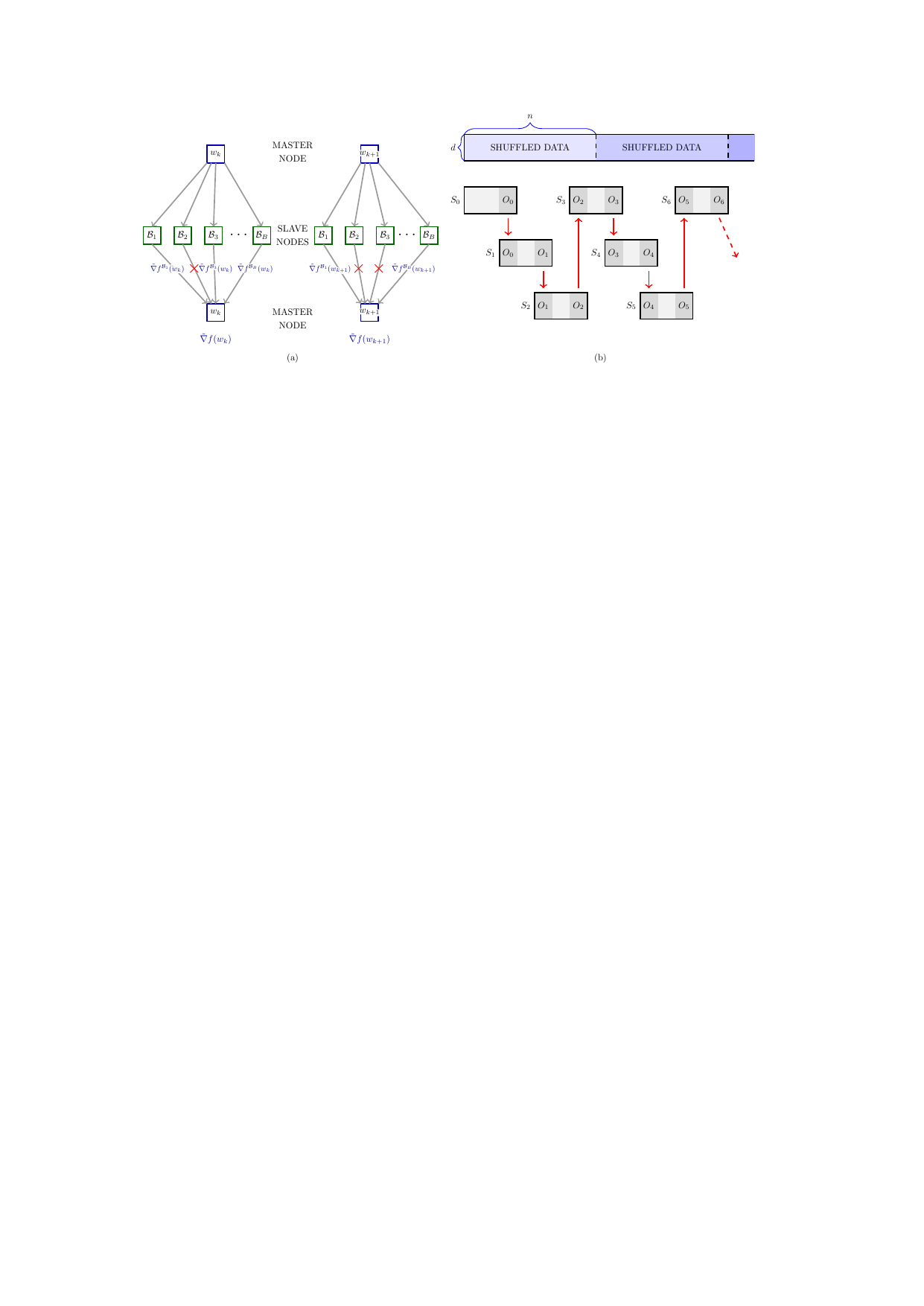}}}
\caption{Sample and Overlap formation for two adversarial situations.} \label{fig:sample_creation}
\end{center}
\end{figure}

\paragraph*{\textbf{Fault-Tolerant}}
Consider a distributed implementation in which slave nodes read the current iterate $w_k$ from the master node, compute a local gradient on a subset of the dataset, and send it back to the master node for aggregation in the calculation \eqref{eq:batch_fg}.  Given a time (computational) budget, it is possible for some nodes to fail to return a result. The schematic in Figure~\ref{ft_samp} illustrates the gradient calculation across two iterations, $k$ and $k+1$, in the presence of faults.  Here,  $B$ is the total number of slave nodes, $\mathcal{B}_i$ for $i=1,...,B$ denote the batches of data that each slave node $i$ receives ($T = \cup_i \mathcal{B}_i$),  and 
$\tilde{\nabla}f(w)$ is the gradient calculation using all nodes that responded within the preallocated time.

Let $\mathcal{J}_k\subset \{1,2,...,B\}$ and $\mathcal{J}_{k+1}\subset \{1,2,...,B\}$ be the set of indices of all nodes that returned a gradient at the $k$-th and $k+1$-st iterations, respectively. Using this notation $S_k = \cup_{j\in \mathcal{J}_k} \mathcal {B}_j$ and $S_{k+1} = \cup_{j\in \mathcal{J}_{k+1}} \mathcal {B}_j$, and we define $O_k = \cup_{j \in \mathcal{J}_k\cap \mathcal{J}_{k+1}} \mathcal {B}_j$. The simplest implementation in this setting preallocates the data on each compute node, requiring minimal data communication, i.e., only one data transfer. In this case, the samples $S_k$ are independent if node failures occur randomly. On the other hand, if the same set of nodes fail, then the sample creation will be biased, which is harmful both in theory and in practice.  One way to ensure independent sampling is to shuffle and redistribute the data to all nodes after every iteration or after a certain number of iterations.

\paragraph*{\textbf{Multi-Batch Sampling}}
In the \textit{multi-batch} setting several strategies can be employed, with the only restriction that consecutive batches $S_k$ and $S_{k+1}$ should, to a certain degree, overlap. We propose two sampling strategies: $(i)$ overlaps $O_k$ are forced in the sample creation process, $(ii)$ the overlapping set $O_k$ is subsampled from the batch $S_k$. In practice the two strategies perform on par, however, there is a subtle difference. In the second strategy the batches are sampled independently, something that is not true for the strategy in which overlapping samples are forced. The independent sampling strategy of course does not come for free as this strategy incurs an increase in computational cost per iteration. However, as mentioned above, the overlapping set $O_k$ need not be very large, and thus the increase in cost is negligible as compared to the rest of the computation. We now describe the two approaches in more detail.

Figure~\ref{mb_samp} illustrates the sample creation process in the first strategy. The dataset is shuffled and batches are generated by collecting subsets of the training set, in order. Every set (except $S_0$) is of the form $S_k= \{ O_{k-1}, N_k, O_k\}$, where $O_{k-1}$ and $O_k$ are the overlapping samples with batches $S_{k-1}$ and $S_{k+1}$ respectively, and $N_k$ are the samples that are unique to batch $S_k$. After each pass through the dataset, the samples are reshuffled, and the procedure described above is repeated. In our implementation samples are drawn without replacement, guaranteeing that after every epoch (pass over the whole dataset) all samples are used. This strategy has the advantage that it requires no extra computation in the evaluation of $g_k^{O_k}$ and $g_{k+1}^{O_k}$, but the samples $S_k$ are not independent.

The second sampling  strategy is simpler and requires less control. At every iteration $k$, a batch $S_k$ is created by randomly selecting $\left| S_k \right|$ elements from $\{1,\ldots n\}$. The set $O_k$ is then formed by randomly selecting $\left| O_k \right|$ elements from $S_k$ (subsampling). Note, in this sampling strategy the samples $O_k$ need not be in the set $ S_{k+1} $. This strategy is slightly more expensive since $g_{k+1}^{O_k}$ requires extra computation, but if the overlap is small this cost is not significant. 

\section{Convergence Analysis}
\label{sec:conv_anal}
\setcounter{equation}{0}

In this Section, we analyze the convergence properties of the multi-batch L-BFGS method (Algorithm \ref{alg:multi}) when applied to the minimization of \emph{strongly convex}  and \emph{nonconvex} objective functions, using a fixed step length strategy, as well as a diminishing step length strategy. We assume that the goal is to minimize the empirical risk $F$ \eqref{eq:obj}, but note that a similar analysis could be used to study the minimization of the expected risk \eqref{eq:erisk}.  

\subsection{Strongly Convex case}
Due to the stochastic nature of the multi-batch approach, every iteration of Algorithm~\ref{alg:multi} employs a gradient that contains errors that do not converge to zero. Therefore, by using a fixed step length strategy one cannot establish convergence to the optimal solution $w^{\star}$, but only convergence to a neighborhood of $w^{\star}$ \citep{nedic2001convergence}. Nevertheless, this result is of interest as it reflects the common practice of using a fixed step length and decreasing it only if the desired testing error has not been achieved. It also illustrates the tradeoffs that arise between the size of the batch and the step length.

In our analysis, we make the following assumptions about the objective function and the algorithm.

\paragraph*{\textbf{Assumptions A}}
\emph{ 
\begin{enumerate} 
\item $F$ is twice continuously differentiable.
\item There exist positive constants $\hat{\lambda}$ and $\hat{\Lambda}$ such that
$\hat{\lambda} I \preceq \nabla^2F^O(w) \preceq \hat{\Lambda} I$
for all $w \in \mathbb{R}^d$ and all sets $O \subset  \{1,2,\ldots,n\}$ of length $|O| = o\cdot r\cdot n$.
\item There exist constants $\gamma \geq 0$ and $\eta \geq 1$ such that $\mathbb{E}_{S}\left[ \| \nabla  F^{S}(w) \|^2 \right] \leq \gamma^2 + \eta \| \nabla F(w)\|^2$
for all $w \in \mathbb{R}^d$ and all sets $S\subset  \{1,2,\ldots,n\}$  of length $|S|=r \cdot n$.
\item The samples $S$ are drawn independently and $\nabla F^{S}(w)$ is an unbiased estimator of the true gradient $\nabla F(w)$ for all $w \in \mathbb{R}^d$, i.e.,
$
\mathbb{E}_{S}[ \nabla F^{S}(w)] = \nabla F(w).$
\end{enumerate}}
\noindent
Note that Assumption $A.2$ implies that the entire Hessian $\nabla^2F(w)$ also satisfies 
\begin{align}  \label{eq:b_hess1}
 \lambda I \preceq \nabla^2F(w) \preceq  \Lambda I,  \quad \forall w \in \mathbb{R}^d,
\end{align}
for some constants $ \lambda,  \Lambda>0$. Assuming that every subsampled function $F^O(w)$ is strongly convex is not unreasonable  as a regularization term is commonly added in practice when that is not the case.

\medskip

We begin by showing that the inverse Hessian approximations $H_k$ generated by the multi-batch L-BFGS method have eigenvalues that are uniformly bounded above and away from zero. The proof technique used is an adaptation of that in \citep{Sammy_SQN}.

\begin{lemma}	\label{lemma1}
If Assumptions A.1 \& A.2 hold, there exist constants $0<\mu_1\leq \mu_2$ such that the inverse Hessian approximations $\{H_k\}$ generated by Algorithm~\ref{alg:multi} satisfy
\begin{align*}    
\mu_1 I \preceq H_k \preceq \mu_2 I,\qquad \text{for } k=0,1,2,\dots 
\end{align*}
\end{lemma}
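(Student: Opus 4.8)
The plan is to adapt the classical trace--determinant (Powell) argument for BFGS, as used in \citep{Sammy_SQN}, to the correction pairs produced on the overlap sets. First I would control the pairs $(s_{k+1},y_{k+1})$ themselves. Since $F$ is twice continuously differentiable and, by Assumption A.2, every overlap Hessian satisfies $\hat\lambda I \preceq \nabla^2 F^{O_k}(w) \preceq \hat\Lambda I$, the mean value theorem gives
\[
y_{k+1} = g_{k+1}^{O_k} - g_k^{O_k} = \Big( \int_0^1 \nabla^2 F^{O_k}(w_k + \tau s_{k+1})\, d\tau \Big)\, s_{k+1} =: M_{k+1}\, s_{k+1},
\]
with $\hat\lambda I \preceq M_{k+1} \preceq \hat\Lambda I$. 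Whenever $s_{k+1}\neq 0$ (which holds automatically since $s_{k+1}=\alpha_k p_k$ and $H_k$ is nonsingular, unless $g_k^{S_k}=0$), this yields the curvature condition $s_{k+1}^T y_{k+1} = s_{k+1}^T M_{k+1} s_{k+1} \geq \hat\lambda \|s_{k+1}\|^2 > 0$, so $\rho_{k+1}$ is well defined and BFGS updating preserves positive definiteness; moreover, via the substitution $z = M_{k+1}^{1/2}s_{k+1}$,
\[
\frac{s_{k+1}^T y_{k+1}}{\|s_{k+1}\|^2} \geq \hat\lambda , \qquad \frac{\|y_{k+1}\|^2}{s_{k+1}^T y_{k+1}} \leq \hat\Lambda .
\]

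Next I would pass to the direct approximation $B_k = H_k^{-1}$, which at iteration $k$ is obtained by applying $\tilde m := \min\{k,m\}\le m$ direct BFGS updates to a base matrix $B_k^{(0)}$ that is a positive multiple of the identity with eigenvalues in a fixed interval (the simplest choice $B_k^{(0)}=I$ works; one checks using Step 1 that the usual scaling $\tfrac{y_k^Ty_k}{s_k^Ty_k}I$ also has eigenvalue in $[\hat\lambda,\hat\Lambda]$). Each direct BFGS update obeys $\mathrm{tr}(B^{+}) = \mathrm{tr}(B) - \|Bs\|^2/(s^T B s) + \|y\|^2/(s^T y) \le \mathrm{tr}(B) + \hat\Lambda$, so iterating over the $\tilde m$ updates gives $\mathrm{tr}(B_k) \le \mathrm{tr}(B_k^{(0)}) + m\hat\Lambda =: \kappa_1$, a bound independent of $k$. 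In particular $\lambda_{\max}(B_k)\le\kappa_1$, i.e. $H_k \succeq \kappa_1^{-1} I =: \mu_1 I$.

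For the upper bound on $H_k$ I would use Powell's determinant identity $\det(B^{+}) = \det(B)\,(s^T y)/(s^T B s)$, together with
\[
\frac{s^T y}{s^T B s} = \frac{s^T y}{\|s\|^2}\cdot\frac{\|s\|^2}{s^T B s} \geq \frac{\hat\lambda}{\lambda_{\max}(B)} \geq \frac{\hat\lambda}{\kappa_1},
\]
where the last step uses that the trace bound $\kappa_1$ holds for every intermediate matrix in the chain of updates. Iterating over the $\tilde m\le m$ updates gives $\det(B_k) \ge \det(B_k^{(0)})\,(\hat\lambda/\kappa_1)^m =: \kappa_2 > 0$, again uniform in $k$. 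Combining with $\lambda_{\max}(B_k)\le\kappa_1$ yields $\lambda_{\min}(B_k) \ge \det(B_k)/\lambda_{\max}(B_k)^{d-1} \ge \kappa_2/\kappa_1^{d-1}$, hence $H_k \preceq (\kappa_1^{d-1}/\kappa_2) I =: \mu_2 I$. Together with the previous step this establishes $\mu_1 I \preceq H_k \preceq \mu_2 I$ for all $k$.

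The computation is otherwise routine; the two points that need care are (i) the limited-memory bookkeeping --- every constant must be uniform in $k$, which works precisely because at iteration $k$ only $\min\{k,m\}\le m$ updates are applied to a freshly re-initialized, uniformly bounded base matrix --- and (ii) verifying the curvature condition $s_k^T y_k>0$ needed for the updates to be well defined and positive definite, which is exactly where strong convexity of the overlap subfunctions (Assumption A.2, and \emph{only} A.2 together with A.1) enters. No probabilistic argument is needed: the bound is deterministic given A.1--A.2.
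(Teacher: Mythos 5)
Your proof is correct and follows essentially the same route as the paper: bound the curvature ratios $\|y\|^2/(s^Ty)$ and $(s^Ty)/\|s\|^2$ via the overlap Hessian bounds of Assumption A.2, then run the standard trace--determinant argument on the direct approximations $B_k = H_k^{-1}$ built from $\tilde m \le m$ updates of a uniformly bounded initial matrix. Your version is if anything slightly more explicit (the integral mean-value representation $y_{k+1}=M_{k+1}s_{k+1}$ and the final step $\lambda_{\min}(B_k)\ge \det(B_k)/\lambda_{\max}(B_k)^{d-1}$ are stated rather than left implicit), but the substance is identical.
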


\begin{proof}
Instead of analyzing the inverse Hessian approximation $H_k$, we study the Hessian approximation $B_k = H_k^{-1}$. In this case, the limited memory quasi-Newton updating formula is given as follows:
\begin{enumerate}
\item Set $B_k^{(0)}=\frac{y_k^Ty_k}{s_k^Ty_k}I$ and $\tilde{m} = \min\{k,m\}$; where $m$ is the memory in L-BFGS.
\item For $i=0,...,\tilde{m}-1$ set $j=k-\tilde{m}+1+i$ and compute
\begin{align*}    
B_k^{(i+1)}=B_k^{(i)}-\frac{B_k^{(i)}s_js_j^TB_k^{(i)}}{s_j^TB_k^{(i)}s_j} + \frac{y_jy_j^T}{y_j^Ts_j}.
\end{align*}
\item Set $B_{k+1} = B_k^{(\tilde{m})}.$
\end{enumerate}

The curvature pairs $s_k$ and $y_k$ are updated via the following formulae
\begin{align}    \label{eq:curv_upd}
y_{k+1}=g_{k+1}^{O_{k}}-g_{k}^{O_{k}}, \qquad s_{k+1} = w_{k+1}-w_k.
\end{align}
A consequence of Assumption $A.2$ is that the eigenvalues of any sub-sampled Hessian ($\left| O \right|$ samples) are bounded above and away from zero. Utilizing this fact, the convexity of component functions and the definitions  \eqref{eq:curv_upd}, we have
\begin{align} \label{eq:expr1}
y_k^Ts_k \geq \frac{1}{\hat{\Lambda}}\|y_k\|^2  \quad & \Rightarrow \quad \frac{\|y_k\|^2 }{y_k^Ts_k} \leq \hat{\Lambda}.
\end{align}
On the other hand, strong convexity of the sub-sampled functions, the consequence of Assumption $A.2$ and definitions  \eqref{eq:curv_upd}, provide a lower bound,
\begin{align} \label{eq:expr2}
y_k^Ts_k \leq \frac{1}{\hat{\lambda}}\|y_k\|^2  \quad & \Rightarrow \quad \frac{\|y_k\|^2 }{y_k^Ts_k} \geq \hat{\lambda}.
\end{align}   
Combining the upper and lower bounds \eqref{eq:expr1} and \eqref{eq:expr2}  
\begin{align}    \label{eq:bound}
\hat{\lambda} \leq \frac{\|y_k\|^2}{y_k^Ts_k} \leq \hat{\Lambda}.
\end{align}

The above proves that the eigenvalues of the matrices $B_k^{(0)}=\frac{y_k^Ty_k}{s_k^Ty_k}I$ at the start of the L-BFGS update cycles are bounded above and away from zero, for all $k$. We now use a Trace-Determinant argument to show that the eigenvalues of $B_k$ are bounded above and away from zero. Let $tr(B)$ and $\det(B)$ denote the trace and determinant of matrix $B$, respectively, and set $j_i = k-\tilde{m}+i$. The trace of the matrix $B_{k+1}$ can be expressed as, 
\begin{align} \label{eq:trace}
tr(B_{k+1}) &= tr\left(B_k^{(0)}\right) - tr\sum_{i=1}^{\tilde{m}}\left(\frac{B_k^{(i-1)}s_{j_i}s_{j_i}^TB_k^{(i-1)}}{s_{j_i}^TB_k^{(i-1)}s_{j_i}}\right) + tr\sum_{i=1}^{\tilde{m}} \frac{y_{j_i}y_{j_i}^T}{y_{j_i}^Ts_{j_i}}\nonumber\\
&\leq tr\left(B_k^{(0)}\right) + \sum_{i=1}^{\tilde{m}} \frac{\|y_{j_i}\|^2}{y_{j_i}^Ts_{j_i}}
\leq tr\left(B_k^{(0)}\right) + \tilde{m}\hat{\Lambda} 
 \leq C_1, 
\end{align}
for some positive constant $C_1$, where the inequalities above are due to \eqref{eq:bound}, and the fact that the eigenvalues of the initial L-BFGS matrix $B_k^{(0)}$ are bounded above and away from zero.

Using a result due to \cite{powell1976some}, the determinant of the matrix  $B_{k+1}$ generated by the multi-batch L-BFGS method can be expressed as, 
\begin{align}	\label{eq:det}
\det (B_{k+1}) &= \det \left(B_{k}^{(0)}\right) \prod_{i=1}^{\tilde{m}} \frac{y_{j_i}^Ts_{j_i}}{s_{j_i}^TB_{k}^{(i-1)}s_{j_i}} 
=  \det \left(B_{k}^{(0)}\right) \prod_{i=1}^{\tilde{m}} \frac{y_{j_i}^Ts_{j_i}}{s_{j_i}^Ts_{j_i}}  \frac{s_{j_i}^Ts_{j_i}}{s_{j_i}^TB_{k}^{(i-1)}s_{j_i}}\nonumber\\
& \geq  \det \left(B_{k}^{(0)}\right) \left( \frac{\hat{\lambda}}{C_1} \right)^{\tilde{m}} 
 \geq C_2,
\end{align}
for some positive constant $C_2$, where the above inequalities are due to the fact that the largest eigenvalue of $B_{k}^{(i)}$ is less than $C_1$ and Assumption $A.2$.

The trace \eqref{eq:trace} and determinant \eqref{eq:det} inequalities derived above imply that the largest eigenvalues of all matrices $B_k$ are bounded from above, uniformly, and that the smallest eigenvalues of all matrices $B_k$ are bounded away from zero, uniformly. 
\end{proof}

Before we present the main theorem for the multi-batch L-BFGS method that employs constant step lengths, we state one more intermediate Lemma that bounds the distance between the function value at any point $w\in \mathbb{R}^d$ and the optimal function value with respect to the norm of the gradient squared.

\begin{lemma} 	\label{lemma1a}
Let Assumptions A.1 \& A.2 hold, and let $F^\star = F(w^\star)$, where $w^\star$ is the minimizer of $F$. Then, for all $w\in \mathbb{R}^d$,
\begin{align*}
	2\lambda(F(w)-F^\star) \leq \| \nabla F(w)\|^2.
\end{align*}
\end{lemma}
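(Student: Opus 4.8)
The plan is to derive this as a standard consequence of strong convexity of $F$, which follows from Assumption $A.2$ via the inequality \eqref{eq:b_hess1}. The key fact is that $F$ satisfies $\lambda I \preceq \nabla^2 F(w)$ for all $w$, i.e., $F$ is $\lambda$-strongly convex.

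First, I would write down the second-order Taylor expansion with integral remainder: for any $w, z \in \mathbb{R}^d$,
\begin{align*}
F(z) = F(w) + \nabla F(w)^T(z-w) + \frac12 (z-w)^T \left(\int_0^1 \nabla^2 F(w + t(z-w))\, dt\right)(z-w).
\end{align*}
Using the lower bound $\nabla^2 F \succeq \lambda I$ from \eqref{eq:b_hess1}, this yields the strong convexity lower bound
\begin{align*}
F(z) \geq F(w) + \nabla F(w)^T(z-w) + \frac{\lambda}{2}\|z-w\|^2.
\end{align*}
Next, I would minimize the right-hand side over $z$: the quadratic $z \mapsto \nabla F(w)^T(z-w) + \frac{\lambda}{2}\|z-w\|^2$ is minimized at $z = w - \frac{1}{\lambda}\nabla F(w)$, giving minimum value $-\frac{1}{2\lambda}\|\nabla F(w)\|^2$. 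Since the inequality holds for every $z$, in particular for this minimizer, we get $F(z) \geq F(w) - \frac{1}{2\lambda}\|\nabla F(w)\|^2$ for that $z$; but more directly, applying the bound at $z = w^\star$ is not quite what we want. Instead, the cleanest route: for all $z$, $F(z) \geq F(w) - \frac{1}{2\lambda}\|\nabla F(w)\|^2$ after the minimization, hence in particular $F^\star = \min_z F(z) \geq F(w) - \frac{1}{2\lambda}\|\nabla F(w)\|^2$. Rearranging gives $2\lambda(F(w) - F^\star) \leq \|\nabla F(w)\|^2$, as claimed.

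I do not anticipate any serious obstacle here — this is the textbook Polyak–Łojasiewicz inequality for strongly convex functions, and the only mild subtlety is justifying that $F$ (not just the subsampled $F^O$) inherits the lower curvature bound, which the excerpt already grants via \eqref{eq:b_hess1}, together with Assumption $A.1$ ($F$ twice continuously differentiable) to legitimize the Taylor expansion with integral remainder. One could alternatively avoid Taylor's theorem entirely by using the characterization of $\lambda$-strong convexity as $(\nabla F(w) - \nabla F(z))^T(w-z) \geq \lambda\|w-z\|^2$ combined with convexity, but the Taylor argument is the most transparent and self-contained.
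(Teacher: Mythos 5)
Your proof is correct, but it takes a different (equally standard) route to the Polyak--{\L}ojasiewicz inequality than the paper does. The paper invokes the strong-convexity inequality $F(x) \leq F(y) + \nabla F(y)^T(x-y) + \tfrac{1}{2\lambda}\|\nabla F(y)-\nabla F(x)\|^2$ (citing Nesterov, Ch.~2.1.3) and simply evaluates it at $y = w^\star$, where $\nabla F(w^\star) = 0$ kills both the linear term and half of the quadratic term; the result then falls out in two lines. You instead derive the lower quadratic bound $F(z) \geq F(w) + \nabla F(w)^T(z-w) + \tfrac{\lambda}{2}\|z-w\|^2$ from Taylor's theorem with integral remainder and \eqref{eq:b_hess1}, minimize the right-hand side over $z$ to get $-\tfrac{1}{2\lambda}\|\nabla F(w)\|^2$, and then take the infimum of the left-hand side to bring in $F^\star$. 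Your argument is more self-contained --- it needs nothing beyond the Hessian bound and elementary calculus, and it never touches $\nabla F(w^\star)$ or the minimizer itself, only $\inf_z F(z)$ --- whereas the paper's is shorter but leans on a cited inequality whose own proof is comparable in length to what you wrote. One stylistic note: the digression in the middle of your write-up (``applying the bound at $z=w^\star$ is not quite what we want \dots'') should be cut, since the final minimize-then-infimize argument is clean and complete on its own.
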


\begin{proof} As a result of Assumptions $A.1$, $A.2$ and \eqref{eq:b_hess1}, for all $x,y \in \mathbb{R}^d$
\begin{align*}
	F(x) \leq F(y) + \nabla F(y)^T (x-y) + \frac{1}{2\lambda}\| \nabla F(y) - \nabla F(x)\|^2;
\end{align*}
see \citep[Chapter~2.1.3]{nesterov2013introductory}.
Let $x=w$ and $y=w^\star$
\begin{align*}
F(w) &\leq F^\star + \nabla F(w^\star)^T (w-w^\star) + \frac{1}{2\lambda}\| \nabla F(w) - \nabla F(w^\star)\|^2 \\ 
& \leq F^\star + \frac{1}{2\lambda}\| \nabla F(w) \|^2.
\end{align*}
Re-arranging the above expression yields the desired result.
\end{proof}

Utilizing Lemmas \ref{lemma1} and \ref{lemma1a}, we show that the multi-batch L-BFGS method with a constant step length converges linearly to a neighborhood of the optimal solution.

\begin{theorem}
\label{thm:const}
Suppose that Assumptions A.1-A.4 hold, and let $F^{\star} = F(w^{\star})$, where $w^{\star}$ is the minimizer of $F$. Let $\{w_k\}$ be the iterates generated by Algorithm~\ref{alg:multi}, where $\alpha_k = \alpha$ satisfies
\begin{align}	\label{eq:steplength_sc}
0 <  \alpha  \leq \frac{\mu_1}{\mu_2^2 \eta \lambda},
\end{align}
and $w_0$ is the starting point.
Then for all $k\geq 0$,
\begin{align*}   
\mathbb{E}\big[ F(w_k) - F^{\star}\big] & \leq \big( 1-\alpha \mu_1 \lambda \big)^k \left[ F(w_0) - F^{\star} \right] + \left[ 1-(1-\alpha\mu_1 \lambda)^k\right]\frac{\alpha \mu_2^2 \gamma ^2 \Lambda}{2 \mu_1 \lambda}
\\    &
      \xrightarrow[]{k\rightarrow \infty} \frac{\alpha \mu_2^2 \gamma ^2 \Lambda}{2 \mu_1 \lambda}. 
\end{align*}
\end{theorem}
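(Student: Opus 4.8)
The plan is to run a standard descent-lemma argument adapted to the stochastic multi-batch setting, leveraging the uniform eigenvalue bounds on $H_k$ from Lemma \ref{lemma1} and the gradient-error bound from Assumption $A.3$. First I would use the $\Lambda$-smoothness of $F$ (from \eqref{eq:b_hess1}) to write, for the update $w_{k+1} = w_k - \alpha H_k g_k^{S_k}$,
\begin{align*}
F(w_{k+1}) \leq F(w_k) - \alpha \nabla F(w_k)^T H_k g_k^{S_k} + \frac{\Lambda \alpha^2}{2} \| H_k g_k^{S_k}\|^2 .
\end{align*}
Then I would take the conditional expectation over the sample $S_k$ given $w_k$ (and given $H_k$, which depends only on past samples, so it is deterministic under this conditioning). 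Using unbiasedness (Assumption $A.4$), $\mathbb{E}_{S_k}[g_k^{S_k}] = \nabla F(w_k)$, so the linear term becomes $-\alpha \nabla F(w_k)^T H_k \nabla F(w_k) \leq -\alpha \mu_1 \|\nabla F(w_k)\|^2$ by the lower eigenvalue bound. For the quadratic term, $\|H_k g_k^{S_k}\|^2 \leq \mu_2^2 \|g_k^{S_k}\|^2$, and taking expectation and applying Assumption $A.3$ gives $\mathbb{E}_{S_k}[\|g_k^{S_k}\|^2] \leq \gamma^2 + \eta \|\nabla F(w_k)\|^2$.

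Combining these yields
\begin{align*}
\mathbb{E}_{S_k}[F(w_{k+1})] \leq F(w_k) - \left( \alpha \mu_1 - \frac{\Lambda \alpha^2 \mu_2^2 \eta}{2}\right) \|\nabla F(w_k)\|^2 + \frac{\Lambda \alpha^2 \mu_2^2 \gamma^2}{2}.
\end{align*}
The step-length restriction \eqref{eq:steplength_sc}, namely $\alpha \leq \mu_1/(\mu_2^2 \eta \lambda)$, together with $\lambda \leq \Lambda$, ensures $\frac{\Lambda \alpha^2 \mu_2^2 \eta}{2} \leq \frac{\alpha \mu_1}{2}$, so the coefficient of $\|\nabla F(w_k)\|^2$ is at most $-\frac{\alpha \mu_1}{2}$. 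Next I would subtract $F^\star$ from both sides and invoke Lemma \ref{lemma1a}, i.e. $\|\nabla F(w_k)\|^2 \geq 2\lambda(F(w_k) - F^\star)$, to obtain the contraction
\begin{align*}
\mathbb{E}_{S_k}[F(w_{k+1}) - F^\star] \leq (1 - \alpha \mu_1 \lambda)(F(w_k) - F^\star) + \frac{\Lambda \alpha^2 \mu_2^2 \gamma^2}{2}.
\end{align*}
Taking total expectations and unrolling this recursion over $k$ iterations, summing the geometric series $\sum_{j=0}^{k-1}(1-\alpha\mu_1\lambda)^j$, gives exactly the claimed bound, with the residual term $\frac{\Lambda \alpha^2 \mu_2^2 \gamma^2}{2} \cdot \frac{1 - (1-\alpha\mu_1\lambda)^k}{\alpha \mu_1 \lambda}$ matching $[1 - (1-\alpha\mu_1\lambda)^k]\frac{\alpha \mu_2^2 \gamma^2 \Lambda}{2\mu_1\lambda}$; letting $k \to \infty$ recovers the asymptotic neighborhood, noting $0 < 1 - \alpha\mu_1\lambda < 1$ is guaranteed by the step-length bound.

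The main obstacle — really the only subtle point — is the conditioning/independence bookkeeping: one must be careful that $H_k$ is measurable with respect to the history up to iteration $k$ and is therefore constant under the conditional expectation over $S_k$, so that the eigenvalue bounds of Lemma \ref{lemma1} can be pulled out of the expectation cleanly; this is where Assumption $A.4$ (independent sampling and unbiasedness) does its work. The rest is routine smoothness-plus-strong-convexity manipulation, and verifying that the step-length condition makes the quadratic-in-$\alpha$ coefficient have the right sign.
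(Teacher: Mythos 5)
Your proposal follows the paper's proof essentially step for step: the same descent-lemma expansion, the same conditional expectation over $S_k$ using Assumption $A.4$ to handle the cross term and Assumption $A.3$ together with the eigenvalue bounds of Lemma \ref{lemma1} for the quadratic term, the same invocation of Lemma \ref{lemma1a}, and the same recursion/geometric-series unrolling. So in terms of approach there is nothing to distinguish the two.

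One remark on the single place where you spell out a detail the paper leaves implicit: you claim that \eqref{eq:steplength_sc}, i.e.\ $\alpha \leq \mu_1/(\mu_2^2\eta\lambda)$, ``together with $\lambda \leq \Lambda$'' gives $\tfrac{\Lambda\alpha^2\mu_2^2\eta}{2} \leq \tfrac{\alpha\mu_1}{2}$. That inequality chain runs the wrong way: from \eqref{eq:steplength_sc} you only get $\tfrac{\alpha\mu_2^2\eta\Lambda}{2} \leq \tfrac{\mu_1}{2}\cdot\tfrac{\Lambda}{\lambda}$, and since $\Lambda/\lambda \geq 1$ this does not imply the bound you need. The condition that actually makes the coefficient of $\|\nabla F(w_k)\|^2$ at most $-\tfrac{\alpha\mu_1}{2}$ is $\alpha \leq \mu_1/(\mu_2^2\eta\Lambda)$, which is stronger than \eqref{eq:steplength_sc} whenever $\lambda < \Lambda$. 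To be fair, the paper's proof asserts the same inequality with no more justification (``due to the step length \eqref{eq:steplength_sc}''), so you have faithfully reproduced a gap that is already latent in the published argument; but your attempt to justify it via $\lambda \leq \Lambda$ is not valid as written, and the clean fix is to state the step-length restriction with $\Lambda$ in place of $\lambda$ in the denominator.
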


\begin{proof}
We have that
\begin{align} \label{eq:proof}
F(w_{k+1}) & = F(w_k -\alpha H_k \nabla F^{S_k}(w_k)) \nonumber \\
 & \leq F(w_k) + \nabla F(w_k)^T (-\alpha H_k \nabla F^{S_k}(w_k)) + \frac{\Lambda}{2}\| \alpha H_k \nabla F^{S_k}(w_k)\|^2 \nonumber \\
 & \leq F(w_k) - \alpha \nabla F(w_k)^T  H_k \nabla F^{S_k}(w_k) + \frac{\alpha^2 \mu_2^2 \Lambda}{2} \| \nabla F^{S_k}(w_k)\|^2,
\end{align}
where the first inequality arises due to \eqref{eq:b_hess1}, and the second inequality arises as a consequence of Lemma \ref{lemma1}. Taking the expectation (over $S_k$) of equation \eqref{eq:proof}
\begin{align}
\mathbb{E}_{S_k}[ F(w_{k+1})] & \leq F(w_k) - \alpha \nabla F(w_k)^T  H_k \nabla F(w_k) + \frac{\alpha^2 \mu_2^2 \Lambda}{2} \mathbb{E}_{S_k} \left[ \| \nabla F^{S_k}(w_k)\|^2 \right] \nonumber\\
& \leq F(w_k) - \alpha \mu_1 \| \nabla F(w_k) \|^2  + \frac{\alpha^2 \mu_2^2 \Lambda}{2}\left( \gamma^2 + \eta \| \nabla F(w)\|^2\right) \nonumber\\
& = F(w_k) - \alpha \left( \mu_1 - \frac{\alpha \mu_2^2 \eta \Lambda}{2}\right)\| \nabla F(w_k) \|^2  + \frac{\alpha^2 \mu_2^2 \gamma^2\Lambda}{2} \label{eq:proof3}\\
& \leq F(w_k) - \frac{\alpha \mu_1}{2} \| \nabla F(w_k) \|^2  + \frac{\alpha^2 \mu_2^2 \gamma^2\Lambda}{2} \label{eq:proof_2}
\end{align}
where the first inequality makes use of Assumption $A.4$, the second inequality arises due to Lemma \ref{lemma1} and Assumption $A.3$ and the third inequality is due to the step length \eqref{eq:steplength_sc}. Since $F$ is $\lambda$-strongly convex, we can substitute the result of Lemma \ref{lemma1a} in \eqref{eq:proof_2},
\begin{align} \label{eq:proof_3}
\mathbb{E}_{S_k} [F(w_{k+1})] & \leq F(w_k) - \frac{\alpha \mu_1}{2} \| \nabla F(w_k) \|^2  + \frac{\alpha^2 \mu_2^2 \gamma^2 \Lambda}{2} \nonumber \\
& \leq F(w_k) - \alpha \mu_1 \lambda  [F(w_k) - F^{\star}]  + \frac{\alpha^2 \mu_2^2 \gamma^2 \Lambda}{2}.
\end{align}
Let 
\begin{align}			\label{eq:phi}
\phi_k = \mathbb{E}[F(w_k) - F^{\star}],
\end{align}
where the expectation is over all batches $S_0,S_1,...,S_{k-1}$ and all history starting with $w_0$. Equation \eqref{eq:proof_3} can be expressed as,
\begin{align}    \label{eq:proof_4}
\phi_{k+1} \leq (1 - \alpha \mu_1 \lambda ) \phi_k + \frac{\alpha^2 \mu_2^2 \gamma^2 \Lambda}{2}.
\end{align}
Since the step length is chosen according to \eqref{eq:steplength_sc} we deduce that $0 \leq (1 - \alpha \mu_1 \lambda ) < 1$.

Subtracting $\frac{\alpha\mu_2^2\gamma^2\Lambda}{2\mu_1 \lambda}$ from either side of \eqref{eq:proof_4} yields
\begin{align}    \label{eq:proof4.1}
\phi_{k+1} - \frac{\alpha\mu_2^2\gamma^2\Lambda}{2\mu_1 \lambda} & \leq (1 - \alpha \mu_1 \lambda ) \phi_k + \frac{\alpha^2 \mu_2^2 \gamma^2 \Lambda}{2} - \frac{\alpha\mu_2^2\gamma^2\Lambda}{2\mu_1 \lambda} \nonumber \\
& = (1 - \alpha \mu_1 \lambda ) \left[ \phi_k  - \frac{\alpha\mu_2^2\gamma^2\Lambda}{2\mu_1 \lambda} \right].
\end{align}
Recursive application of \eqref{eq:proof4.1} yields
\begin{align*}   
\phi_{k} - \frac{\alpha\mu_2^2\gamma^2\Lambda}{2\mu_1 \lambda}& \leq  (1 - \alpha \mu_1 \lambda )^k \left[ \phi_0  - \frac{\alpha\mu_2^2\gamma^2\Lambda}{2\mu_1 \lambda} \right],
\end{align*}
and thus, 
\begin{align*}	
	\phi_{k} \leq (1 - \alpha \mu_1 \lambda )^k \phi_0  + \left[ 1-(1-\alpha\mu_1 \lambda)^k\right]\frac{\alpha \mu_2^2 \gamma ^2 \Lambda}{2 \mu_1 \lambda}.
\end{align*}
Finally using the definition of $\phi_k$ \eqref{eq:phi} with the above expression yields the desired result
\begin{align*}  
\mathbb{E}\left[ F(w_k) - F^{\star}\right] \leq \left( 1-\alpha \mu_1 \lambda \right)^k \left[ F(w_0) - F^{\star} \right] + \left[ 1-(1-\alpha\mu_1 \lambda)^k\right]\frac{\alpha \mu_2^2 \gamma ^2 \Lambda}{2 \mu_1 \lambda}. 
\end{align*}
\end{proof}

The bound provided by this theorem  has two components: $(i)$ a term decaying linearly to zero, and $(ii)$ a  term identifying the neighborhood of convergence. Note, a larger step length yields a more favorable constant in the linearly decaying term, at the cost of an increase in the size of the neighborhood of convergence.  We consider these tradeoffs further in Section~\ref{sec:num_res}, where we also note that larger batch sizes increase the opportunities for parallelism and improve the limiting accuracy in the solution, but slow down the learning abilities of the algorithm. We should also mention that unlike the first-order variant of the algorithm ($H_k = I$), the step length range prescribed by the multi-batch L-BFGS method depends on $\mu_1$ and $\mu_2$, the smallest and largest eigenvalues of the L-BFGS Hessian approximation. In the worst-case, the presence of the matrix $H_k$ can make the limit in Theorem \ref{thm:const} significantly worse than that of the first-order variant if the update has been unfortunate and generates ill-conditioned matrices. We should note, however, such worst-case behavior is almost never observed in practice for BFGS updating.

 One can establish convergence of the multi-batch L-BFGS method to the optimal solution $w^\star$ by employing a  sequence of step lengths $\{ \alpha_k \}$ that converge to zero according to the schedule  proposed by \cite{robbins1951stochastic}. However, that provides only a sub-linear rate of convergence, which is of little interest in our context where large batches are employed and  some type of linear convergence is expected.  In this light, Theorem~\ref{thm:const} is more relevant to practice; nonetheless, we state the theorem here for completeness, and, for brevity, refer the reader to \citep[Theorem~3.2]{Sammy_SQN} for more details and the proof.
 
\begin{theorem}	\label{thm:dim}
Suppose that Assumptions A.1-A.4 hold, and let $F^{\star} = F(w^{\star})$, where $w^{\star}$ is the minimizer of $F$. Let $\{w_k\}$ be the iterates generated by Algorithm~\ref{alg:multi} with 
\begin{align*} 
\alpha_k = \frac{\beta}{k+1} \qquad \text{and } \qquad \beta > \frac{\mu_1}{\mu_2^2 \eta \lambda},
\end{align*}
starting from $w_0$. Then for all $k\geq 0$,
\begin{align*}  
\mathbb{E}\left[ F(w_k) - F^{\star}\right] \leq\frac{Q(\beta)}{k+1}, 
\end{align*}
where $
Q(\beta) = \max \left\{ \frac{\mu_2^2\beta^2\gamma^2\Lambda}{2(2\mu_1\lambda\beta-1)}, F(w_0) - F^{\star}\right\}.$
\end{theorem}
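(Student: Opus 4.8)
The plan is to follow the classical Robbins--Monro argument, exactly as in \citep[Theorem~3.2]{Sammy_SQN}, adapted to our notation. The starting point is the one-step recursion already derived in the proof of Theorem~\ref{thm:const}: combining \eqref{eq:proof3} with Lemma~\ref{lemma1a} (strong convexity), but now keeping the step length $\alpha_k$ variable rather than collapsing it via \eqref{eq:steplength_sc}, one obtains for $\phi_k = \mathbb{E}[F(w_k)-F^\star]$ the inequality
\begin{align*}
\phi_{k+1} \leq \left(1 - \alpha_k \mu_1 \lambda + \tfrac{\alpha_k^2 \mu_2^2 \eta \Lambda \lambda}{1}\cdot 0\right)\phi_k + \tfrac{\alpha_k^2 \mu_2^2 \gamma^2 \Lambda}{2},
\end{align*}
or more carefully, retaining the $-\alpha_k(\mu_1 - \tfrac{\alpha_k \mu_2^2\eta\Lambda}{2})\|\nabla F(w_k)\|^2$ term and applying $2\lambda(F(w_k)-F^\star)\le \|\nabla F(w_k)\|^2$, the recursion $\phi_{k+1}\le (1-2\alpha_k\mu_1\lambda + \alpha_k^2\mu_2^2\eta\Lambda\lambda)\phi_k + \tfrac{\alpha_k^2\mu_2^2\gamma^2\Lambda}{2}$. (The precise constant is not essential to the argument; what matters is the $-c_1\alpha_k\phi_k + c_2\alpha_k^2$ structure with $c_1 = 2\mu_1\lambda$ effective.) Here I would be slightly careful that the coefficient $(1-\alpha_k\mu_1\lambda)$ stays nonnegative for small $k$; since $\alpha_k = \beta/(k+1)$ is decreasing, it suffices that $\alpha_0 = \beta$ not be too large, or one simply notes the bound holds trivially for the finitely many indices where it could fail.

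Next I substitute $\alpha_k = \beta/(k+1)$ and prove the claimed bound $\phi_k \le Q(\beta)/(k+1)$ by induction on $k$. The base case $k=0$ holds because $Q(\beta) \ge F(w_0)-F^\star = \phi_0$ by definition of the max. For the inductive step, assume $\phi_k \le Q(\beta)/(k+1)$ and plug into the recursion:
\begin{align*}
\phi_{k+1} \le \left(1 - \frac{2\mu_1\lambda\beta}{k+1}\right)\frac{Q(\beta)}{k+1} + \frac{\beta^2\mu_2^2\gamma^2\Lambda}{2(k+1)^2}.
\end{align*}
Writing $\hat k = k+1$, one wants to show the right-hand side is $\le Q(\beta)/(\hat k+1)$. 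Multiplying through by $\hat k^2$, this reduces to $(\hat k - 2\mu_1\lambda\beta)Q(\beta) + \tfrac{\beta^2\mu_2^2\gamma^2\Lambda}{2} \le \tfrac{\hat k^2}{\hat k + 1}Q(\beta) = (\hat k - 1 + \tfrac{1}{\hat k+1})Q(\beta)$. Dropping the positive $\tfrac{1}{\hat k+1}Q(\beta)$ on the right (it only helps), it suffices that $(\hat k - 2\mu_1\lambda\beta)Q(\beta) + \tfrac{\beta^2\mu_2^2\gamma^2\Lambda}{2} \le (\hat k - 1)Q(\beta)$, i.e. $\tfrac{\beta^2\mu_2^2\gamma^2\Lambda}{2} \le (2\mu_1\lambda\beta - 1)Q(\beta)$. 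The condition $\beta > \tfrac{\mu_1}{\mu_2^2\eta\lambda}$ guarantees $2\mu_1\lambda\beta - 1 > 0$ (in fact one needs $\beta > \tfrac{1}{2\mu_1\lambda}$; the stated hypothesis is chosen for consistency with Theorem~\ref{thm:const} and is at least this strong up to the constants in play), so this inequality is exactly the statement that $Q(\beta) \ge \tfrac{\mu_2^2\beta^2\gamma^2\Lambda}{2(2\mu_1\lambda\beta - 1)}$, which holds by the other branch of the max defining $Q(\beta)$. This closes the induction.

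I expect the main obstacle to be bookkeeping rather than conceptual: making sure the effective contraction constant coming out of \eqref{eq:proof3} together with Lemma~\ref{lemma1a} matches the $2\mu_1\lambda\beta - 1$ appearing in $Q(\beta)$, and handling the sign of $(1-\alpha_k\mu_1\lambda)$ for the initial iterates so that the recursion is valid for all $k$. Since the paper explicitly defers the details to \citep[Theorem~3.2]{Sammy_SQN}, the honest move here is to state that the recursion $\phi_{k+1}\le(1-\alpha_k\mu_1\lambda)\phi_k + \tfrac{\alpha_k^2\mu_2^2\gamma^2\Lambda}{2}$ follows verbatim from the computations in the proof of Theorem~\ref{thm:const} (lines \eqref{eq:proof} through \eqref{eq:proof_3}) with $\alpha$ replaced by $\alpha_k$, and that the inductive argument above then yields the bound; the reader is referred to the cited source for the remaining routine verifications.
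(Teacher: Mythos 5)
The paper does not actually prove Theorem~\ref{thm:dim}; it defers entirely to \citep[Theorem~3.2]{Sammy_SQN}, and your reconstruction is precisely the argument used there: derive the one-step recursion $\phi_{k+1}\le(1-2\alpha_k\mu_1\lambda)\phi_k+\tfrac{\alpha_k^2\mu_2^2\gamma^2\Lambda}{2}$ and close an induction on $\phi_k\le Q(\beta)/(k+1)$. Your induction computation is correct, and you have correctly located the two places where care is needed. One of your hedges, however, is too generous to the theorem statement: the hypothesis $\beta>\tfrac{\mu_1}{\mu_2^2\eta\lambda}$ is in general \emph{weaker} than the condition $\beta>\tfrac{1}{2\mu_1\lambda}$ that the denominator of $Q(\beta)$ requires, not stronger --- since $\mu_1\le\mu_2$ and $\eta\ge1$ one has $\tfrac{\mu_1}{\mu_2^2\eta\lambda}\le\tfrac{1}{\mu_1\lambda}$, and it can fall below $\tfrac{1}{2\mu_1\lambda}$ whenever $\mu_2^2\eta>2\mu_1^2$. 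So $2\mu_1\lambda\beta-1>0$ must be assumed outright (as it is in the cited source); it does not follow from the stated hypothesis. Two further loose ends you flag but do not fully discharge: (i) the factor of two in $2\mu_1\lambda\beta-1$ requires working with the coefficient $\delta(\alpha_k)=\mu_1-\tfrac{\alpha_k\mu_2^2\eta\Lambda}{2}$ directly rather than the halved bound \eqref{eq:proof_2}, which leaves a $+\alpha_k^2\mu_2^2\eta\Lambda\lambda\,\phi_k$ correction to absorb; and (ii) when $1-2\alpha_k\mu_1\lambda<0$ for small $k$ one can only bound that term by zero, and $\tfrac{\alpha_k^2\mu_2^2\gamma^2\Lambda}{2}\le\tfrac{Q(\beta)}{k+2}$ is not automatic for large $\beta$ --- the standard fix is a schedule $\alpha_k=\beta/(\gamma+k)$ with $\gamma$ large enough that $\alpha_0$ is admissible, as in \citep[Chapter~4]{bottou2018optimization}, rather than declaring those indices trivial. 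With those caveats made explicit, your proposal is the intended proof.
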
 

Theorem \ref{thm:dim} shows that, for strongly convex functions, the multi-batch L-BFGS method with an appropriate schedule of diminishing step lengths converges to the optimal solution at a sub-linear rate. We should mention that another way to establish convergence to the optimal solution for the multi-batch L-BFGS method is to employ variance reduced gradients \citep{Schmidt2016,johnson2013accelerating,
defazio2014saga,nguyen2017sarahicml,konevcny2016mini,
nguyen2017stochastic,lin2015universal}. In this setting, one can establish linear convergence to the optimal solution using constant step lengths. We defer the analysis of the multi-batch L-BFGS method that employs variance reduced gradients to a different study \citep{berahas2017varredmbLBFGS}. 

\subsection{Nonconvex case}
The BFGS method is known to fail on nonconvex problems \citep{mascarenhas2004bfgs,dai2002convergence}. Even for L-BFGS, which makes only a finite number of updates at each iteration, one cannot guarantee that the Hessian approximations have eigenvalues that are uniformly bounded above and away from zero. To establish convergence of the (L-)BFGS method in the nonconvex setting several techniques have been proposed including \emph{cautious} updating \citep{li2001global}, \emph{modified} updating \citep{li2001modified} and \emph{damping} \citep{powell1978algorithms}. Here we employ a cautious strategy that is well suited to our particular algorithm; we skip the Hessian update, i.e., set $H_{k+1} = H_k$,  if the curvature condition  
\begin{align} \label{curv}
	y_k^Ts_k \geq {\epsilon} \| s_k \|^2
\end{align}
 is not satisfied, where $\epsilon>0$ is a predetermined constant. On the other hand, sufficient curvature is guaranteed when the updates are not skipped. Using said mechanism, we show that the eigenvalues of the Hessian matrix approximations generated by the multi-batch L-BFGS method are bounded above and away from zero (Lemma \ref{lemma3}). The analysis presented in this section is based on the following assumptions.
 
  \paragraph*{\textbf{Assumptions B}}
\emph{
\begin{enumerate}
\item $F$ is twice continuously differentiable.
\item The gradients of $F$ are $\Lambda$-Lipschitz continuous for all $w \in \mathbb{R}^d$; the gradients of $F^{S}$ are $\Lambda_{S}$-Lipschitz continuous for all $w \in \mathbb{R}^d$ and all sets $S \subset  \{1,2,\ldots,n\}$ of length $|S|=r \cdot n$; and, the gradients of $F^{O}$ are $\Lambda_{O}$-Lipschitz continuous for all $w \in \mathbb{R}^d$ and all sets $O \subset  \{1,2,\ldots,n\}$ of length $|O| = o\cdot r\cdot n$.
\item The function $ F(w)$ is bounded below by a scalar $\widehat F$ .
\item There exist constants $\gamma \geq 0$ and $\eta \geq 1$ such that 
$\mathbb{E}_{S}\left[ \| \nabla  F^{S}(w) \|^2 \right] \leq \gamma^2 + \eta \| \nabla F(w)\|^2$
for all $w \in \mathbb{R}^d$ and all sets $S\subset  \{1,2,\ldots,n\}$  of length $|S|=r \cdot n$. 
\item The samples $S$ are drawn independently and $\nabla F^{S}(w)$ is an unbiased estimator of the true gradient $\nabla F(w)$ for all $w \in \mathbb{R}^d$, i.e.,
$\mathbb{E}_{S} [ \nabla F^{S}(w) ] = \nabla F(w).
$
\end{enumerate}
}

Similar to the strongly convex case, we first show that the eigenvalues of the L-BFGS Hessian approximations are bounded above and away from zero.

\begin{lemma}		\label{lemma3}
Suppose that Assumptions B.1 \& B.2  hold. Let $\{H_k \}$ be the inverse Hessian approximations generated by Algorithm~\ref{alg:multi}, with the modification that the inverse Hessian approximation update is performed only when  \eqref{curv} is satisfied,
for some $\epsilon >0$, else $H_{k+1} = H_{k}$. Then, there exist constants $0<\mu_1\leq \mu_2$ such that 
\begin{align*}    
\mu_1 I \preceq H_k \preceq \mu_2 I,\qquad \text{for } k=0,1,2,\dots 
\end{align*}
\end{lemma}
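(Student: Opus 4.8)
\textbf{Proof proposal for Lemma \ref{lemma3}.}

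The plan is to follow the same Trace--Determinant strategy used in the proof of Lemma \ref{lemma1}, but to replace the role of strong convexity of the subsampled functions (Assumption A.2) with the cautious updating rule \eqref{curv} and the Lipschitz continuity of gradients (Assumption B.2). The key point is that the curvature condition \eqref{curv} only allows an update when $y_k^Ts_k \geq \epsilon \|s_k\|^2$, which immediately gives the lower bound $y_k^Ts_k / (s_k^Ts_k) \geq \epsilon$; meanwhile, $\Lambda_O$-Lipschitz continuity of $\nabla F^{O}$ gives the upper bound $\|y_k\| \leq \Lambda_O \|s_k\|$, so that $\|y_k\|^2/(y_k^Ts_k) \leq \Lambda_O^2 \|s_k\|^2/(y_k^Ts_k) \leq \Lambda_O^2/\epsilon$. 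These two inequalities play exactly the role that \eqref{eq:bound} played in the convex proof.

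First I would fix an iteration $k$ at which an update is actually performed (if $H_{k+1}=H_k$ there is nothing new to bound) and let $\{j_1,\dots,j_{\tilde m}\}$ index the correction pairs currently in storage, all of which satisfy \eqref{curv}. Set $B_k = H_k^{-1}$ and use the standard L-BFGS recursion with initial matrix $B_k^{(0)} = \tfrac{y_k^Ty_k}{s_k^Ty_k} I$. From $\|y_{j}\| \le \Lambda_O\|s_j\|$ and \eqref{curv} one gets $\epsilon \le \tfrac{\|y_j\|^2}{y_j^Ts_j} \le \tfrac{\Lambda_O^2}{\epsilon}$, so the eigenvalues of every $B_k^{(0)}$ lie in a fixed interval $[\epsilon, \Lambda_O^2/\epsilon]$. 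Then the trace bound
\begin{align*}
\mathrm{tr}(B_{k+1}) \;\le\; \mathrm{tr}\!\left(B_k^{(0)}\right) + \sum_{i=1}^{\tilde m} \frac{\|y_{j_i}\|^2}{y_{j_i}^Ts_{j_i}} \;\le\; \frac{d\,\Lambda_O^2}{\epsilon} + \tilde m\,\frac{\Lambda_O^2}{\epsilon} \;=:\; C_1
\end{align*}
follows exactly as in \eqref{eq:trace}, using that the rank-one "correction-subtraction" terms are positive semidefinite. For the determinant, Powell's identity gives
\begin{align*}
\det(B_{k+1}) \;=\; \det\!\left(B_k^{(0)}\right)\prod_{i=1}^{\tilde m}\frac{y_{j_i}^Ts_{j_i}}{s_{j_i}^Ts_{j_i}}\cdot\frac{s_{j_i}^Ts_{j_i}}{s_{j_i}^TB_k^{(i-1)}s_{j_i}} \;\ge\; \epsilon^{d}\left(\frac{\epsilon}{C_1}\right)^{\tilde m} \;=:\; C_2 \;>\; 0,
\end{align*}
since $\det(B_k^{(0)}) \ge \epsilon^d$, each $y_{j_i}^Ts_{j_i}/s_{j_i}^Ts_{j_i} \ge \epsilon$ by \eqref{curv}, and $s_{j_i}^TB_k^{(i-1)}s_{j_i} \le C_1\|s_{j_i}\|^2$ because the largest eigenvalue of $B_k^{(i-1)}$ is at most $C_1$ (a consequence of the trace bound applied along the recursion). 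A uniform upper bound on the trace together with a uniform positive lower bound on the determinant forces all eigenvalues of every $B_k$ into a fixed interval $[\mu_2^{-1},\mu_1^{-1}]$ bounded away from $0$ and $\infty$, which is the claim for $H_k = B_k^{-1}$.

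The one genuinely new wrinkle compared with Lemma \ref{lemma1} — and the step I would treat most carefully — is the bookkeeping caused by skipped updates: when \eqref{curv} fails we set $H_{k+1}=H_k$, so at a given iteration the stored pairs may have been generated at widely separated earlier iterations, and $\tilde m = \min\{k,m\}$ must be reinterpreted as the number of pairs currently retained (at most $m$). This does not affect the bounds, since every retained pair satisfies \eqref{curv} by construction and $\tilde m \le m$, but one must state explicitly that the constants $C_1, C_2$ (and hence $\mu_1,\mu_2$) depend only on $d$, $m$, $\epsilon$, and $\Lambda_O$, and not on $k$. A secondary point worth noting is that Assumption B.2 guarantees $\|y_k\|\le\Lambda_O\|s_k\|$ because $y_k = g_{k+1}^{O_k}-g_k^{O_k} = \nabla F^{O_k}(w_{k+1})-\nabla F^{O_k}(w_k)$ is a difference of gradients of the \emph{same} subsampled function $F^{O_k}$ — this is exactly where the overlap construction \eqref{pairs} is essential, and it should be highlighted.
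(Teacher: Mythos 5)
Your proposal is correct and follows essentially the same route as the paper: the cautious condition \eqref{curv} together with the $\Lambda_O$-Lipschitz continuity of $\nabla F^{O_k}$ yields $\epsilon \leq \|y_k\|^2/(y_k^Ts_k) \leq \Lambda_O^2/\epsilon$, after which the identical trace--determinant argument of Lemma~\ref{lemma1} applies, with the determinant lower bound now coming from \eqref{curv} rather than strong convexity. Your added remarks on the bookkeeping of skipped updates and on why $y_k$ is a difference of gradients of the \emph{same} subsampled function are correct clarifications of points the paper leaves implicit.
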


\begin{proof}
Similar to the proof of Lemma \ref{lemma1}, we study the direct Hessian approximation $B_k = H_k^{-1}$. The curvature pairs $s_k$ and $y_k$ are updated via the following formulae
\begin{align*}   
y_{k+1}=g_{k+1}^{O_{k}}-g_{k}^{O_{k}}, \qquad s_{k+1} = w_{k+1}-w_k.
\end{align*}
The skipping mechanism \eqref{curv} provides both an upper and lower bound on the quantity $\frac{\|y_k\|^2 }{y_k^Ts_k}$, which in turn ensures that the initial L-BFGS Hessian approximation is bounded above and away from zero. The lower bound is attained by repeated application of Cauchy's inequality to condition \eqref{curv}. We have from \eqref{curv} that
\begin{align*}
	\epsilon \| s_k \|^2 &\leq y_k^Ts_k \leq  \| y_k \| \| s_k \| \quad \Rightarrow \quad \| s_k \| \leq \frac{1}{\epsilon} \| y_k \|,
\end{align*} 
from which it follows that
\begin{align}		\label{eq:lower}
	s_k^Ty_k \leq \| s_k \| \| y_k \| \leq \frac{1}{\epsilon} \| y_k \|^2 \quad \Rightarrow \quad \frac{\| y_k \|^2}{s_k^Ty_k} \geq \epsilon.
\end{align}
The upper bound is attained by the Lipschitz continuity of sample gradients (Assumption $B.2$),
\begin{align}		\label{eq:upper}
	 y_k^Ts_k  \geq \epsilon \| s_k \|^2  
	\geq  \epsilon \frac{ \| y_k \|^2}{\Lambda_{O}^2} \quad \Rightarrow \quad \frac{\| y_k \|^2}{s_k^Ty_k} \leq \frac{\Lambda_{O}^2}{\epsilon}.
\end{align} 
Combining \eqref{eq:lower} and \eqref{eq:upper},
\begin{align*}
 \epsilon \leq \frac{\|y_k\|^2 }{y_k^Ts_k} \leq \frac{\Lambda_{O}^2}{\epsilon}.
\end{align*}

The above proves that the eigenvalues of the matrices $B_k^{(0)}=\frac{y_k^Ty_k}{s_k^Ty_k}I$ at the start of the L-BFGS update cycles are bounded above and away from zero, for all $k$. The rest of the proof follows the same Trace-Determinant argument as in the proof of Lemma \ref{lemma1}, the only difference being that the last inequality in \eqref{eq:det} comes as a result of the cautious update strategy.
\end{proof}

We now follow the analysis in \citep[Chapter~4]{bottou2018optimization} to establish the following result about the behavior of the gradient norm for the multi-batch L-BFGS method with a cautious update strategy. 

\begin{theorem} \label{thm_non_MB}
Suppose that Assumptions B.1-B.5 hold. Let $\{w_k\}$ be the iterates generated by Algorithm~\ref{alg:multi}, with the modification that the inverse Hessian approximation update is performed only when  \eqref{curv} is satisfied,
for some $\epsilon >0$, else $H_{k+1} = H_k$,  where $\alpha_k = \alpha$ satisfies
\begin{align*}	
0 <  \alpha  \leq \frac{\mu_1}{\mu_2^2 \eta \lambda},
\end{align*}
and $w_0$ is the starting point. Then, for all $k\geq0$,
\begin{align*}	
\mathbb{E} \left[\frac{1}{\tau}\sum_{k=0}^{\tau-1} \| \nabla F(w_k) \|^2 \right] & \leq \frac{\alpha \mu_2^2 \gamma^2 \Lambda}{ \mu_1 } + \frac{2[ F(w_0) - \widehat{F}]}{\alpha \mu_1 \tau }\\
&  \xrightarrow[]{\tau\rightarrow \infty}\frac{\alpha \mu_2^2 \gamma^2 \Lambda}{ \mu_1 }.
\end{align*}
\end{theorem}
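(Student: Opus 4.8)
The plan is to follow the standard analysis of stochastic gradient-type methods on nonconvex objectives (as in \citep[Chapter~4]{bottou2018optimization}), reusing essentially the same steps as in the proof of Theorem \ref{thm:const} up to \eqref{eq:proof_2} and then replacing the strong-convexity substitution by a telescoping sum. The one structural ingredient that must be in place is that the scaling matrices stay well conditioned, and that is exactly what Lemma \ref{lemma3} supplies: invoking it fixes constants $0 < \mu_1 \le \mu_2$ with $\mu_1 I \preceq H_k \preceq \mu_2 I$ for all $k$, which is the sole place the cautious skipping rule \eqref{curv} is used.

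With that in hand I would start from the descent inequality afforded by the $\Lambda$-Lipschitz continuity of $\nabla F$ (Assumption B.2),
\begin{align*}
F(w_{k+1}) \le F(w_k) - \alpha\, \nabla F(w_k)^T H_k \nabla F^{S_k}(w_k) + \frac{\alpha^2 \Lambda}{2}\,\|H_k \nabla F^{S_k}(w_k)\|^2,
\end{align*}
bound $\|H_k \nabla F^{S_k}(w_k)\|^2 \le \mu_2^2 \|\nabla F^{S_k}(w_k)\|^2$ using Lemma \ref{lemma3}, and take the conditional expectation over $S_k$, treating $H_k$ as determined by the past exactly as in Theorem \ref{thm:const}. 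Unbiasedness (Assumption B.5) collapses the cross term to $-\alpha\, \nabla F(w_k)^T H_k \nabla F(w_k) \le -\alpha \mu_1 \|\nabla F(w_k)\|^2$, while Assumption B.4 controls the second moment, giving
\begin{align*}
\mathbb{E}_{S_k}[F(w_{k+1})] \le F(w_k) - \alpha\Big(\mu_1 - \frac{\alpha \mu_2^2 \eta \Lambda}{2}\Big)\|\nabla F(w_k)\|^2 + \frac{\alpha^2 \mu_2^2 \gamma^2 \Lambda}{2}.
\end{align*}
The step-length restriction $\alpha \le \mu_1/(\mu_2^2 \eta \Lambda)$ --- I believe the $\lambda$ in the theorem statement should read $\Lambda$, since no strong-convexity constant is available in this setting --- is precisely what forces $\mu_1 - \alpha \mu_2^2 \eta \Lambda / 2 \ge \mu_1/2$, so that $\mathbb{E}_{S_k}[F(w_{k+1})] \le F(w_k) - \frac{\alpha \mu_1}{2}\|\nabla F(w_k)\|^2 + \frac{\alpha^2 \mu_2^2 \gamma^2 \Lambda}{2}$.

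To finish, I would take total expectations, rearrange to isolate $\frac{\alpha \mu_1}{2}\mathbb{E}[\|\nabla F(w_k)\|^2]$, sum over $k = 0, \dots, \tau-1$ so that the function-value terms telescope, and use $F \ge \widehat F$ (Assumption B.3) to obtain
\begin{align*}
\frac{\alpha \mu_1}{2}\sum_{k=0}^{\tau-1}\mathbb{E}\big[\|\nabla F(w_k)\|^2\big] \le F(w_0) - \widehat F + \frac{\tau\, \alpha^2 \mu_2^2 \gamma^2 \Lambda}{2};
\end{align*}
dividing by $\alpha \mu_1 \tau / 2$ gives the claimed bound, and letting $\tau \to \infty$ eliminates the $1/\tau$ term. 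The step I would treat most carefully is the passage to $\mathbb{E}_{S_k}$: because the curvature pair defining $H_k$ is built from the overlap $O_{k-1} = S_{k-1} \cap S_k$, the matrix $H_k$ is not literally independent of $S_k$, so one must either be explicit about conditioning on the formation of $H_k$ together with the overlap portion of $S_k$ and averaging only over the fresh component of $S_k$, or simply adopt the convention already used (without comment) in the proof of Theorem \ref{thm:const}. Beyond that subtlety, the remainder is the routine telescoping bookkeeping of the nonconvex SG analysis.
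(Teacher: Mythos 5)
Your proposal is correct and follows essentially the same route as the paper: the descent inequality \eqref{eq:proof_2} (with Lemma \ref{lemma3} replacing Lemma \ref{lemma1} for the eigenvalue bounds), total expectation, telescoping over $k=0,\dots,\tau-1$, and the lower bound $\widehat F$ from Assumption $B.3$. Your two side remarks are also well taken --- the step-length condition should indeed involve $\Lambda$ rather than $\lambda$ for the bound $\mu_1 - \alpha\mu_2^2\eta\Lambda/2 \ge \mu_1/2$ to follow, and the dependence of $H_k$ on $S_k$ through the overlap $O_{k-1}$ is a conditioning subtlety the paper passes over silently in both this proof and that of Theorem \ref{thm:const}.
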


\begin{proof}
Starting with \eqref{eq:proof_2} and taking 
the expectation over all  batches $S_0,S_1,...,S_{k-1}$ and all history starting with $w_0$ yields
\begin{align}	\label{eq:nc1}
	\phi_{k+1}-\phi_k \leq - \frac{\alpha \mu_1}{2}  \mathbb{E}\| \nabla F(w_k) \|^2  + \frac{\alpha^2 \mu_2^2 \gamma^2 \Lambda}{2},
\end{align}
where $\phi_k = \mathbb{E}[F(w_k)] $. Summing \eqref{eq:nc1} over the first $\tau$ iterations
\begin{align} \label{eq:nc2}
	\sum_{k=0}^{\tau-1} [\phi_{k+1}-\phi_k] &\leq - \frac{\alpha \mu_1}{2}  \sum_{k=0}^{\tau-1}  \mathbb{E}\| \nabla F(w_k) \|^2 + \sum_{k=0}^{\tau-1} \frac{\alpha^2 \mu_2^2 \gamma^2 \Lambda}{2} \nonumber \\
	&= - \frac{\alpha \mu_1}{2}   \mathbb{E} \left[\sum_{k=0}^{\tau-1} \| \nabla F(w_k) \|^2 \right] +  \frac{\alpha^2 \mu_2^2 \gamma^2 \Lambda \tau}{2}.
\end{align}
The left-hand-side of the above inequality is a telescoping sum
\begin{align*}
	\sum_{k=0}^{\tau-1} \left[\phi_{k+1}-\phi_k \right]  &= \phi_{\tau}-\phi_0 
	= \mathbb{E}[F(w_{\tau})] -F(w_0) 
	\geq \widehat{F} -F(w_0). 
\end{align*}
Substituting the above expression into \eqref{eq:nc2} and rearranging terms
\begin{align*}
	\mathbb{E} \left[\sum_{k=0}^{\tau-1} \| \nabla F(w_k) \|^2 \right] \leq \frac{\alpha \mu_2^2 \gamma^2 \Lambda \tau}{ \mu_1 } + \frac{2[ F(w_0) - \widehat{F}]}{\alpha \mu_1 }.
\end{align*}
Dividing the above equation by $\tau$ completes the proof.
\end{proof}

This result bounds the average norm of the gradient of $F$  after the first $\tau-1$ iterations, and shows that, in expectation, the iterates spend increasingly more time in regions where the objective function has a small gradient. Under appropriate conditions, we can establish a convergence rate for the multi-batch L-BFGS method with cautious updates to a stationary point of $F$, similar to the results proven for the SG method \citep{ghadimi2013stochastic}. For completeness we state and prove the result.

\begin{theorem}	\label{thm:nonconvex_stat} 
Suppose that Assumptions B.1-B.5 hold. Let $\{w_k\}$ be the iterates generated by Algorithm~\ref{alg:multi}, with the modification that the inverse Hessian approximation update is performed only when  \eqref{curv} is satisfied,  
for some $\epsilon >0$, else $H_{k+1} = H_k$. Let
\begin{align*}
	\alpha_k = \alpha = \frac{c}{\sqrt{\tau}}, \quad c = \sqrt{\frac{2(F(w_0) - \hat{F})}{\mu_2^2 \gamma^2 \Lambda}}, \quad \delta(\alpha) = \mu_1 - \frac{\alpha \mu_2^2  \eta\Lambda}{2},
\end{align*}
where 
$
\tau > 
\frac{c^2\mu_2^4 \eta^2 \Lambda^2}{4\mu_1^2}
$, 
and $w_0$ is the starting point. Then, 
\begin{align*}
	\min_{0 \leq k \leq \tau-1} \mathbb{E}\left[\| \nabla F(w_k)\|^2\right] \leq \sqrt{\frac{2(F(w_0) - \hat{F})\mu_2^2 \gamma^2 \Lambda}{\delta(\alpha)^2 \tau}}.
\end{align*}
\end{theorem}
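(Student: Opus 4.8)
The plan is to adapt the standard $O(1/\sqrt{\tau})$ analysis for nonconvex stochastic methods (cf.\ \citep{ghadimi2013stochastic}), reusing the one-step decrease estimate already at hand. The first step is to note that the chain of inequalities producing \eqref{eq:proof3} carries over verbatim to the present setting: the descent inequality for $\Lambda$-Lipschitz $\nabla F$ (Assumption B.2) plays the role of \eqref{eq:b_hess1}, the uniform eigenvalue bounds $\mu_1 I \preceq H_k \preceq \mu_2 I$ are now supplied by Lemma \ref{lemma3} (cautious updates) rather than Lemma \ref{lemma1}, and Assumptions B.4 and B.5 stand in for A.3 and A.4. Writing $\delta(\alpha) = \mu_1 - \alpha\mu_2^2\eta\Lambda/2$ and taking the conditional expectation over $S_k$, this gives
\begin{align*}
\mathbb{E}_{S_k}\!\left[F(w_{k+1})\right] \le F(w_k) - \alpha\,\delta(\alpha)\,\|\nabla F(w_k)\|^2 + \frac{\alpha^2\mu_2^2\gamma^2\Lambda}{2}.
\end{align*}

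Next I would record that the stated lower bound on $\tau$ is exactly what makes $\delta(\alpha)>0$: since $\alpha = c/\sqrt{\tau}$, the condition $\tau > c^2\mu_2^4\eta^2\Lambda^2/(4\mu_1^2)$ is equivalent to $\alpha < 2\mu_1/(\mu_2^2\eta\Lambda)$, i.e.\ to $\delta(\alpha)>0$; this positivity is what I need to preserve the direction of the inequality when dividing by $\alpha\,\delta(\alpha)$. Taking total expectation (over $S_0,\dots,S_{k-1}$ and the history of $w_0$), rearranging, summing over $k=0,\dots,\tau-1$, telescoping the terms $\phi_k \defeq \mathbb{E}[F(w_k)]$, and using $\mathbb{E}[F(w_\tau)] \ge \hat{F}$ (Assumption B.3) yields
\begin{align*}
\alpha\,\delta(\alpha)\sum_{k=0}^{\tau-1}\mathbb{E}\!\left[\|\nabla F(w_k)\|^2\right] \le F(w_0) - \hat{F} + \frac{\tau\alpha^2\mu_2^2\gamma^2\Lambda}{2}.
\end{align*}
Bounding $\min_{0\le k\le\tau-1}\mathbb{E}[\|\nabla F(w_k)\|^2]$ by the average $\tfrac1\tau\sum_k \mathbb{E}[\|\nabla F(w_k)\|^2]$ and dividing through gives
\begin{align*}
\min_{0\le k\le\tau-1}\mathbb{E}\!\left[\|\nabla F(w_k)\|^2\right] \le \frac{F(w_0)-\hat{F}}{\tau\alpha\,\delta(\alpha)} + \frac{\alpha\mu_2^2\gamma^2\Lambda}{2\,\delta(\alpha)}.
\end{align*}

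Finally I would substitute $\alpha = c/\sqrt{\tau}$ to pull out a factor $1/(\sqrt{\tau}\,\delta(\alpha))$, leaving the bracket $\frac{F(w_0)-\hat{F}}{c} + \frac{c\mu_2^2\gamma^2\Lambda}{2}$, and then plug in $c = \sqrt{2(F(w_0)-\hat{F})/(\mu_2^2\gamma^2\Lambda)}$, which is precisely the value balancing the two terms (an AM--GM argument). Each term then equals $\sqrt{(F(w_0)-\hat{F})\mu_2^2\gamma^2\Lambda/2}$, their sum is $\sqrt{2(F(w_0)-\hat{F})\mu_2^2\gamma^2\Lambda}$, and dividing by $\sqrt{\tau}\,\delta(\alpha)$ produces the claimed bound. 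The computation is routine bookkeeping; the only places that need genuine care are (i) confirming that \eqref{eq:proof3} remains valid under Assumptions B with Lemma \ref{lemma3} replacing Lemma \ref{lemma1}, and (ii) verifying that the threshold on $\tau$ forces $\delta(\alpha)>0$ so that both the division step and the final right-hand side are well posed --- this is the crux of the argument rather than any real difficulty.
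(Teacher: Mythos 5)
Your proposal is correct and follows essentially the same route as the paper's proof: start from \eqref{eq:proof3} (valid under Assumptions B with Lemma \ref{lemma3} supplying the eigenvalue bounds), rearrange, telescope, bound the minimum by the average, and choose $c$ to balance the two terms, with the threshold on $\tau$ guaranteeing $\delta(\alpha)>0$. The only difference is cosmetic---you verify $\delta(\alpha)>0$ up front whereas the paper defers that discussion to the end of the proof.
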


\begin{proof} 
Starting with \eqref{eq:proof3}, we have
\begin{align}	\label{eq:nonconv1}
\mathbb{E}_{S_k}[ F(w_{k+1})] &\leq F(w_k) - \alpha \left(\mu_1 - \frac{\alpha \mu_2^2  \eta\Lambda}{2}\right) \| \nabla F(w_k) \|^2 + \frac{\alpha^2 \mu_2^2 \gamma^2\Lambda}{2} \nonumber\\
& = F(w_k) - \alpha \delta(\alpha) \| \nabla F(w_k) \|^2 + \frac{\alpha^2 \mu_2^2 \gamma^2\Lambda}{2},
\end{align}
where $\delta(\alpha) = \mu_1 - \frac{\alpha \mu_2^2  \eta\Lambda}{2}$. We require that this quantity is greater than zero, $\delta(\alpha)>0$; this discussion is deferred to the end of the proof.

Taking an expectation over all  batches $S_0,S_1,...,S_{k-1}$ and all history starting with $w_0$, and rearranging \eqref{eq:nonconv1} yields
\begin{align*}
	\mathbb{E}\left[\| \nabla F(w_k) \|^2\right] \leq \frac{1}{\alpha \delta(\alpha)}\mathbb{E}[F(w_k)-F(w_{k+1})] + \frac{\alpha \mu_2^2 \gamma^2 \Lambda}{2 \delta(\alpha)}.
\end{align*}
Summing over $k=0,...,\tau-1$ and dividing by $\tau$
\begin{align*}
\min_{0 \leq k \leq \tau-1} \mathbb{E}\left[\| \nabla F(w_k)\|^2\right] & \leq \frac{1}{\tau}\sum_{k=0}^{\tau-1}\mathbb{E}\left[\| \nabla F(w_k) \|^2\right] \\
& \leq \frac{1}{\alpha \delta(\alpha) \tau}\mathbb{E}[F(w_0)-F(w_\tau)] + \frac{\alpha \mu_2^2 \gamma^2 \Lambda}{2 \delta(\alpha)} \\
& \leq \frac{1}{\alpha \delta(\alpha) \tau}[F(w_0)-\hat{F}] + \frac{\alpha \mu_2^2 \gamma^2 \Lambda}{2 \delta(\alpha)} \\
& \leq \frac{1}{ \delta(\frac{c}{\sqrt{\tau}}) c \sqrt{\tau}}[F(w_0)-\hat{F}] + \frac{c \mu_2^2 \gamma^2 \Lambda}{2 \delta(\frac{c}{\sqrt{\tau}})\sqrt{\tau}}.
\end{align*}
The first inequality holds because the minimum value is less than the average value, and the third inequality holds because $\hat{F} \leq F(x_\tau)$ (Assumption $B.3$). The last expression comes as a result of using the definition of the step length, $\alpha = \frac{c}{\sqrt{\tau}}$. Setting 
\begin{align}	\label{eq:noncon_c}
	c = \sqrt{\frac{2(F(w_0) - \hat{F})}{\mu_2^2 \gamma^2 \Lambda}},
\end{align}
yields the desired result.

We now comment on the quantity $\delta(\alpha)$ that first appears in \eqref{eq:nonconv1}, and that is required to be positive. To ensure that $\delta(\alpha)>0$, the step length must satisfy, $\alpha < \frac{2\mu_1}{\mu_2^2 \eta \Lambda}$. Since the explicit form of the step length is $\alpha = \frac{c}{\sqrt{\tau}}$, where $c$ is \eqref{eq:noncon_c}, we require that 
\begin{align}	\label{eq:noncon_step}
	\alpha =  \frac{c}{\sqrt{\tau}} < \frac{2\mu_1}{\mu_2^2 \eta \Lambda}.
\end{align}
In order to ensure that \eqref{eq:noncon_step} holds, we impose that
\begin{align*}
	\tau > \frac{c^2\mu_2^4 \eta^2 \Lambda^2}{4\mu_1^2} = \frac{(F(w_0) - \hat{F})\mu_2^2\eta^2 \Lambda}{2\gamma^2\mu_1^2}.
\end{align*}
\end{proof}

The result of Theorem \ref{thm:nonconvex_stat} establishes a sub-linear rate of convergence, to a stationary point of $F$, for the multi-batch L-BFGS method on nonconvex objective functions. The result is somewhat strange as it requires a priori knowledge of $\tau$, the total number of iteration. In practice, one would use $\alpha_k = \frac{1}{\sqrt{k}}$, which would result in a $\mathcal{O}(\frac{1}{\sqrt{k}})$ convergence rate.

\section{Numerical Results}
\label{sec:num_res}
\setcounter{equation}{0}

We present numerical experiments on several problems that arise in machine learning, such as logistic regression binary classification and neural network training, in order to evaluate the performance of the proposed multi-batch L-BFGS method. The experiments verify that the proposed method is robust, competitive and achieves a good balance between computation and communication in the distributed setting. In Section \ref{sec:log_reg}, we evaluate the performance of the multi-batch L-BFGS method on binary classification tasks in both the multi-batch and fault-tolerant settings. In Section \ref{sec:neural_nets}, we demonstrate the performance of the multi-batch L-BFGS method on neural network training tasks,
 and compare against some of the state-of-the-art methods. Finally, in Section \ref{sec:scaling}, we illustrate the strong and weak scaling properties of the multi-batch L-BFGS method.

\subsection{Logistic Regression}
\label{sec:log_reg}
In this section, we focus on logistic regression problems; the optimization problem can be stated as:
\begin{align*}
	\min_{w \in \mathbb{R}^d} F(w) =  \frac{1}{n}\sum_{i=1}^{n}\log\left(1+e^{-y^i(w^Tx^i)}\right)
  + \frac{\sigma}{2} \|w\|^2,
\end{align*}
where $ (x^i, y^i)_{i=1}^n$  denote the training examples and $\sigma = \frac1n$ is the regularization parameter. 

We present numerical results that evaluate the performance of the proposed robust multi-batch L-BFGS scheme  (Algorithm \ref{alg:multi}) in both the \emph{multi-batch} (Figure \ref{fig:demo:MB}) and \emph{fault tolerant} (Figure \ref{fig:demo:FT}) settings, on the \texttt{webspam} dataset\footnote{LIBSVM: \url{https://www.csie.ntu.edu.tw/~cjlin/libsvmtools/datasets/binary.html}}. We compare our proposed method (Robust L-BFGS) against three methods: $(i)$ multi-batch L-BFGS without enforcing sample consistency (L-BFGS), where gradient differences are computed using different samples, i.e., $y_k = g^{S_{k+1}}_{k+1} - g^{S_k}_k$; $(ii)$ multi-batch gradient descent (Gradient Descent), which is obtained by setting $H_k = I$ in Algorithm 1; and, $(iii)$ serial SGD (SGD), where at every iteration one sample is used to compute the gradient. We run each method with 10 different random seeds, and, where applicable, report results for different batch ($r$) and overlap ($o$) sizes. In Figures \ref{fig:demo:MB} and \ref{fig:demo:FT} we show the evolution of the norm of the gradient in terms of epochs.

\begin{figure}
\centering
\includegraphics[width=\textwidth]{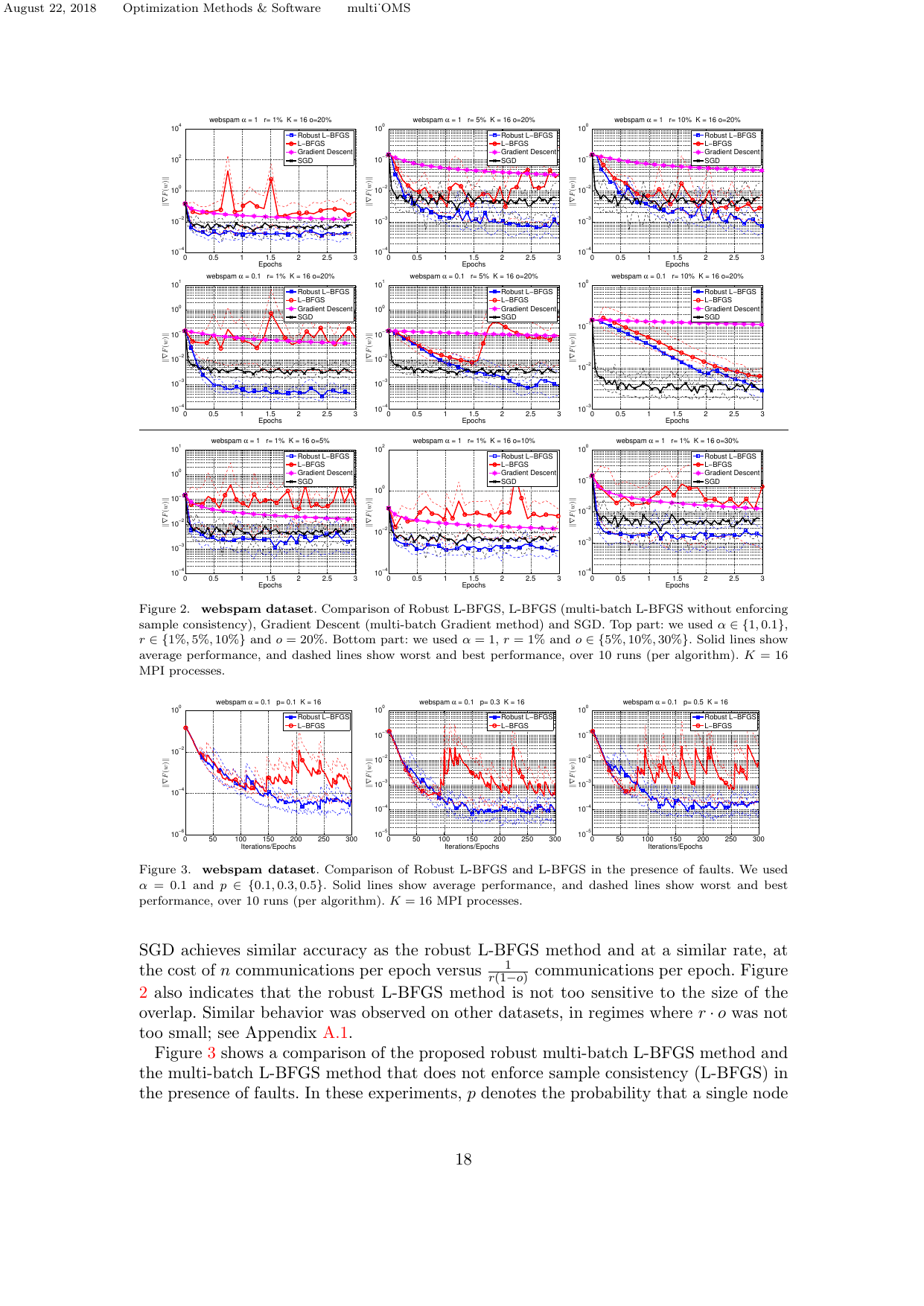}
\caption{\textbf{webspam dataset}. Comparison of Robust L-BFGS, L-BFGS (multi-batch L-BFGS without enforcing sample consistency), Gradient Descent (multi-batch Gradient method) and SGD. Top part:
we used $\alpha \in \{1, 0.1\}$,
$r\in \{1\%,  5\%,  10\%\}$ and $o=20\%$.
Bottom part: we used $\alpha=1$, $r=1\%$ and
$o\in \{5\%,  10\%, 30\%\}$. Solid lines show average performance, and dashed lines show worst and best performance, over 10 runs (per algorithm). $K=16$ MPI processes.}
\label{fig:demo:MB}
\end{figure}

In the multi-batch setting, the proposed method is more stable than the standard L-BFGS method; this is especially noticeable when $r$ is small. On the other hand, serial SGD achieves similar accuracy as the robust L-BFGS method and at a similar rate, at the cost of $n$ communications per epoch versus $\frac{1}{r(1-o)}$ 
communications per epoch. Figure \ref{fig:demo:MB} also indicates that the robust L-BFGS method is not too sensitive to the size of the overlap. Similar behavior was observed on other datasets, in regimes where $r\cdot o$ was not too small; see \cite[Section A.1]{berahas2017robust_supp}.

\begin{figure}
\centering

\includegraphics[width=\textwidth]{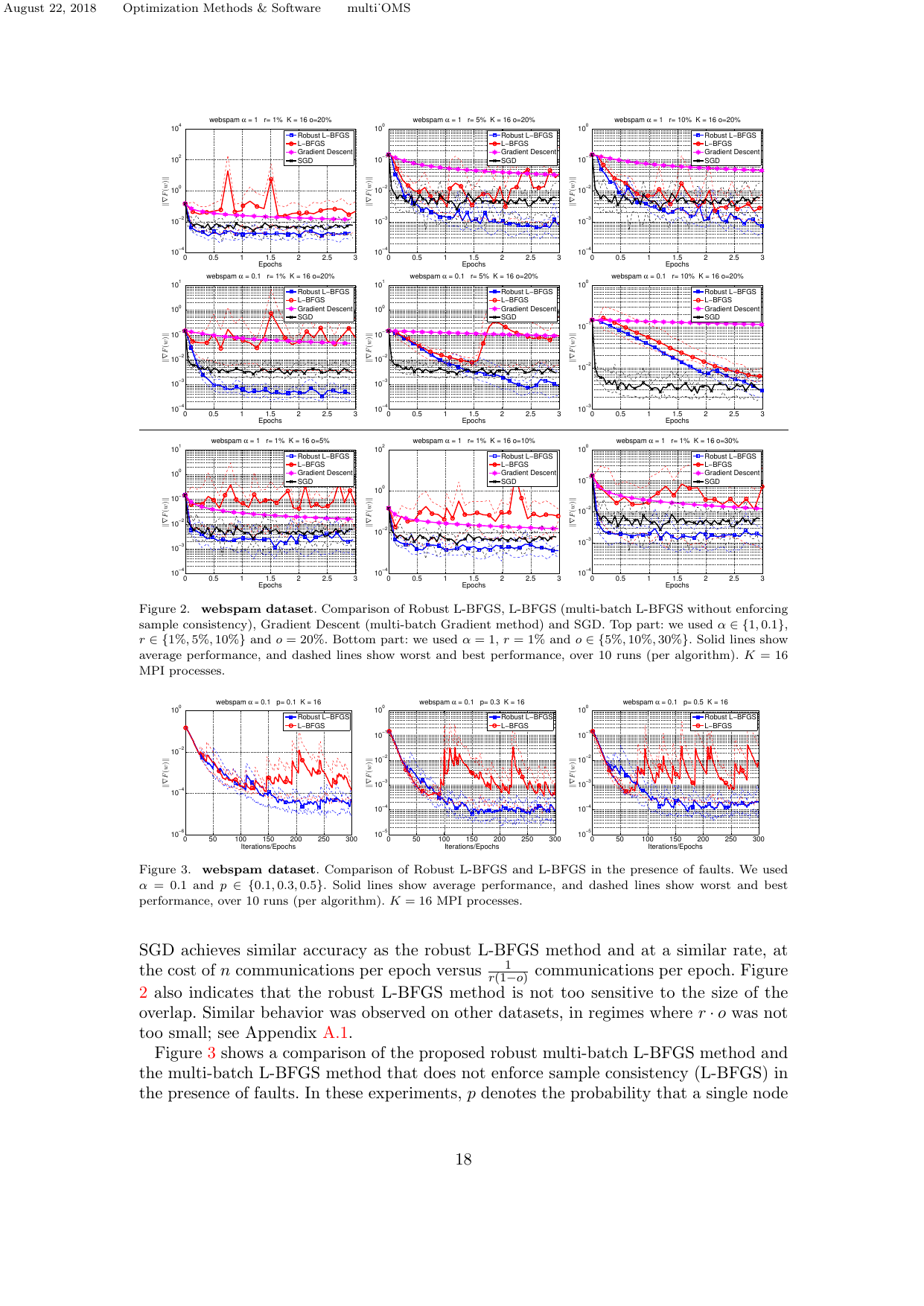}
 
\caption{\textbf{webspam dataset}. Comparison of Robust L-BFGS and L-BFGS in the presence of faults.
We used $\alpha=0.1$ and $p\in \{0.1, 0.3, 0.5\}$. Solid lines show average performance, and dashed lines show worst and best performance, over 10 runs (per algorithm). $K=16$ MPI processes.
}\label{fig:demo:FT}
\end{figure}

Figure  \ref{fig:demo:FT} shows a comparison of the proposed robust multi-batch L-BFGS method and the multi-batch L-BFGS method that does not enforce sample consistency (L-BFGS) in the presence of faults. In these experiments, $p$ denotes the probability that a single node (MPI process) will not return a gradient evaluated on local data within a given time budget. We illustrate the performance of the methods for $\alpha=0.1$ and $p\in \{0.1, 0.3, 0.5\}$. We observe that the robust implementation is not affected much by the failure probability $p$. Similar behavior was observed on other datasets; see \cite[Section A.2]{berahas2017robust_supp}.

 \subsection{Neural Networks}
\label{sec:neural_nets}
In this section, we study the performance of the multi-batch L-BFGS method\footnote{Code available at: \url{https://github.com/OptMLGroup/Multi-Batch_L-BFGS}.} on Neural Network tasks, on the MNIST and CIFAR10/CIFAR100 datasets\footnote{MNIST available at: \url{http://yann.lecun.com/exdb/mnist/}. CIFAR10/CIFAR100 available at: \url{https://www.cs.toronto.edu/~kriz/cifar.html}.}. Table \ref{tbl:NN} summarizes the network architectures that we used; see \cite[Section B]{berahas2017robust_supp} 
for more details. The first problem is convex; all other problems are nonconvex.


\begin{table}[h!]

\caption{ Structure of Neural Networks.}
\small
\vskip5pt
\label{tbl:NN}
\centering
\begin{tabular}{lccrc}
\toprule
\textbf{Network} &
 \textbf{Type} &
 \textbf{\# of layers} & 
  \textbf{$d$} & \textbf{Ref.}  \\  \midrule

\textbf{MNIST MLC} & fully connected (FC)& 1& 7.8k & \cite{lecun1998gradient}
 \\ \hdashline
\textbf{MNIST DNN (SoftPlus)} & conv+FC  & 4& 1.1M &  \cite{PyTorchExamples}
\\ \hdashline
\textbf{MNIST DNN (ReLU)} & conv+FC & 4& 1.1M & \cite{PyTorchExamples} 
 \\ \hdashline
\textbf{CIFAR10 LeNet} & conv+FC & 5& 62.0k &  \cite{lecun1998gradient}
\\ \hdashline
\textbf{CIFAR10 VGG11} &
conv+batchNorm+FC
 & 29& 9.2M & \cite{simonyan2014very}
\\ \hdashline
\textbf{CIFAR100 VGG11} & conv+batchNorm+FC& 29& 9.2M & \cite{simonyan2014very}
\\
 \bottomrule
{\small MLC =  Multiclass Linear Classifier}
\end{tabular}
\end{table}

We implemented our algorithm in PyTorch \citep{paszke2017automatic} and compare against popular and readily available algorithms: $(i)$ SGD \cite{robbins1951stochastic}, and $(ii)$  Adam \citep{kingma2014adam}. We denote our proposed method as LBFGS in the figures in this section. Note, we implemented our method with the \emph{cautious updating} strategy, and For each method, we conducted a grid search to find the best learning rate $\alpha \in \{2^{0},2^{-1},\dots,  2^{-10}\}$, and also investigated the effect of different batch sizes $| S | \in \{50, 100, 200, 500, 1000, 2000, 4000 \}$; see \cite[Section B]{berahas2017robust_supp} 
 for detailed experiments with all batch sizes. For the multi-batch L-BFGS method we also investigated the effect of history length $m \in \{1,2,5,10,20 \}$. The overlap used in our proposed method was $20\%$ of the batch, $o = 0.20$.


The authors in \cite{keskar2016adaqn} observed that the widely used Barzilai-Borwein-type scaling $\frac{s_k^T y_k}{y_k^T y_k} I$ of the initial Hessian approximation may lead to quasi-Newton updates that are not stable when small batch sizes are employed, especially for deep neural training tasks, and as such propose an Agadrad-like scaling of the initial BFGS matrix. To obviate this instability, we implement a variant of the multi-batch L-BFGS method (LBFGS2) in which we scale the initial Hessian approximations as $\alpha I$. We ran experiments with both scaling strategies and the overall results were similar. Therefore, in the figures in this section we only show results for the latter strategy. 

\begin{figure}
\centering

\includegraphics[width=\textwidth]{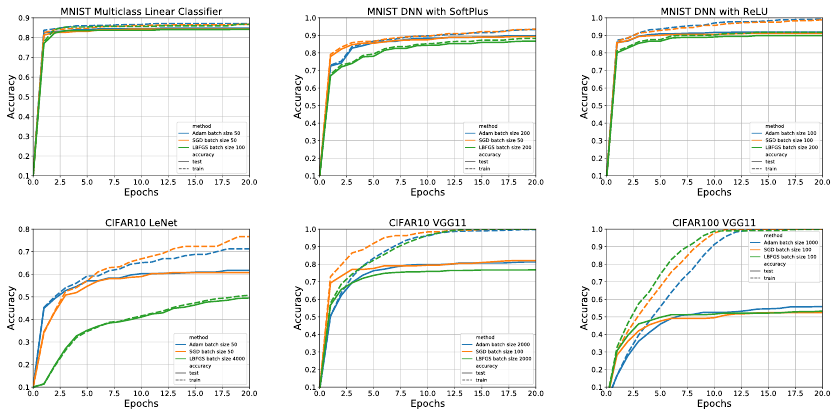}

\caption{Comparison of multi-batch L-BFGS (LBFGS), Adam and SGD. Evolution of the training and testing accuracy for the best parameter setting for each method and problem. Top right: MNIST Multiclass Linear Classifier; Top middle: MNIST DNN (SoftPlus); Top Right: MNIST DNN (ReLU); Bottom left: CIFAR10 LeNet; Bottom middle: CIFAR10 VGG11; Bottom right: CIFAR100 VGG11. \label{fig:best_all}}
\end{figure}

Figure \ref{fig:best_all} illustrates the evolution of the \textcolor{black}{running maximum of the} training accuracy (dashed lines) and testing accuracy (solid lines) for the best parameter settings for each method over the first 20 epochs of training. \textcolor{black}{By best parameter settings we mean the run that achieved the highest training accuracy within 20 epochs.} One can make several observations. First, it appears that the multi-batch L-BFGS method is competitive with the first-order methods on all training problems, except for CIFAR10 LeNet, in terms of both training and testing accuracy. Second, for \textcolor{black}{half of the problems (three out of six)}, the best runs of the multi-batch L-BFGS method use larger batch sizes than the first-order methods. \textcolor{black}{Of course, this benefit is not as clear on the neural network training problems as it is in the logistic regression problems. } Third, the benefits of incorporating second-order information are not as apparent in these nonconvex problems as compared to the problems presented in Section \ref{sec:log_reg}. We attribute this to two things: (1) nonconvex problems are hard, and (2) quasi-Newton methods are better are capturing curvature information for convex problems.

\begin{figure}
\centering

\includegraphics[width=\textwidth]{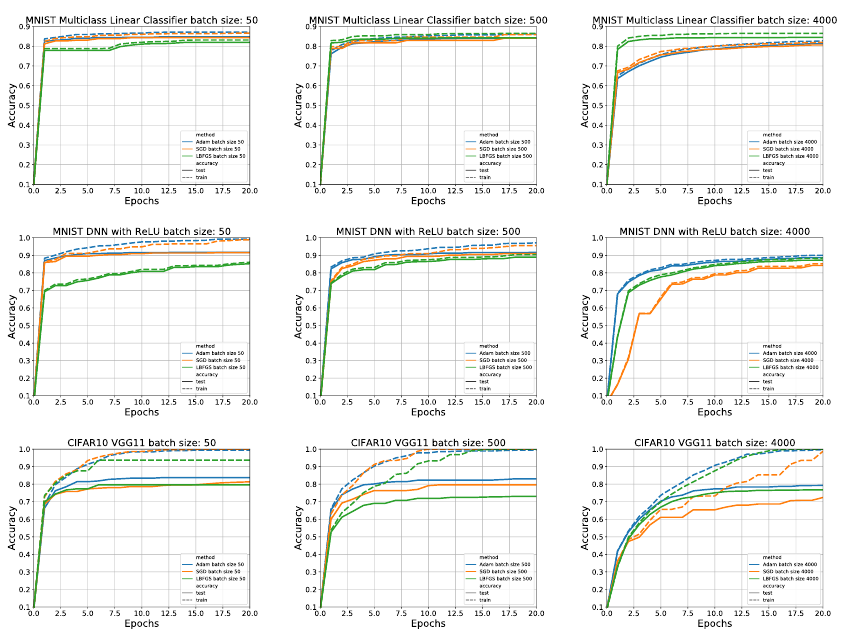}

\caption{Comparison of multi-batch L-BFGS (LBFGS), Adam and SGD. Evolution of the training and testing accuracy for different batch sizes ($| S | \in \{50, 500, 4000 \}$). Top row: MNIST Multiclass Linear Classifier; Middle row: MNIST DNN (ReLU); Bottom row: CIFAR10 VGG11. \label{fig:diff_batches}}
\end{figure}

We now investigate the effect of the batch size. In Figure \ref{fig:diff_batches} we show the evolution of the \textcolor{black}{running maximum of the} training/testing accuracy for different batch sizes $| S | \in \{50, 500, 4000 \}$ for three of the problems. For a complete set of results, see \cite[Section B]{berahas2017robust_supp}. 
Overall, one can observe that for small batch sizes, the multi-batch L-BFGS variants perform worse than the first order methods. However, when large batches are employed (a regime that is favorable foe GPU computing), the multi-batch L-BFGS method performs on par with the other methods. \textcolor{black}{Moreover, it appears that on several problems the performance of the multi-batch L-BFGS method is less affected by the size of the batch, i.e., the variability in the final training and testing error (after 20 epochs) in terms of batch size is smaller for the multi-batch L-BFGS method than for the stochastic first-order methods; see also \cite[Section B]{berahas2017robust_supp}.}

In order to understand why the multi-batch L-BFGS method does not perform well for small batches, we looked at two diagnostic measures: $(i)$ the angle between the true gradient curvature vector $y_d$ and subsampled gradient curvature vector $y_s$ ($\frac{\langle y_s,y_d \rangle}{\| y_s\| \| y_d \|}$); and $(ii)$ the ratio of subsampled gradient curvature vector to true gradient curvature vector ($\frac{y_s}{y_d}$). These measures indicate how informative the curvature information captured by the multi-batch L-BFGS method really is. Values close to $1$ (dashed red lines) are ideal for both measures. We chose 3 different points (the starting point, a point after 3 epochs of Adam, and a point after 10 epochs of Adam). From those points we took a gradient descent step with sufficiently small step length, and computed the true gradient curvature vector ($y_d$). We also computed 100 different stochastic variants of the gradient curvature vector ($y_s$) using different batch sizes ($| S | \in \{50, 100, 200, 500, 1000, 2000, 4000 \}$), and calculated the values of the two metrics. We illustrate the results in Figure \ref{fig:angle_ratio}; see \cite[Section B]{berahas2017robust_supp} 
for more results. Several observations can be drawn from this figure. First, not surprisingly, the metrics improve (get closer to $1$) as the batch size increases. Second, for the convex case, the metrics perform as expected both close and far from the solution; as a result (sufficiently) good curvature information is captured and the method performs well. On the other hand, for the nonconvex problems, the metrics indicate that, especially for small batch sizes, the curvature information captured can be terrible.

\begin{figure}
\centering

\includegraphics[width=\textwidth]{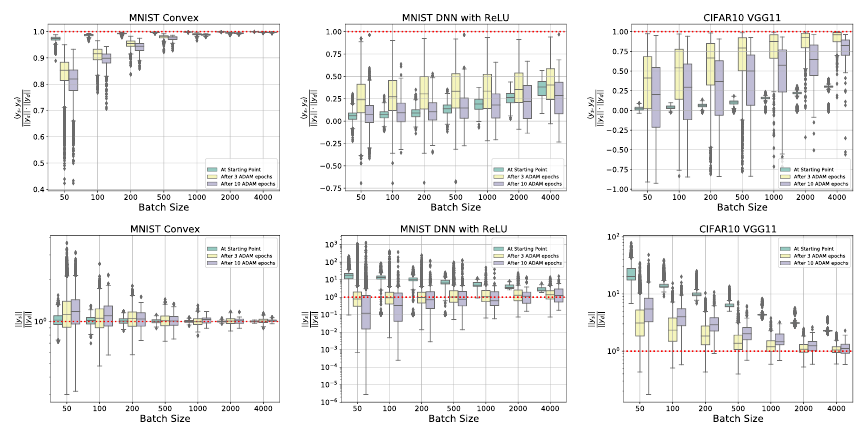}

\caption{Multi-batch L-BFGS diagnostic metrics for different batch sizes $| S | \in \{50, 100, 200, 500, 1000, 2000, 4000 \}$. Top row: angle between the true gradient curvature vector $y_d$ and subsampled gradient curvature vector $y_s$ ($\frac{\langle y_s,y_d \rangle}{\| y_s\| \| y_d \|}$); Bottom row: ratio of subsampled gradient curvature vector to true gradient curvature vector ($\frac{y_s}{y_d}$). Left column: MNIST Multiclass Linear Classifier; Middle column: MNIST DNN (ReLU); Right column: CIFAR10 VGG11.}
\label{fig:angle_ratio}
\end{figure}

\begin{wrapfigure}{r}{0.5\textwidth}
\vspace{-0.75cm}
  \begin{center}
    \includegraphics[width=0.48\textwidth]{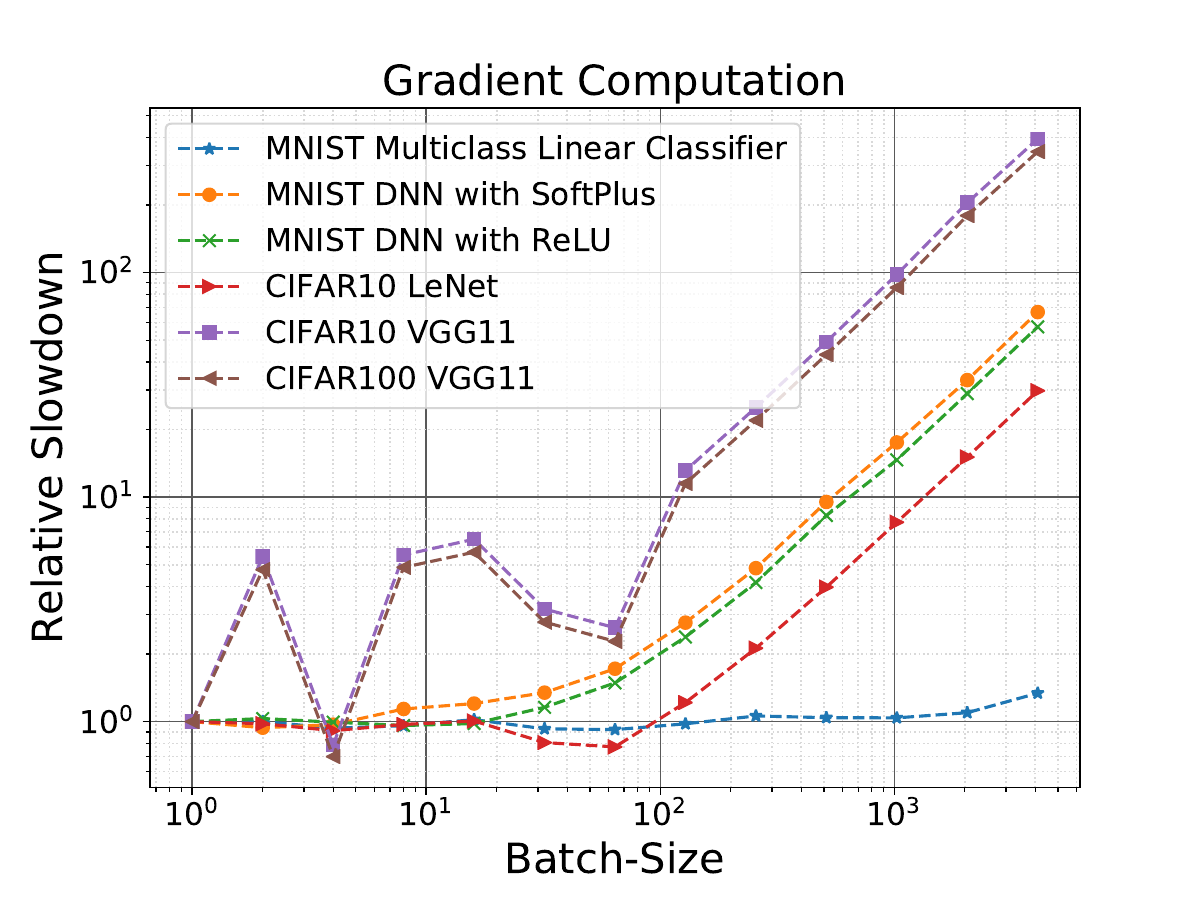}
  \end{center}
\caption{Relative slow-down of computational time to compute gradients for different batch-sizes compared to the computational time to compute gradients with batch-size 1.
}
\label{fig:batchSize1}
\end{wrapfigure}
We should note that using a large batch size is not a bottleneck for current high-performance computing hardware, on the contrary, using small batch sizes leads to under-utilization of computational hardware and in fact hinders the ability to parallelize the methods. Figure \ref{fig:batchSize1} illustrates this phenomenon; we show the average computational time of a gradient evaluation (over 1,000 evaluations) for different batch sizes ranging from $1$ to $4096$ relative to the computational time of the computation of the gradient using a single data point. The computation was performed on an NVIDIA Tesla K80 GPU using PyTorch. It is clear that for the {\it Multiclass Linear classification} task, the compute time of a single gradient is roughly the same as the compute time of a gradient based on a batch of size $1024$, whereas for the larger training tasks, the compute time of the gradients appear constant up to batch sizes of roughly $128$. \textcolor{black}{We should  note however that their is a risk of decreased generalization when increasing the batch size, unless other strategies such as modifying the step size or regularization are used; see e.g., \cite{keskar2016large,goyal2017accurate}.}

\newpage

\subsection{Scaling of the Multi-Batch L-BFGS Implementation}
\label{sec:scaling}

In this section, we study the strong and weak scaling properties of the robust multi-batch L-BFGS method on artificial data. For various values of batch size ($r$) and nodes ($K$), we measure the time needed to compute a gradient (Gradient) and the time needed to compute and communicate the gradient (Gradient$+C$), as well as, the time needed to compute the L-BFGS direction (L-BFGS) and the associated communication overhead (L-BFGS$+C$). \textcolor{black}{The function of which we are computing the gradient is logistic regression. The L-BFGS direction is computed using the Vector-Free L-BFGS implementation \citep{chen2014large}. We should note that the time to compute the gradient, which of course is required for computing the L-BFGS direction, is not included in L-BFGS and L-BFGS$+C$. We report the extra time to compute the L-BFGS step, after having computed the gradient. Thus, the goal of this section is to show that the time needed to compute the L-BFGS direction is insignificant compared to the cost of computing the gradient, which is needed in any case to run first-order methods. }

\subsubsection*{Strong Scaling}

\begin{wrapfigure}{r}{0.4\textwidth}
\vspace{-0.75cm}
  \begin{center}
    \includegraphics[width=0.38\textwidth]{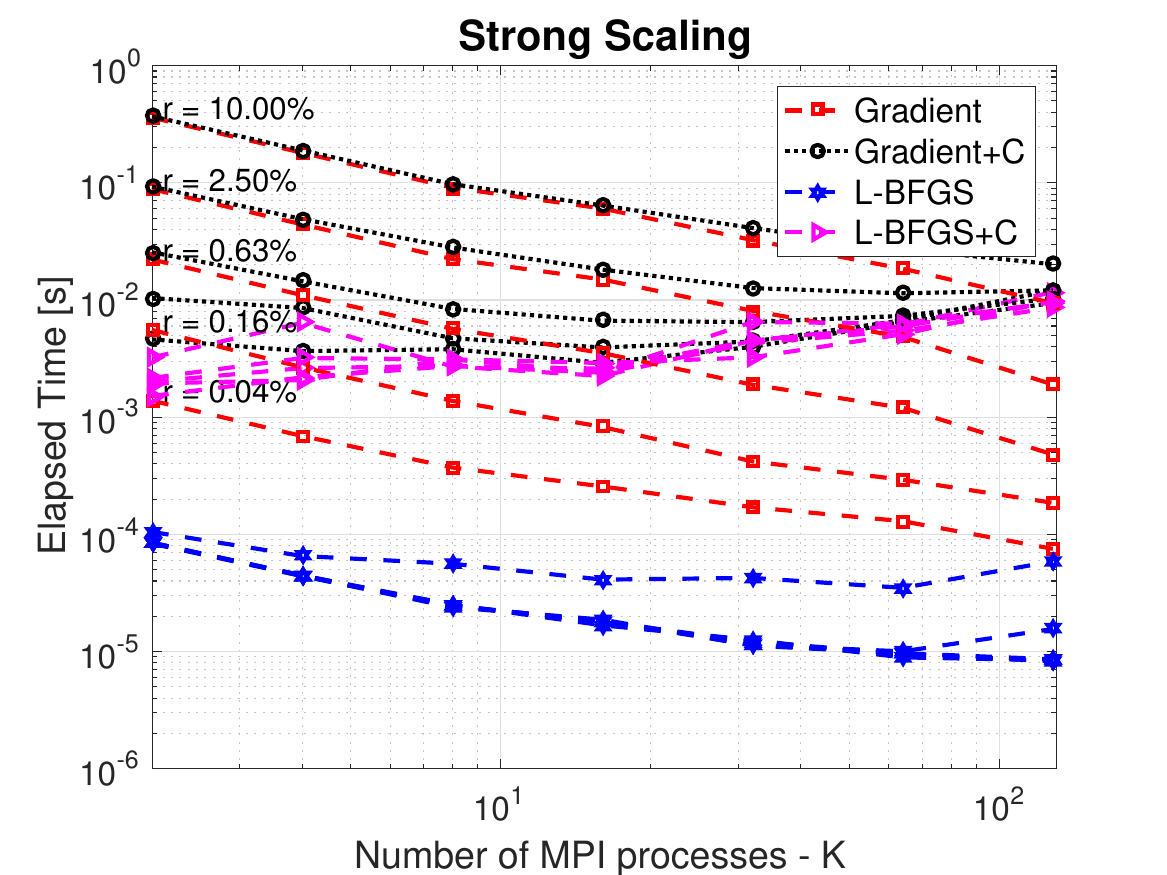}
  \end{center}
  \vspace{-0.25cm}
\caption{Strong scaling of robust multi-batch L-BFGS on a problem with artificial data; 
  $n=10^7$ and $d=10^4$. Each sample has $160$ non-zero elements (dataset size 24GB). \label{strongscaling}}
\end{wrapfigure}
Figure \ref{strongscaling} depicts the strong scaling properties of the multi-batch L-BFGS method, for different batch sizes ($r$) and nodes ($K=1,2,...,128$). For this task, we generate a dataset with $n=10^7$ samples and $d=10^4$ dimensions, where each sample has 160 randomly chosen non-zero elements (dataset size 24GB). One can observe that as the number of nodes ($K$) is increased, the compute times for the gradient and the L-BFGS direction decrease. However, when communication time is considered, the combined cost increases slightly as $K$ is increased. Notice that for large $K$, even when $r=10\%$ (i.e., $10\%$ of all samples processed in one iteration, $\sim$18MB of data), the amount of local work is not sufficient to overcome the communication cost.

\subsubsection*{Weak Scaling --  Fixed Problem Dimension, Increasing Data Size}

\begin{wrapfigure}{r}{0.4\textwidth}
\vspace{-0.75cm}
  \begin{center}
    \includegraphics[width=0.38\textwidth]{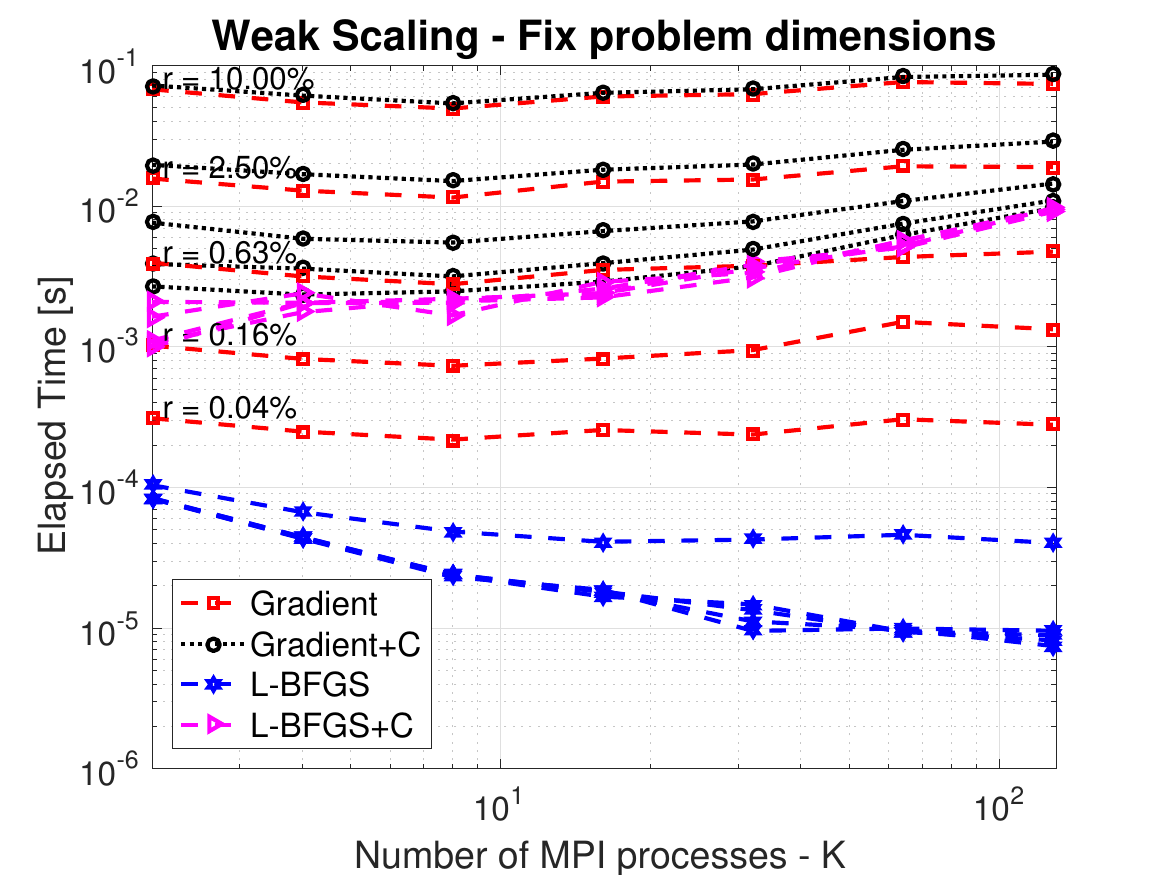}
  \end{center}
  \vspace{-0.25cm}
\caption{Weak scaling of robust multi-batch L-BFGS on a problem with artificial data;  
  $n=10^7$ and $d=10^4$. Each sample has $10\cdot K$ non-zero elements (size of local problem 1.5GB). \label{weakscaling2}}
\end{wrapfigure}
In order to illustrate the weak scaling properties of the algorithm, we generate a data-matrix $X \in \mathbb{R}^{n	\times d}$ ($n=10^7$, $d = 10^4$), and compute the gradient and the L-BFGS direction on a shared cluster with different number of MPI processes ($K=1,2,...,128$). Each sample has $10\cdot K$ non-zero elements, thus for any $K$ the size of local problem is roughly $1.5$GB (for $K=128$ size of data 192GB). Effectively, the dataset size ($n$) is held fixed, but the sparsity of the data decreases as more MPI processes are used. The compute time for the gradient is almost constant, this is because the amount of work per MPI process (rank) is almost identical; see Figure \ref{weakscaling2}. On the other hand, because we are using a Vector-Free L-BFGS implementation \citep{chen2014large} for computing the L-BFGS direction, the amount of time needed for each node to compute the L-BFGS direction decreases as $K$ is increased. However, increasing $K$ does lead to larger communication overhead, and as such the overall time needed to compute and communicate the L-BFGS direction increases slightly as $K$ is increased. For $K=128$ (192GB of data) and $r=10\%$, almost 20GB of data are processed per iteration in less than 0.1 seconds, which implies that one epoch would take around 1 second.

\subsubsection*{Increasing Problem Dimension, Fixed Data Size and $K$}

\begin{wrapfigure}{r}{0.4\textwidth}
\vspace{-1cm}
  \begin{center}
    \includegraphics[width=0.38\textwidth]{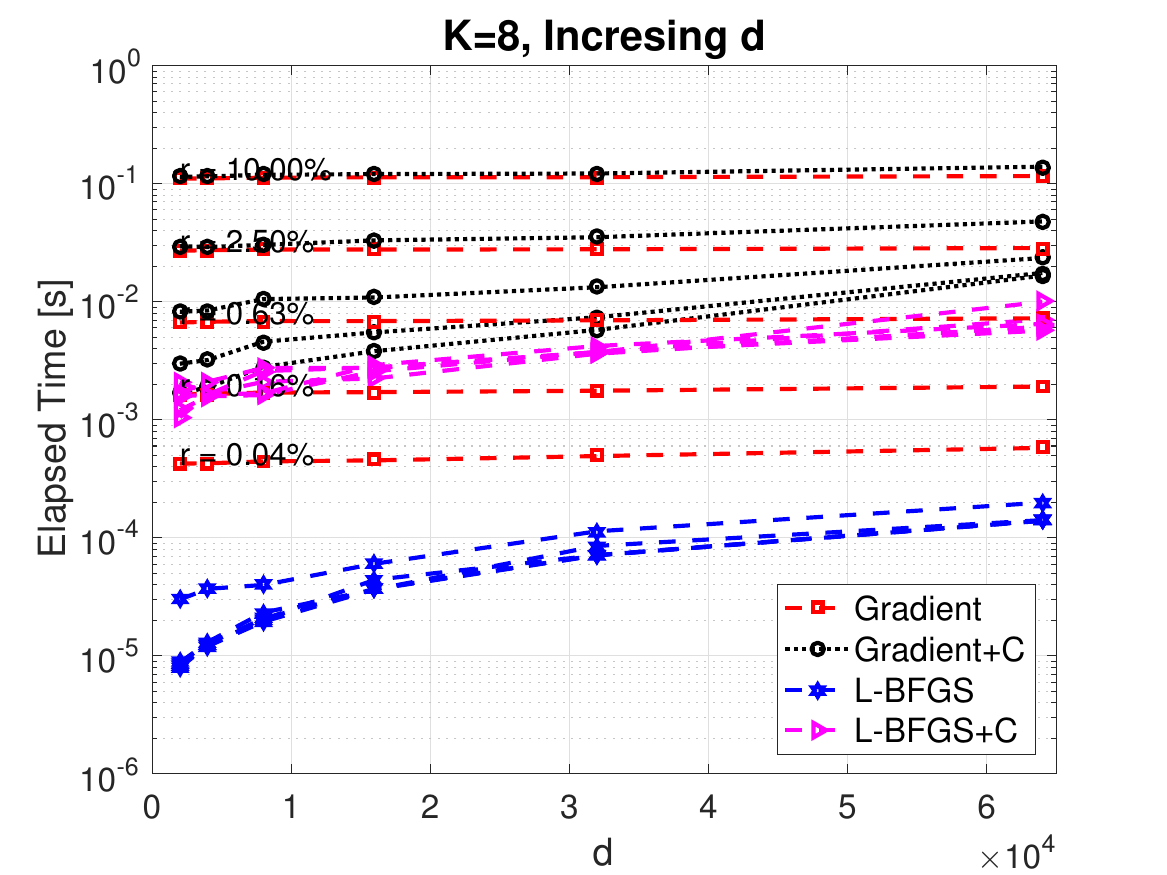}
  \end{center}
    \vspace{-0.25cm}
\caption{Scaling of robust multi-batch L-BFGS on a problem with artificial data; $n=10^7$, increasing $d$ and $K=8$ MPI processes. Each sample had 200 non-zero elements (dataset size 29GB). \label{weakscaling}}
\end{wrapfigure}
In this experiment, we investigate the effect of a change in the dimension ($d$) of the problem on the computation of the gradient and the L-BFGS direction. We fix the size of data (29GB) and the number of MPI processes ($K=8$), and generate data with $n=10^7$ samples, where each sample has 200 non-zero elements. Figure \ref{weakscaling} shows that increasing the dimension $d$ has a mild effect on the computation time of the gradient, while the effect on the time needed to compute the L-BFGS direction is more apparent. However, if communication time is taken into consideration, the time required for the gradient computation and the L-BFGS direction computation increase as $d$ is increased. \textcolor{black}{We should note that the results presented in Figure \ref{weakscaling} are not surprising; there is
minimal change in performance (in terms of the gradient computation) as dimension increases, since the number of nonzero elements is fixed and sparse matrix operations are emplyed.}

\color{black}

\section{Final Remarks}
\label{sec:final_rem}
\setcounter{equation}{0}

In this paper, we assumed that sample consistency is not possible (\emph{fault-tolerant} setting) or desirable (\emph{multi-batch} setting), and described a novel and robust variant of the L-BFGS method designed to deal with two adversarial situations. The success of the algorithm relies on the fact that gradient differences need not be computed on the full batch, rather a small subset can be used alleviating the need for double function evaluations while still maintaining useful curvature information. The method enforces a small degree of control in the sampling process and avoids the pitfalls of using inconsistent gradient differences by performing quasi-Newton updating on the overlap between consecutive samples. 

Our numerical results indicate that provided the overlap is not too small, the proposed method is efficient in practice on machine learning tasks such as binary classification logistic regression and neural network training. The experiments presented in this paper show that the empirical performance of the method matches that predicted by the theory for both strongly convex and nonconvex functions. Specifically, in the strongly convex case the multi-batch L-BFGS method with a constant step length converges to a neighborhood of the solution at a linear rate, and in the nonconvex case the iterates produced by the multi-batch L-BFGS method converge to a neighborhood of a stationary point.

\textcolor{black}{Of course, the development, both theoretical and practical, of stochastic quasi-Newton methods is far from complete, and there are many interesting directions that can and should be investigated. Theoretical analysis that would suggest the batch size and overlap size would be of great interest in practice. Moreover, an investigation of the multi-batch L-BFGS method that employs variance reduced gradients in lieu of the stochastic gradients could have both theoretical and practical advantages. Finally, a stochastic line search that could work in conjunction with the multi-batch L-BFGS method would be novel both algorithmically and theoretically, and would most probably make the method even more competitive in practice.}

\subsubsection*{Acknowledgements} This work was partially supported by DARPA Lagrange award HR-001117S0039 and U.S. National Science Foundation, under award numbers NSF:CCF:1618717, NSF:CMMI:1663256 and NSF:CCF:1740796.

\bibliographystyle{gOMS}
\bibliography{multi_refs}

\newpage
\appendices

\section{Extended Numerical Results - Real Datasets - Logistic Regression}
\label{sec:extnumres}

In this section, we present further numerical results on binary classification logistic regression problems, on the datasets listed in Table \ref{tbl:alldatasets}, in both the multi-batch and fault-tolerant settings. Note, that some of the datasets are too small, and thus, there is no reason to run them on a distributed platform; however, we include them as they are part of the
standard benchmarking datasets.

\begin{table}[htp]
\centering
\caption{Datasets and basic statistics. All datasets are available at \url{https://www.csie.ntu.edu.tw/~cjlin/libsvmtools/datasets/binary.html}.\label{tbl:alldatasets}}
\begin{tabular}{lrrrr}
\toprule
\multicolumn{1}{l}{\textbf{Dataset}} & \textbf{$\pmb{n}$} & \textbf{$\pmb{d}$} & \textbf{Size (MB)} & \textbf{K} \\  \midrule
\textbf{ijcnn} &  91,701 & 22 & 14 & 4 \\ \hdashline
\textbf{cov} & 581,012 & 54 & 68 & 4 \\ \hdashline
\textbf{news20} & 19,996 & 1,355,191 & 134 & 4\\ \hdashline
\textbf{rcvtest} & 677,399 & 47,236 & 1,152 & 16 \\ \hdashline
\textbf{url} & 2,396,130 & 3,231,961& 2,108&16 \\ \hdashline
\textbf{kdda}  &	8,407,752	&	20,216,830 & 2,546 & 16\\ \hdashline
\textbf{kddb}  & 19,264,097	&	29,890,095 & 4,894 & 16 \\ \hdashline
\textbf{webspam} & 350,000 & 16,609,143 & 23,866 & 16 \\ \hdashline
\textbf{splice-site} & 50,000,000 &  11,725,480  & 260,705 & 16 \\
 \bottomrule
\end{tabular} 
\end{table}

We focus on logistic regression classification; the objective function is given by 
\begin{align*}
	\min_{w \in \mathbb{R}^d} F(w) =  \frac{1}{n}\sum_{i=1}^{n}\log\left(1+e^{-y^i(w^Tx^i)}\right)
  + \frac{\sigma}{2} \|w\|^2,
\end{align*}
where $ (x^i, y^i)_{i=1}^n$  denote the training examples and $\sigma = \frac{1}{n}$ is the regularization parameter.

\subsection{Extended Numerical Results - Multi-Batch Setting}
\label{sec:extnumres_mb}
For the experiments in this section (Figures \ref{fig:ijcnn}-\ref{fig:splice}), we ran four methods:
\begin{itemize}
	\item (Robust L-BFGS) robust multi-batch L-BFGS (Algorithm \ref{alg:multi}),
	\item (L-BFGS) multi-batch L-BFGS without enforcing sample consistency; gradient differences are computed using different samples, i.e., $y_k = g_{k+1}^{S_{k+1}}-g_{k}^{S_{k}}$,
	\item (Gradient Descent) multi-batch gradient descent; obtained by setting $H_k = I$ in Algorithm \ref{alg:multi},
	\item (SGD) serial SGD; at every iteration one sample is used to compute the gradient.
\end{itemize}
In Figures \ref{fig:ijcnn}-\ref{fig:splice} we 
show the evolution of $\|\nabla F(w)\|$ for different step lengths $\alpha$, and for various batch ($\left| S\right| = r\cdot n $) and overlap ($\left| O\right| = o \cdot \left| S\right| $) sizes. Each Figure (\ref{fig:ijcnn}-\ref{fig:splice}) consists of 10 plots that illustrate the performance of the methods with the following parameters:
\begin{itemize}
	\item Top 3 plots: $\alpha=1$, $o=20\%$ and $r=1\%,5\%,10\%$
	\item Middle 3 plots: $\alpha=0.1$, $o=20\%$ and $r=1\%,5\%,10\%$
	\item Bottom 4 plots: $\alpha=1$, $r=1\%$ and $o=5\%,10\%,20\%,30\%$
\end{itemize}

\begin{figure}[H]
\centering

\includegraphics[width=\textwidth]{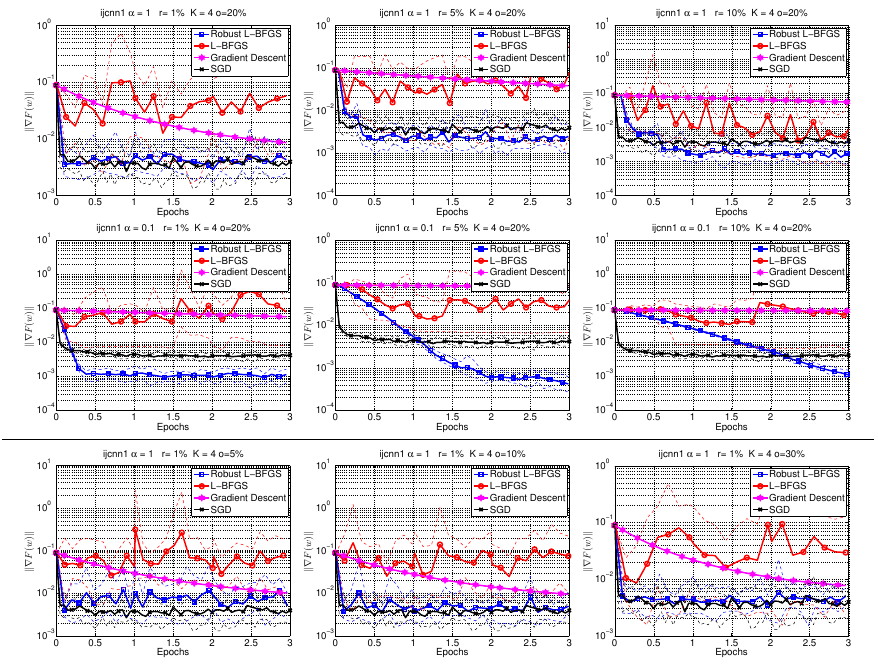}

\caption{\textbf{ijcnn1 dataset}. Comparison of Robust L-BFGS, L-BFGS (multi-batch L-BFGS without enforcing sample consistency), Gradient Descent (multi-batch Gradient method) and SGD. Top part:
we used $\alpha \in \{1, 0.1\}$,
$r\in \{1\%,  5\%,  10\%\}$ and $o=20\%$.
Bottom part: we used $\alpha=1$, $r=1\%$ and
$o\in \{5\%,  10\%, 30\%\}$. Solid lines show average performance, and dashed lines show worst and best performance, over 10 runs (per algorithm). $K=4$ MPI processes.}
\label{fig:ijcnn}
\end{figure}

\begin{figure}[H]
\centering

\includegraphics[width=\textwidth]{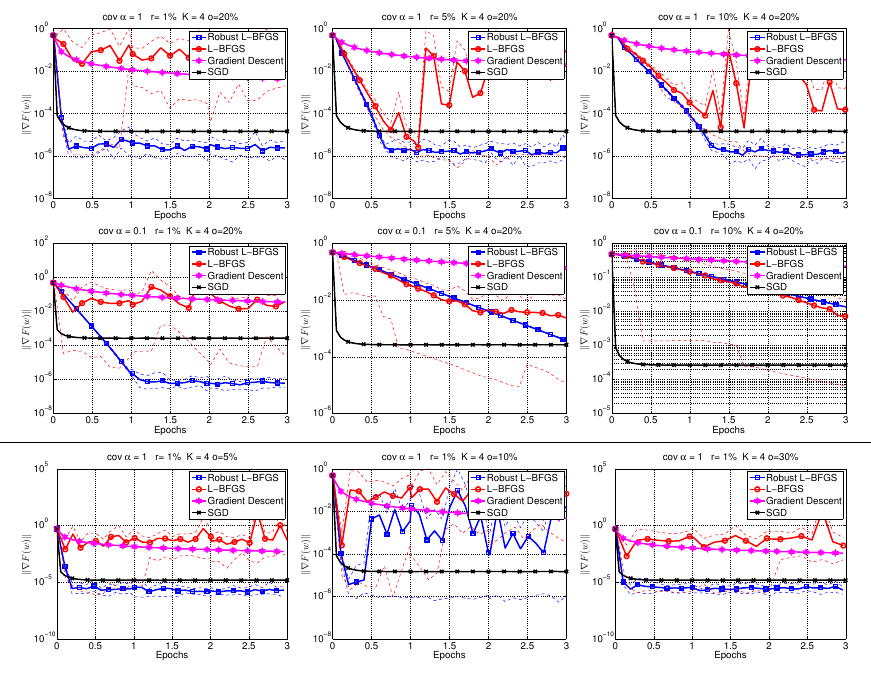}

\caption{\textbf{cov dataset}. Comparison of Robust L-BFGS, L-BFGS (multi-batch L-BFGS without enforcing sample consistency), Gradient Descent (multi-batch Gradient method) and SGD. Top part:
we used $\alpha \in \{1, 0.1\}$,
$r\in \{1\%,  5\%,  10\%\}$ and $o=20\%$.
Bottom part: we used $\alpha=1$, $r=1\%$ and
$o\in \{5\%,  10\%, 30\%\}$. Solid lines show average performance, and dashed lines show worst and best performance, over 10 runs (per algorithm). $K=4$ MPI processes.}
\end{figure}

\begin{figure}[H]
\centering

\includegraphics[width=\textwidth]{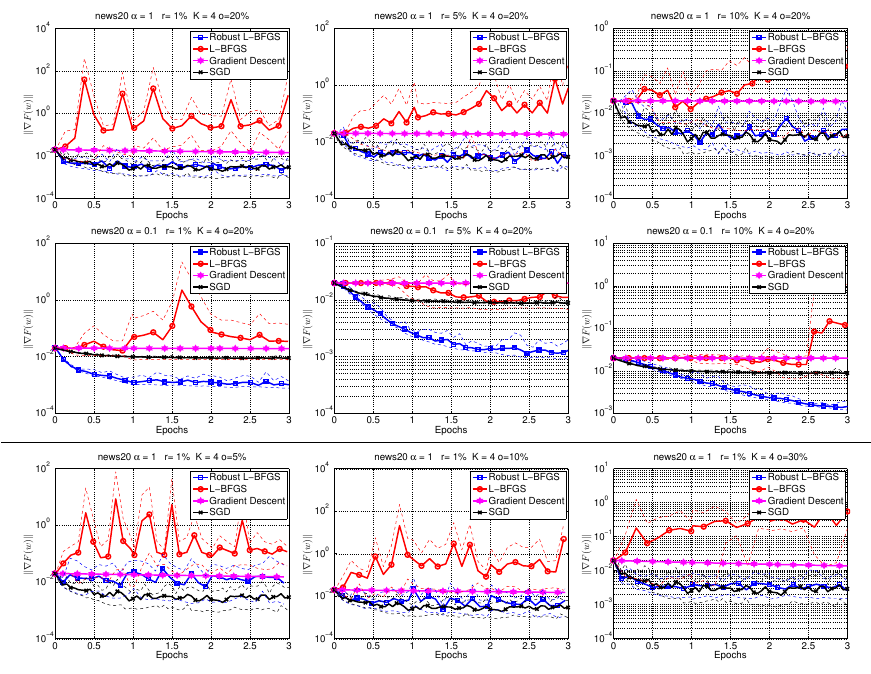}

\caption{\textbf{news20 dataset}. Comparison of Robust L-BFGS, L-BFGS (multi-batch L-BFGS without enforcing sample consistency), Gradient Descent (multi-batch Gradient method) and SGD. Top part:
we used $\alpha \in \{1, 0.1\}$,
$r\in \{1\%,  5\%,  10\%\}$ and $o=20\%$.
Bottom part: we used $\alpha=1$, $r=1\%$ and
$o\in \{5\%,  10\%, 30\%\}$. Solid lines show average performance, and dashed lines show worst and best performance, over 10 runs (per algorithm). $K=4$ MPI processes.}
\end{figure}

\begin{figure}[H]
\centering

\includegraphics[width=\textwidth]{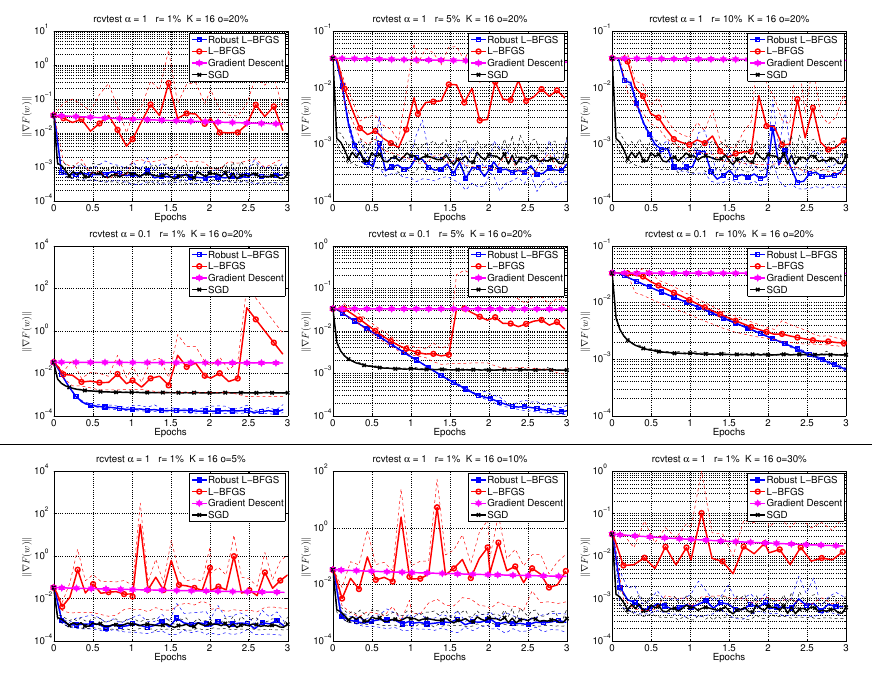}

\caption{\textbf{rcvtest dataset}. Comparison of Robust L-BFGS, L-BFGS (multi-batch L-BFGS without enforcing sample consistency), Gradient Descent (multi-batch Gradient method) and SGD. Top part:
we used $\alpha \in \{1, 0.1\}$,
$r\in \{1\%,  5\%,  10\%\}$ and $o=20\%$.
Bottom part: we used $\alpha=1$, $r=1\%$ and
$o\in \{5\%,  10\%, 30\%\}$. Solid lines show average performance, and dashed lines show worst and best performance, over 10 runs (per algorithm). $K=16$ MPI processes.}
\end{figure}

\begin{figure}[H]
\centering

\includegraphics[width=\textwidth]{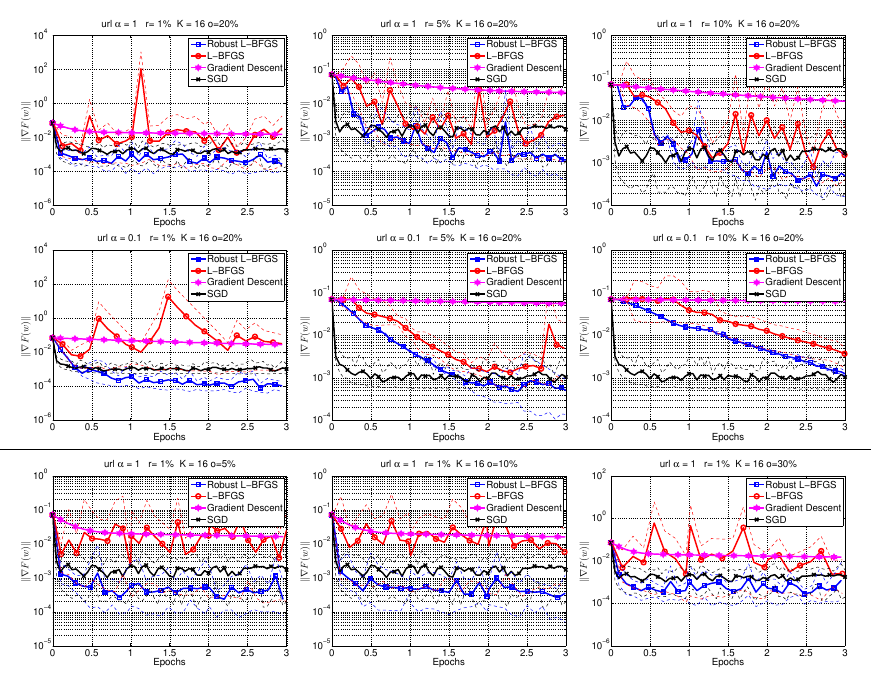}

\caption{\textbf{url dataset}. Comparison of Robust L-BFGS, L-BFGS (multi-batch L-BFGS without enforcing sample consistency), Gradient Descent (multi-batch Gradient method) and SGD. Top part:
we used $\alpha \in \{1, 0.1\}$,
$r\in \{1\%,  5\%,  10\%\}$ and $o=20\%$.
Bottom part: we used $\alpha=1$, $r=1\%$ and
$o\in \{5\%,  10\%, 30\%\}$. Solid lines show average performance, and dashed lines show worst and best performance, over 10 runs (per algorithm). $K=16$ MPI processes.}
\end{figure}

\begin{figure}[H]
\centering

\includegraphics[width=\textwidth]{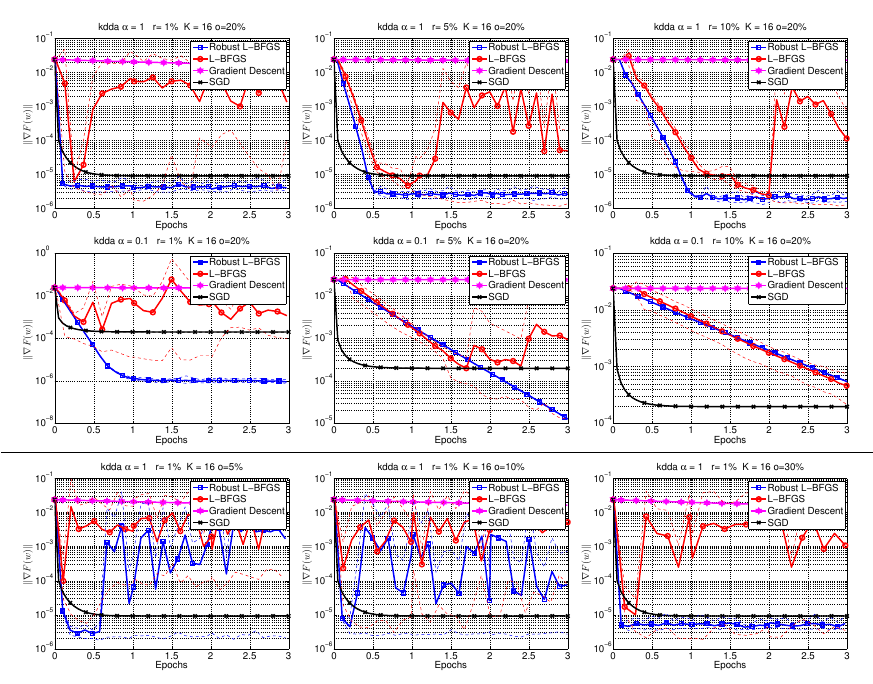}

\caption{\textbf{kdda dataset}. Comparison of Robust L-BFGS, L-BFGS (multi-batch L-BFGS without enforcing sample consistency), Gradient Descent (multi-batch Gradient method) and SGD. Top part:
we used $\alpha \in \{1, 0.1\}$,
$r\in \{1\%,  5\%,  10\%\}$ and $o=20\%$.
Bottom part: we used $\alpha=1$, $r=1\%$ and
$o\in \{5\%,  10\%, 30\%\}$. Solid lines show average performance, and dashed lines show worst and best performance, over 10 runs (per algorithm). $K=16$ MPI processes.}
\end{figure}

\begin{figure}[H]
\centering

\includegraphics[width=\textwidth]{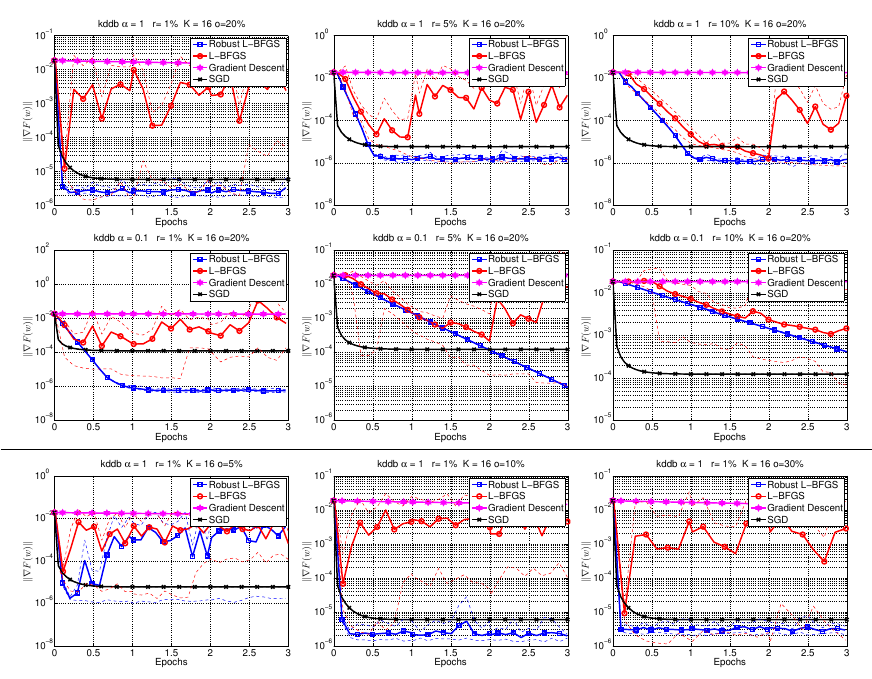}

\caption{\textbf{kddb dataset}. Comparison of Robust L-BFGS, L-BFGS (multi-batch L-BFGS without enforcing sample consistency), Gradient Descent (multi-batch Gradient method) and SGD. Top part:
we used $\alpha \in \{1, 0.1\}$,
$r\in \{1\%,  5\%,  10\%\}$ and $o=20\%$.
Bottom part: we used $\alpha=1$, $r=1\%$ and
$o\in \{5\%,  10\%, 30\%\}$. Solid lines show average performance, and dashed lines show worst and best performance, over 10 runs (per algorithm). $K=16$ MPI processes.}
\end{figure}

\begin{figure}[H]
\centering

\includegraphics[width=\textwidth]{figs_OMS/mb_webspam.pdf}

\caption{\textbf{webspam dataset}. Comparison of Robust L-BFGS, L-BFGS (multi-batch L-BFGS without enforcing sample consistency), Gradient Descent (multi-batch Gradient method) and SGD. Top part:
we used $\alpha \in \{1, 0.1\}$,
$r\in \{1\%,  5\%,  10\%\}$ and $o=20\%$.
Bottom part: we used $\alpha=1$, $r=1\%$ and
$o\in \{5\%,  10\%, 30\%\}$. Solid lines show average performance, and dashed lines show worst and best performance, over 10 runs (per algorithm). $K=16$ MPI processes.}
\end{figure}

\begin{figure}[H]
\centering

\includegraphics[width=\textwidth]{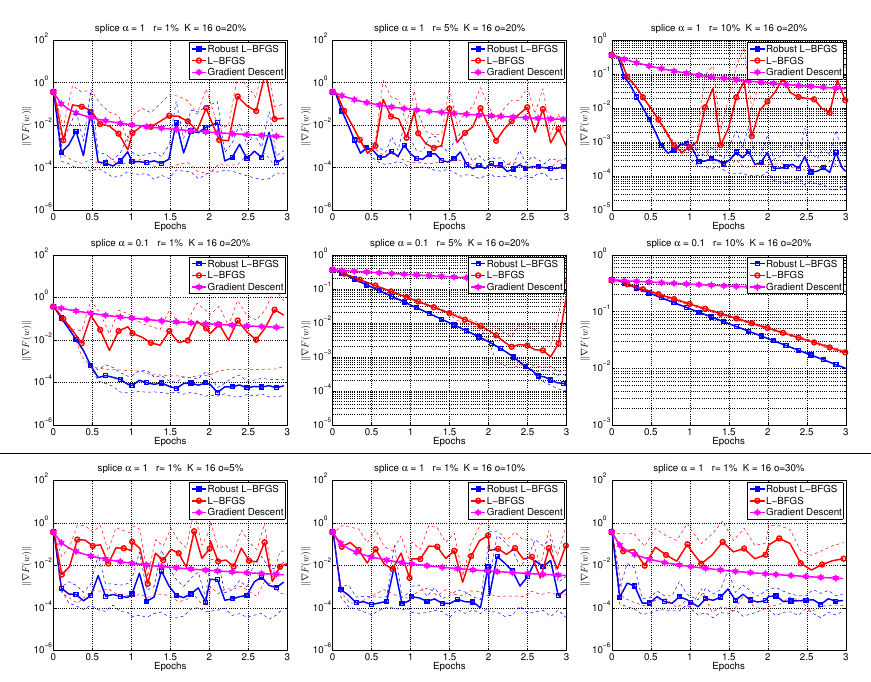}

\caption{\textbf{splice-cite dataset}. Comparison of Robust L-BFGS, L-BFGS (multi-batch L-BFGS without enforcing sample consistency), Gradient Descent (multi-batch Gradient method) and SGD. Top part:
we used $\alpha \in \{1, 0.1\}$,
$r\in \{1\%,  5\%,  10\%\}$ and $o=20\%$.
Bottom part: we used $\alpha=1$, $r=1\%$ and
$o\in \{5\%,  10\%, 30\%\}$. Solid lines show average performance, and dashed lines show worst and best performance, over 10 runs (per algorithm). $K=16$ MPI processes. (No Serial SGD experiments due to memory limitations of our cluster.)
}
\label{fig:splice}
\end{figure}

\newpage

\subsection{Extended Numerical Results - Fault-Tolerant Setting}
\label{sec:extnumres_ft}

If we run a distributed algorithm, for example on a shared computer cluster, then we may experience  delays. Such delays can be caused by other processes running on the same compute node, node failures and/or for other reasons. As a result, given a computational (time) budget, these delays may cause nodes to fail to return a value. To illustrate this behavior, and to motivate the robust fault-tolerant L-BFGS method, we run a simple benchmark MPI code on two different environments:
\begin{itemize}
\item {\bf Amazon EC2} -- Amazon EC2 is a cloud system provided by Amazon. It is expected that if load balancing is done properly, the execution time will have small noise; however, the network and communication can still be an issue. (4 MPI processes)
\item {\bf Shared Cluster} -- On our shared cluster, multiple jobs run on each node, with some jobs being more demanding than others. Even though each node has 16 cores, the amount of resources each job can utilize changes over time. In terms of communication, we have a GigaBit network. (11 MPI processes, running on 11 nodes)
\end{itemize}
We run a simple code on the cloud/cluster, with MPI communication. We generate two matrices $A,B \in  R^{n \times n}$, then  synchronize all MPI processes and compute $C=A\cdot B$ using the GSL C-BLAS library. The time is measured and recorded as computational time. After the matrix product is computed, the result is sent to the master/root node using asynchronous communication, and the time required is recorded. The process is repeated 3000 times. 

\begin{figure}[h!]
\centering

\includegraphics[width=0.65\textwidth]{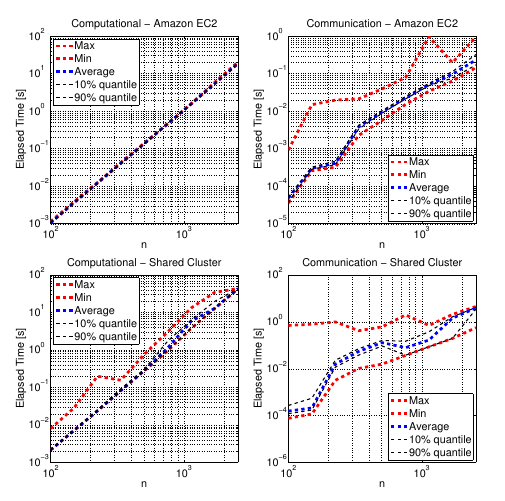}

\caption{Distribution of Computation and Communication Time for  Amazon EC2 and Shared Cluster.
Figures show worst and best time, average time and 10\% and 90\% quantiles.
Amazon Cloud EC: 4 MPI processes; Shared Cluster: 11 MPI processes.}
\label{fig.motivation}

\end{figure}

The results of the experiment described above are captured in Figure \ref{fig.motivation}. As expected, on the Amazon EC2 cloud, the matrix-matrix multiplication takes roughly the same time for all replications and the noise in communication is relatively small. In this example the cost of communication is negligible when compared to the cost of computation. On our shared cluster, one cannot guarantee that all resources are exclusively used for a specific process, and thus, the computation and communication time is considerably more stochastic and unbalanced. In some cases, the difference between the minimum and maximum computation and communication times vary by an order of magnitude or more. Hence, on such a platform a fault-tolerant algorithm that only uses information from nodes that return an update within a preallocated budget is a natural choice.

In Figures  \ref{fig:ft:rcv}-\ref{fig:ft:kddb} we present a comparison of the proposed robust multi-batch L-BFGS method and the multi-batch L-BFGS method that does not enforce sample consistency (L-BFGS). In these experiments, $p$ denotes the probability that a single node (MPI process) will not return a gradient evaluated on local data within a given time budget. We illustrate the performance of the methods for $\alpha=0.1$ and $p\in \{0.1, 0.2, 0.3, 0.4, 0.5\}$. We observe that the robust implementation is not affected much by the failure probability $p$.

\begin{figure}[h!]
\centering

\includegraphics[width=\textwidth]{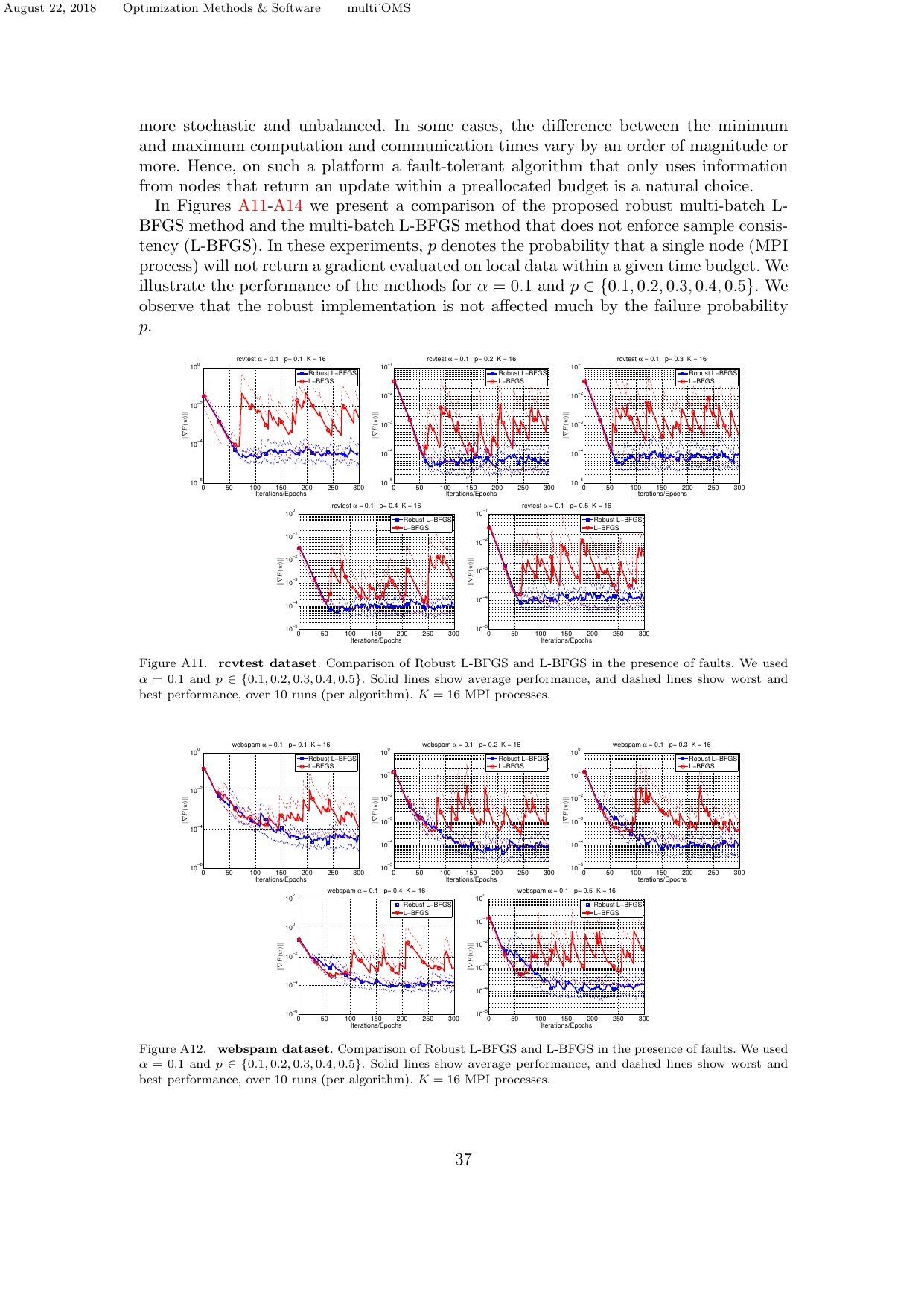}

\caption{\textbf{rcvtest dataset}. Comparison of Robust L-BFGS and L-BFGS in the presence of faults.
We used $\alpha=0.1$ and $p\in \{0.1, 0.2, 0.3, 0.4, 0.5\}$. Solid lines show average performance, and dashed lines show worst and best performance, over 10 runs (per algorithm). $K=16$ MPI processes.}
\label{fig:ft:rcv}
\end{figure}

\begin{figure}[h!]
\centering

\includegraphics[width=\textwidth]{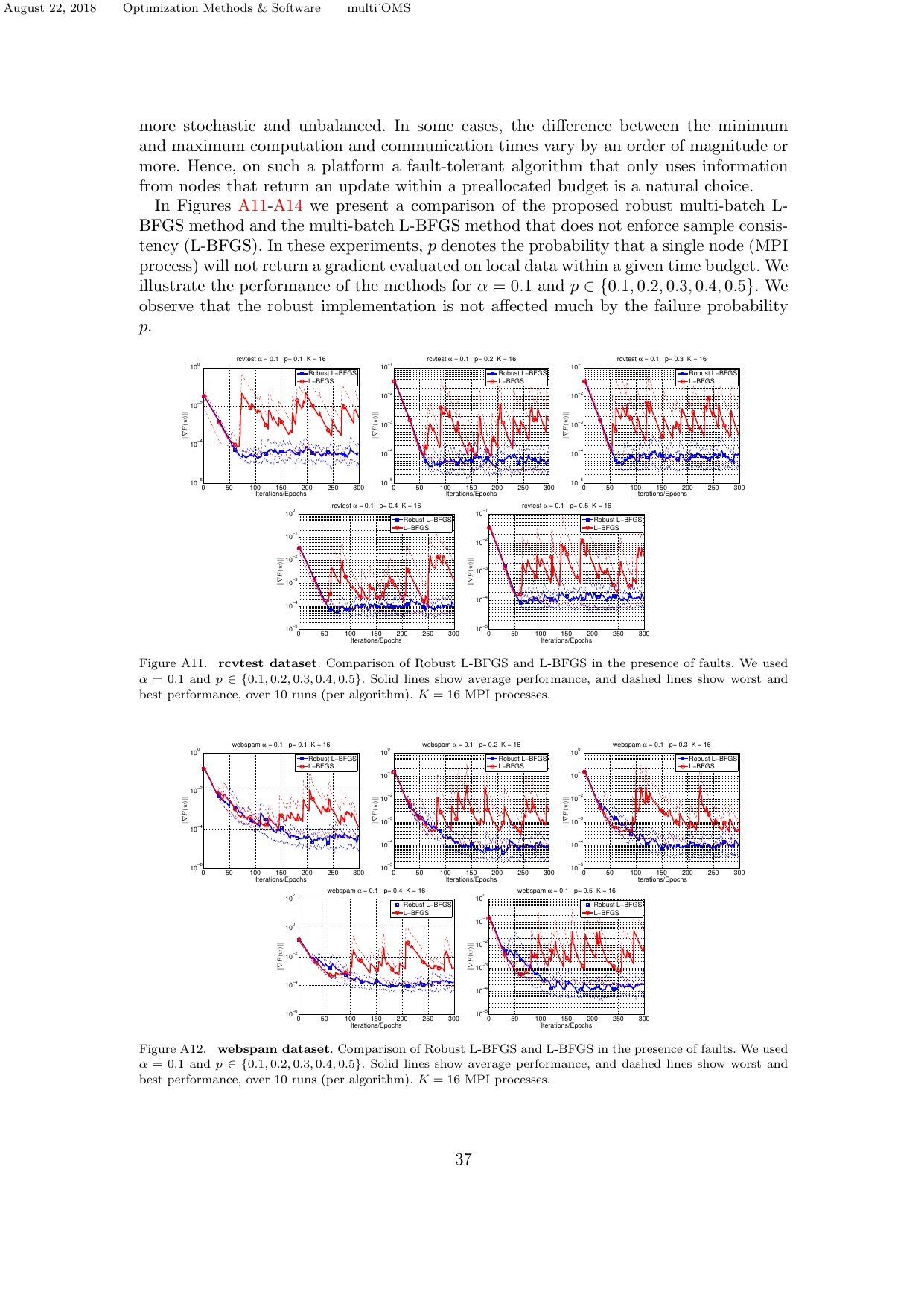}

\caption{\textbf{webspam dataset}. Comparison of Robust L-BFGS and L-BFGS in the presence of faults.
We used $\alpha=0.1$ and $p\in \{0.1, 0.2, 0.3, 0.4, 0.5\}$. Solid lines show average performance, and dashed lines show worst and best performance, over 10 runs (per algorithm). $K=16$ MPI processes.
}\label{fig:ft:webspam}
\end{figure}

\begin{figure}[h!]
\centering

\includegraphics[width=\textwidth]{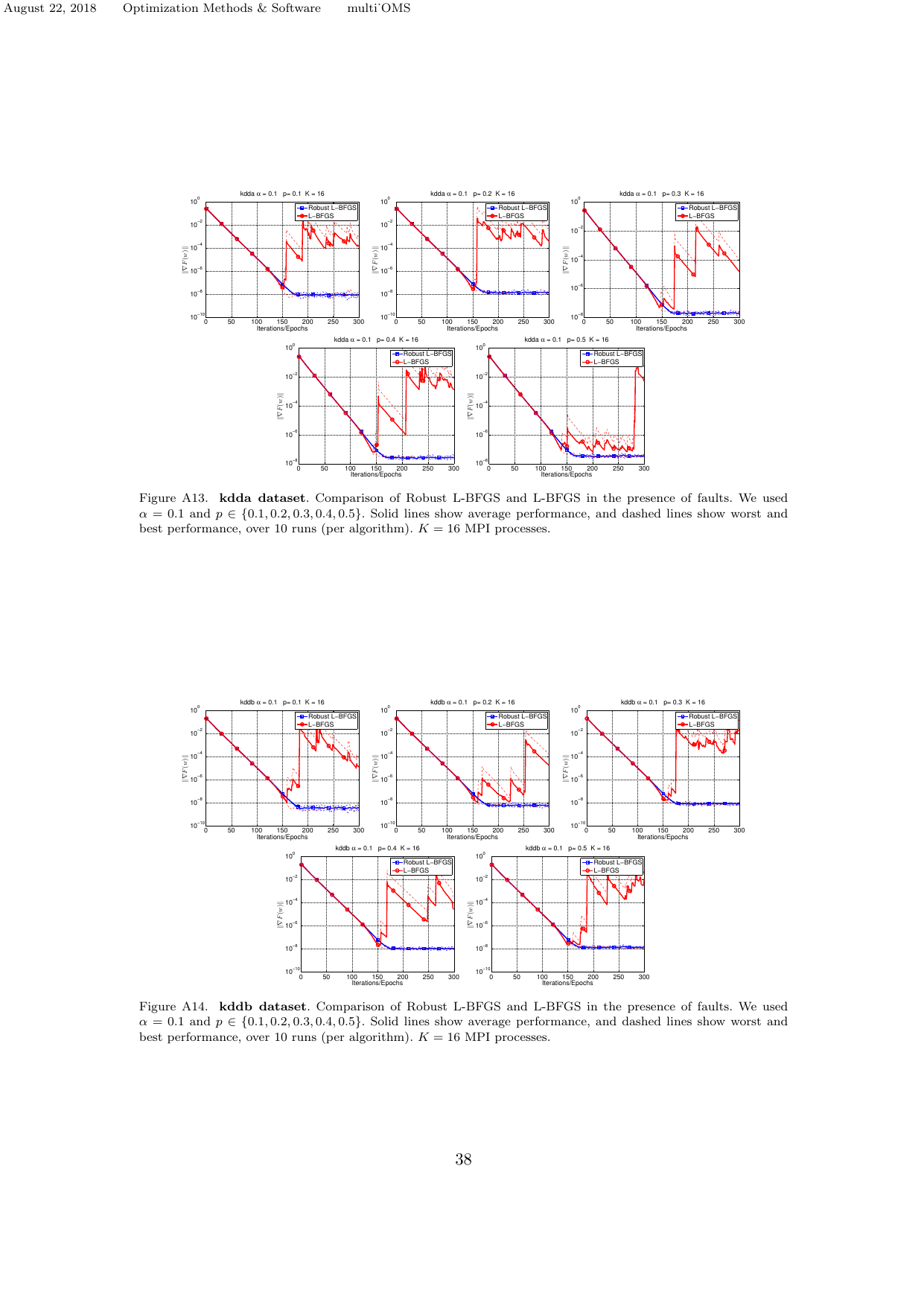}

\caption{\textbf{kdda dataset}. Comparison of Robust L-BFGS and L-BFGS in the presence of faults.
We used $\alpha=0.1$ and $p\in \{0.1, 0.2, 0.3, 0.4, 0.5\}$. Solid lines show average performance, and dashed lines show worst and best performance, over 10 runs (per algorithm). $K=16$ MPI processes.
}\label{fig:ft:kdda}
\end{figure}

\begin{figure}[h!]
\centering

\includegraphics[width=\textwidth]{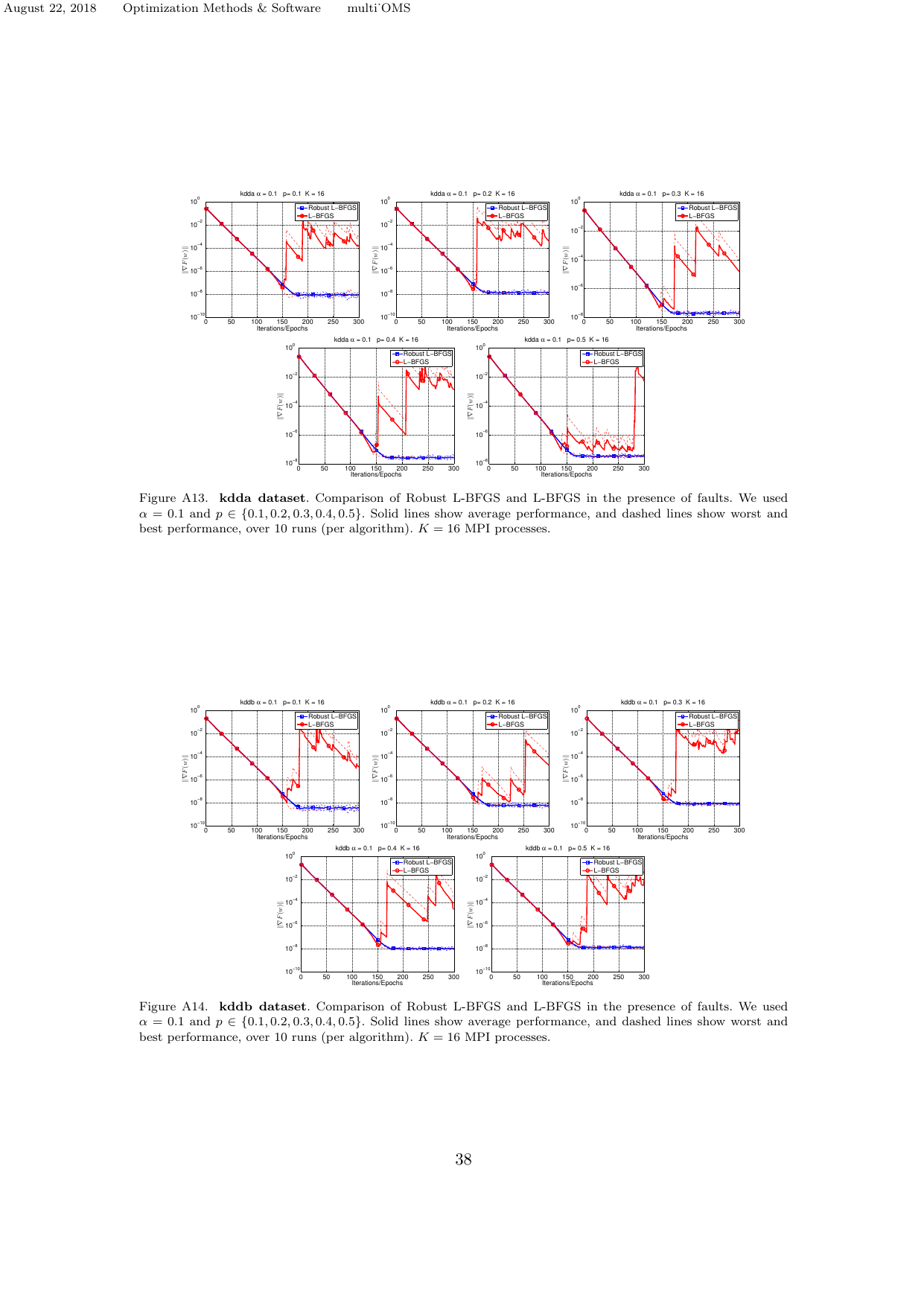}

\caption{\textbf{kddb dataset}. Comparison of Robust L-BFGS and L-BFGS in the presence of faults.
We used $\alpha=0.1$ and $p\in \{0.1, 0.2, 0.3, 0.4, 0.5\}$. Solid lines show average performance, and dashed lines show worst and best performance, over 10 runs (per algorithm). $K=16$ MPI processes.
}\label{fig:ft:kddb}
\end{figure}

\begin{figure}[h!]
\centering

\includegraphics[width=\textwidth]{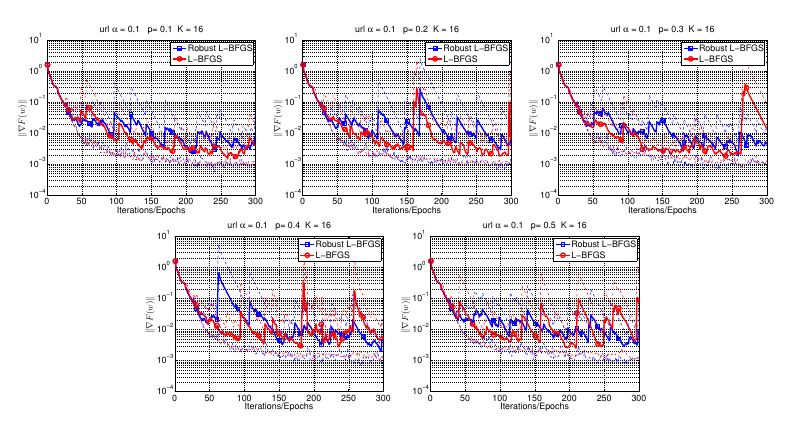}

\caption{\textbf{url dataset}. Comparison of Robust L-BFGS and L-BFGS in the presence of faults.
We used $\alpha=0.1$ and $p\in \{0.1, 0.2, 0.3, 0.4, 0.5\}$. Solid lines show average performance, and dashed lines show worst and best performance, over 10 runs (per algorithm). $K=16$ MPI processes.
}\label{fig:ft:url}
\end{figure}

\clearpage

\section{Extended Numerical Results - Neural Networks}
\label{sec:extnumres_nn}

In this section, we present complete numerical results for the Neural Network training tasks on the MNIST\footnote{Available at: \url{http://yann.lecun.com/exdb/mnist/}.} and CIFAR10/CIFAR100\footnote{Available at: \url{https://www.cs.toronto.edu/~kriz/cifar.html}.} datasets. More specifically, we show the training and testing accuracy of the methods for all batch sizes ($| S | \in \{50, 100, 200, 500, 1000, 2000, 4000 \}$). The tasks are summarized in Table \ref{tbl:NN_app}, and the details are explained below.

\begin{table}[h!]

\caption{ Structure of Neural Networks.}
\small
\vskip5pt
\label{tbl:NN_app}
\centering
\begin{tabular}{lccrc}
\toprule
\textbf{Network} &
 \textbf{Type} &
 \textbf{\# of layers} & 
  \textbf{$d$} & \textbf{Ref.}  \\  \midrule

\textbf{MNIST MLC} & fully connected (FC)& 1& 7.8k & \cite{lecun1998gradient}
 \\ \hdashline
\textbf{MNIST DNN (SoftPlus)} & conv+FC  & 4& 1.1M &  \cite{PyTorchExamples}
\\ \hdashline
\textbf{MNIST DNN (ReLU)} & conv+FC & 4& 1.1M & \cite{PyTorchExamples} 
 \\ \hdashline
\textbf{CIFAR10 LeNet} & conv+FC & 5& 62.0k &  \cite{lecun1998gradient}
\\ \hdashline
\textbf{CIFAR10 VGG11} &
conv+batchNorm+FC
 & 29& 9.2M & \cite{simonyan2014very}
\\ \hdashline
\textbf{CIFAR100 VGG11} & conv+batchNorm+FC& 29& 9.2M & \cite{simonyan2014very}
\\
 \bottomrule
{\small MLC =  Multiclass Linear Classifier}
\end{tabular}
\end{table}

\begin{itemize}
	\item \textbf{MNIST Convex:} This problem was a convex problem. It is a simple neural network with no hidden layers and soft-max cross-entropy loss function.
	\item \textbf{MNIST DNN (SoftPlus):}
	This network consists of two convolutional layers and two fully connected layer. The first convolutional layer has $32$ filters for each $5 \times 5$ patch, and the second convolutional layer has $64$ filters for each $5 \times 5$ patch. Every convolutional layer is followed by a $2 \times 2 $ max pooling layer. The first fully connected layer has 1024 neurons and the second fully connected layer produces a 10 dimensional output. We used SoftPlus activation functions.	
	The loss function is soft-max cross-entropy loss.
	\item \textbf{MNIST DNN (ReLU):}
	Same as above, but with ReLU activation functions.
	\item \textbf{CIFAR10 LeNet:}
	This network consists of two convolutional layers and three fully connected layer. The first convolutional layer has $6$ filters for each $5 \times 5$ patch, and the second convolutional layer has $16$ filters for each $5 \times 5$ patch. Every convolutional layer is followed by a $2 \times 2 $ max pooling layer. The first fully connected layer has 120 neurons and the second fully connected layer
has 84 neurons. The last fully connected layer produces a 10 dimensional output. The activation functions used is ReLU  and the loss function used is soft-max cross-entropy loss. 
	\item \textbf{CIFAR10 VGG11 and CIFAR100 VGG11:}
This is standard VGG11 network which contains 8 convolution layers, each followed by batch-normalization and ReLU activation functions. There are also 5 max-pooling layers and one average pooling layer. The output of desired dimension (10 or 100) is achieve by fully connected layer.  	
	The loss function is soft-max cross-entropy loss. 
\end{itemize}
 
For the experiments in this section (Figures \ref{fig:nn:app1} and \ref{fig:nn:metric2}), we ran the following methods:
\begin{itemize}
	\item (LFBGS) multi-batch L-BFGS \textcolor{blue}{with the cautious strategy}: Algorithm \ref{alg:multi} with standard initial scaling ($\gamma_k I$, where $\gamma_k = \frac{s_{k-1}^T y_{k-1}}{y_{k-1}^T y_{k-1}}$) or $\alpha I$ initial scaling,
	\item (Adam) \cite{kingma2014adam},
	\item (SGD) \cite{robbins1951stochastic}.
\end{itemize}

Figures \ref{fig:nn:app1} and \ref{fig:nn:app6}  
show the evolution of training and testing accuracy for different batch sizes for the different neural network training tasks. Figures \ref{fig:nn:metric1} and \ref{fig:nn:metric2} show the two diagnostic metrics for different batch sizes.

\begin{figure}[H]
\centering

\includegraphics[width=\textwidth]{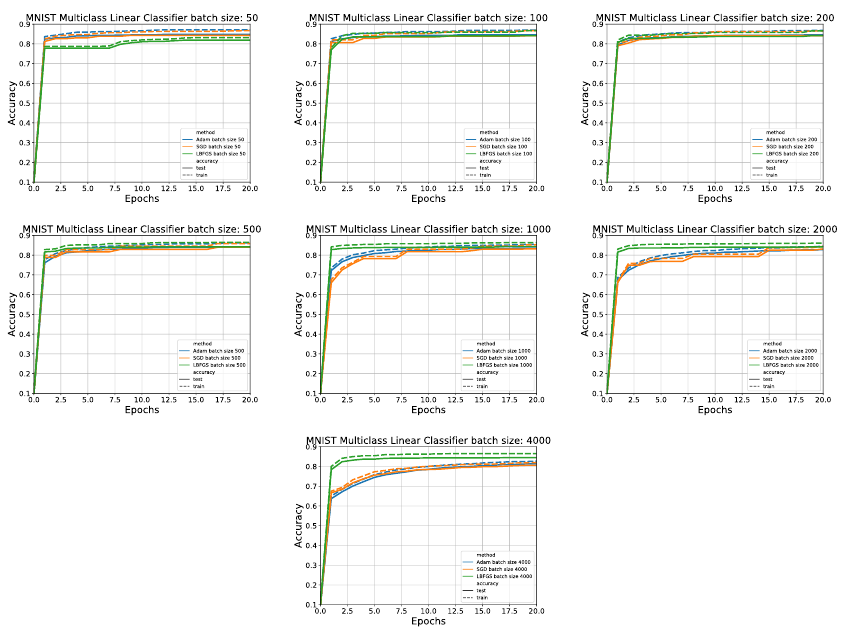}

\caption{\textbf{MNIST Multiclass Linear Classifier.} Comparison of multi-batch L-BFGS (LBFGS), Adam and SGD. Evolution of the training and testing accuracy for different batch sizes ($| S | \in \{50, 100, 200, 500, 1000, 2000, 4000 \}$). \label{fig:nn:app1}}
\end{figure}

\begin{figure}
\centering

\includegraphics[width=\textwidth]{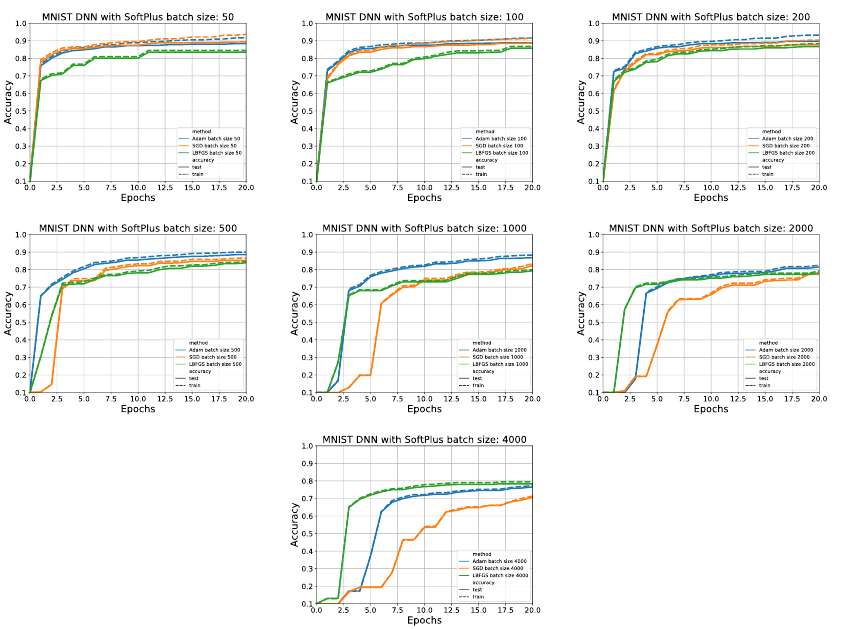}

\caption{\textbf{MNIST DNN (SoftPlus).} Comparison of multi-batch L-BFGS (LBFGS), Adam and SGD. Evolution of the training and testing accuracy for different batch sizes ($| S | \in \{50, 100, 200, 500, 1000, 2000, 4000 \}$). \label{fig:nn:app2}}
\end{figure}

\begin{figure}
\centering

\includegraphics[width=\textwidth]{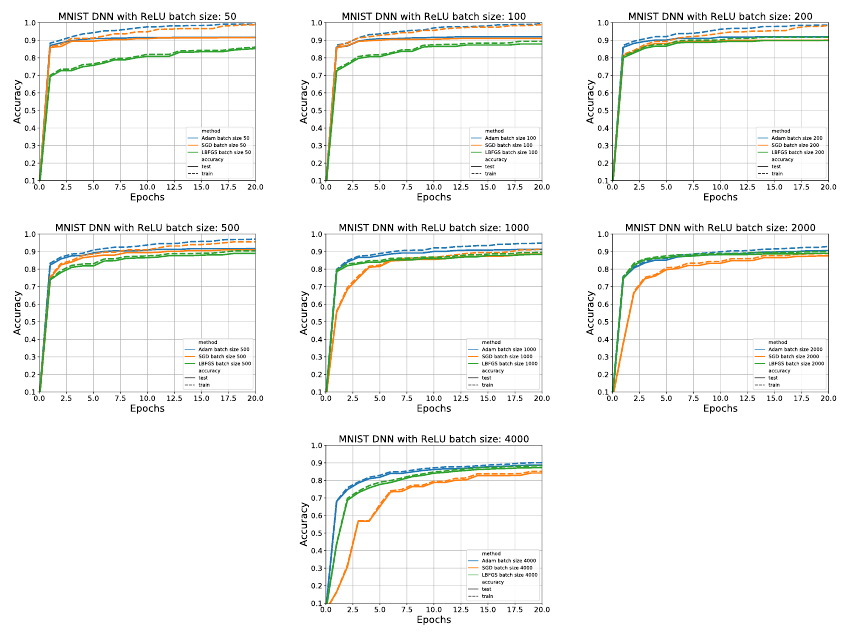}

\caption{\textbf{MNIST DNN (ReLU).} Comparison of multi-batch L-BFGS (LBFGS), Adam and SGD. Evolution of the training and testing accuracy for different batch sizes ($| S | \in \{50, 100, 200, 500, 1000, 2000, 4000 \}$). \label{fig:nn:app3}}
\end{figure}

\begin{figure}
\centering

\includegraphics[width=\textwidth]{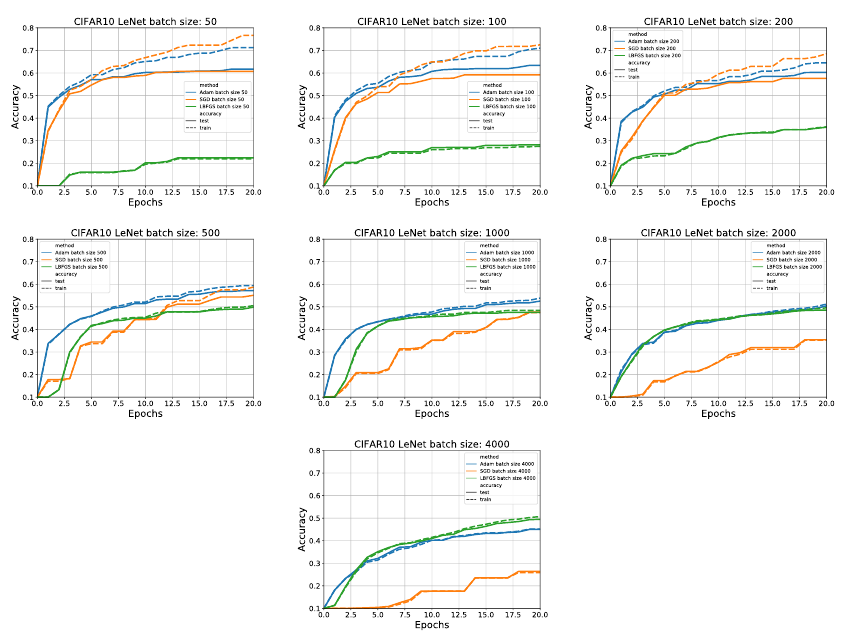}

\caption{\textbf{CIFAR10 LeNet.} Comparison of multi-batch L-BFGS (LBFGS), Adam and SGD. Evolution of the training and testing accuracy for different batch sizes ($| S | \in \{50, 100, 200, 500, 1000, 2000, 4000 \}$). \label{fig:nn:app4}}
\end{figure}

\begin{figure}
\centering

\includegraphics[width=\textwidth]{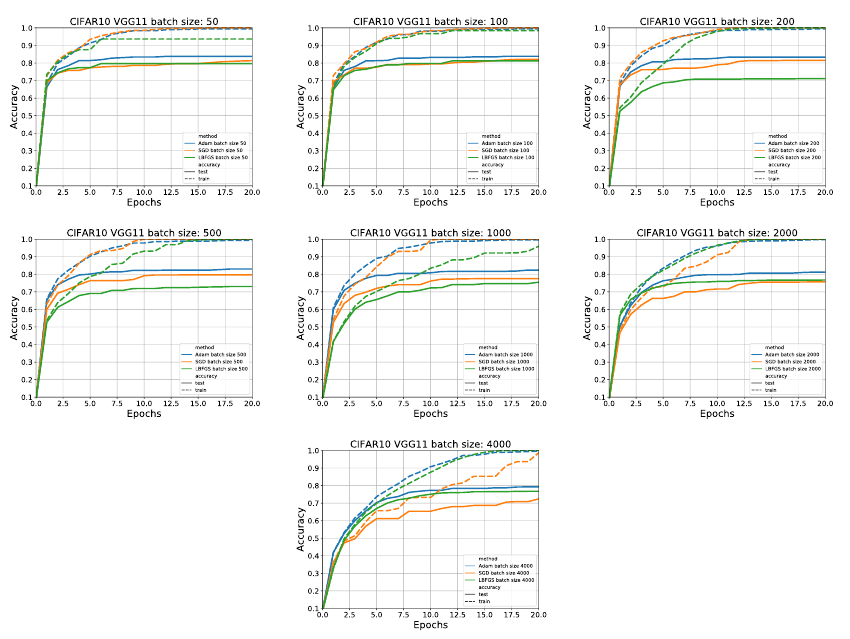}

\caption{\textbf{CIFAR10 VGG11.} Comparison of multi-batch L-BFGS (LBFGS), Adam and SGD. Evolution of the training and testing accuracy for different batch sizes ($| S | \in \{50, 100, 200, 500, 1000, 2000, 4000 \}$). \label{fig:nn:app5}}
\end{figure}

\begin{figure}
\centering

\includegraphics[width=\textwidth]{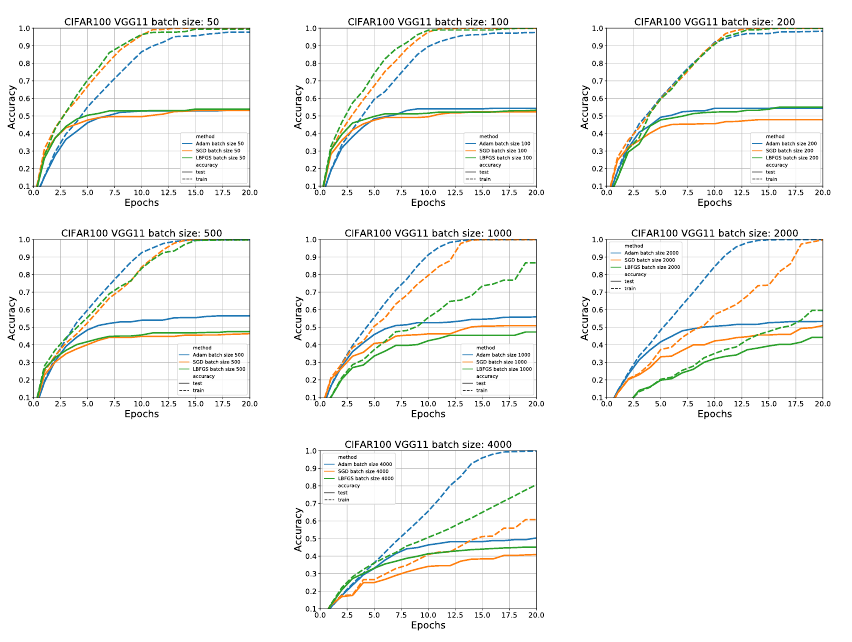}

\caption{\textbf{CIFAR100 VGG11.} Comparison of multi-batch L-BFGS (LBFGS), Adam and SGD. Evolution of the training and testing accuracy for different batch sizes ($| S | \in \{50, 100, 200, 500, 1000, 2000, 4000 \}$). \label{fig:nn:app6}}
\end{figure}

\clearpage

\begin{figure}
\centering

\includegraphics[width=\textwidth]{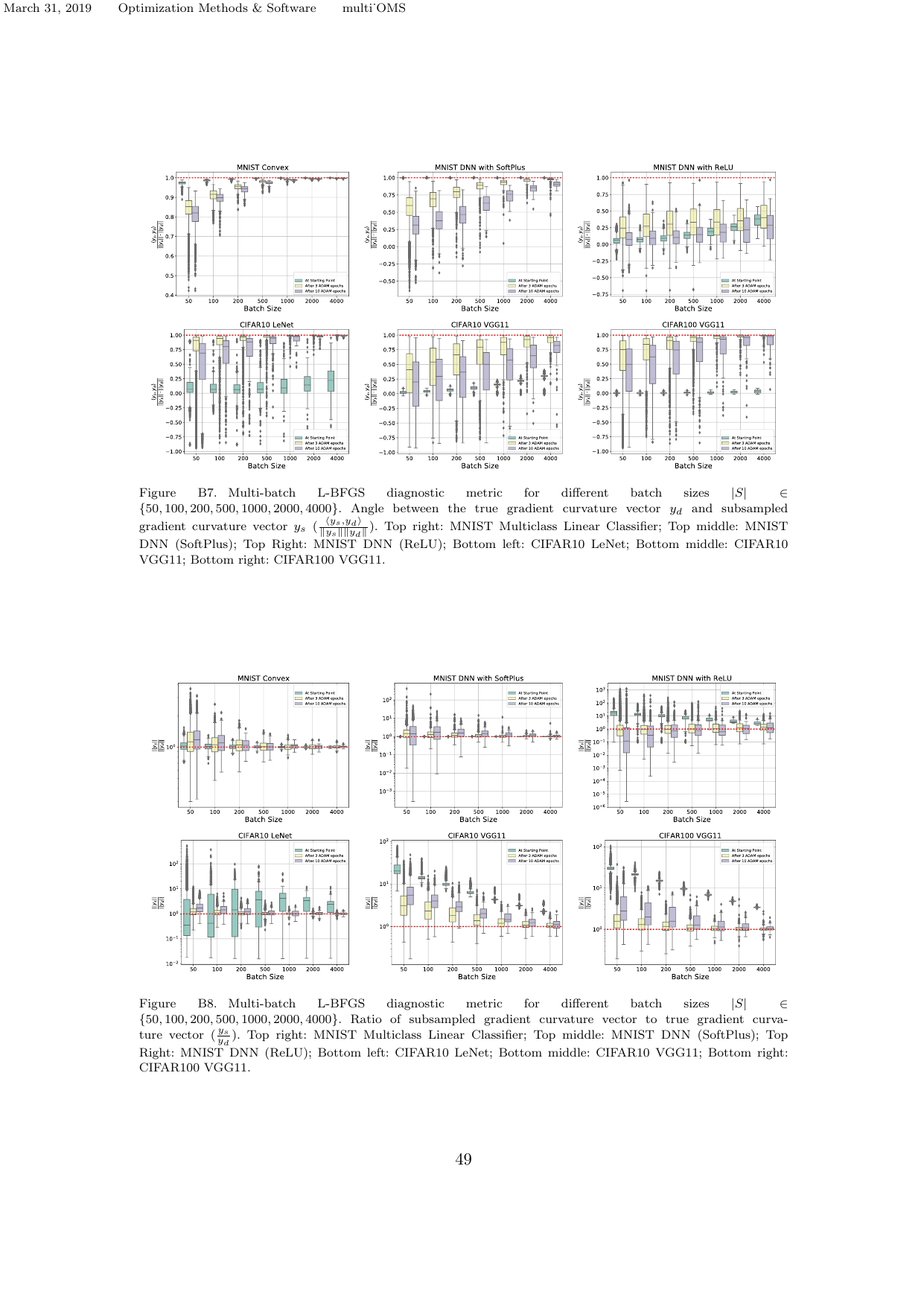}

\caption{Multi-batch L-BFGS diagnostic metric for different batch sizes $| S | \in \{50, 100, 200, 500, 1000, 2000, 4000 \}$. Angle between the true gradient curvature vector $y_d$ and subsampled gradient curvature vector $y_s$ ($\frac{\langle y_s,y_d \rangle}{\| y_s\| \| y_d \|}$). Top right: MNIST Multiclass Linear Classifier; Top middle: MNIST DNN (SoftPlus); Top Right: MNIST DNN (ReLU); Bottom left: CIFAR10 LeNet; Bottom middle: CIFAR10 VGG11; Bottom right: CIFAR100 VGG11. \label{fig:nn:metric1}}
\end{figure}

\begin{figure}
\centering

\includegraphics[width=\textwidth]{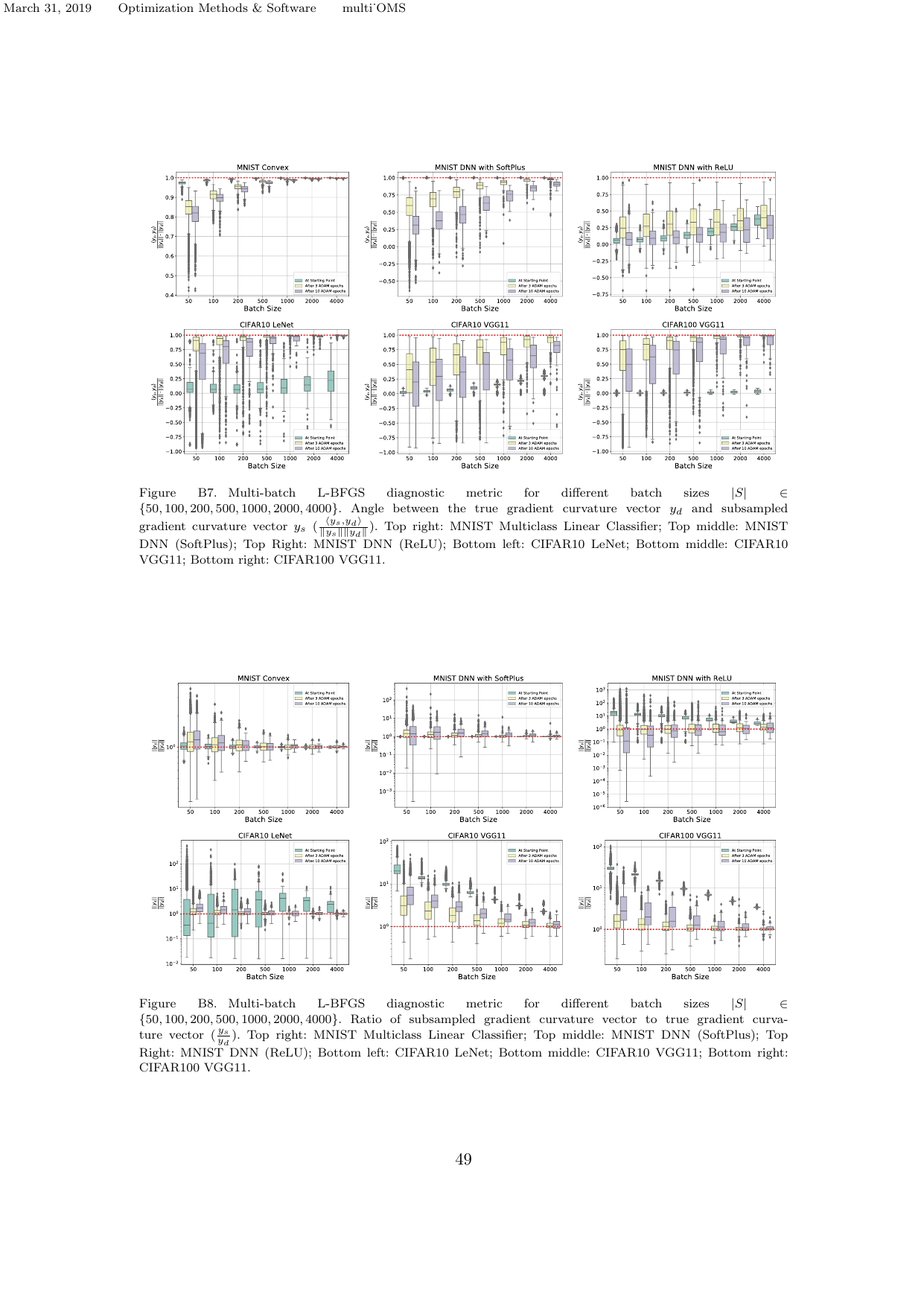}

\caption{Multi-batch L-BFGS diagnostic metric for different batch sizes $| S | \in \{50, 100, 200, 500, 1000, 2000, 4000 \}$. Ratio of subsampled gradient curvature vector to true gradient curvature vector ($\frac{y_s}{y_d}$). Top right: MNIST Multiclass Linear Classifier; Top middle: MNIST DNN (SoftPlus); Top Right: MNIST DNN (ReLU); Bottom left: CIFAR10 LeNet; Bottom middle: CIFAR10 VGG11; Bottom right: CIFAR100 VGG11.  \label{fig:nn:metric2}}
\end{figure}

\end{document}